%

\RequirePackage{etoolbox}
\csdef{input@path}{{style/}{graphics/}}
\documentclass[aap,MSNbibl,seceqn,dvips]{arximspdf}
\makeatletter
   \@ifpackageloaded{graphicx}{}{\usepackage{graphicx}}
\makeatother

%

\doi{10.1214/14-AAP1021} 
\volume{25}
\issue{3}
\pubyear{2015}
\firstpage{1232}
\lastpage{1278}

\makeatletter
\newcommand{\widebar}{\overline}
\newcommand{\rrvert}{\vert}
\newcommand{\llvert}{\vert}
\newtheorem{thma}{Theorem}[section]
\newtheorem{lemma}[thma]{Lemma}
\newtheorem{prop}[thma]{Propostion}
\newtheorem{claim}{Claim}
\newproclaim{remark}{Remark}
\newcommand{\Ai}{\mathrm{Ai}}
\newcommand{\oAi}{\operatorname{Ai}}
\makeatother

\begin{document}
\begin{frontmatter}

\title{Asymptotic domino statistics in the Aztec diamond}
\runtitle{Asymptotics in the Aztec diamond}

\begin{aug}
\author[A]{\fnms{Sunil}~\snm{Chhita}\corref{}\thanksref{T1}\ead[label=e1]{schhita@iam.uni-bonn.de}\ead[label=e11]{chhita@kth.se}},
\author[B]{\fnms{Kurt}~\snm{Johansson}\ead[label=e2]{kurtj@kth.se}\thanksref{T1,T2}}
\and
\author[C]{\fnms{Benjamin}~\snm{Young}\thanksref{T1}\ead[label=e3]{bjy@uoregon.edu}}
\runauthor{S. Chhita, K. Johansson and B. Young}
\affiliation{University of Bonn, Royal Institute of
Technology (KTH)\\ and University of Oregon}
\address[A]{S. Chhita\\
Institute for Applied Mathematics\\
University of Bonn\\
Endenicher Allee 60\\
D-53115 Bonn\\
Germany\\
\printead{e1}\\
\phantom{E-mail: }\printead*{e11}}
\address[B]{K. Johansson\\
Department of Mathematics\\
Royal Institute of Technology (KTH)\\
SE-100 44 Stockholm\\
Sweden\\
\printead{e2}} 
\address[C]{B. Young\\
Department of Mathematics\\
University of Oregon\\
Eugene, Oregon 97403\\
USA\\
\printead{e3}}
\end{aug}
\thankstext{T1}{Supported by the Knut and Alice Wallenberg Foundation
Grant KAW:2010.0063.}
\thankstext{T2}{Supported by the Swedish Research Council (VR).} 

\received{\smonth{1} \syear{2013}}
\revised{\smonth{1} \syear{2014}}

%
\begin{abstract}
We study random domino tilings of the Aztec diamond with different
weights for horizontal and vertical dominoes. A domino tiling of an
Aztec diamond can also be described by a particle system which is a
determinantal process. We give a relation between the correlation
kernel for this process and the inverse Kasteleyn matrix of the Aztec
diamond. This gives a formula for the inverse Kasteleyn matrix which
generalizes a result of Helfgott. As an application, we investigate the
asymptotics of the process formed by the southern dominoes close to the
frozen boundary. We find that at the northern boundary, the southern
domino process converges to a thinned Airy point process. At the
southern boundary, the process of holes of the southern domino process
converges to a multiple point process that we call the thickened Airy
point process. We also study the convergence of the domino process in
the unfrozen region to the limiting Gibbs measure.
\end{abstract}

%
\begin{keyword}[class=AMS]
\kwd[Primary ]{60G55}
\kwd[; secondary ]{60C05}
\end{keyword}
\begin{keyword}
\kwd{Aztec diamond}
\kwd{domino tiling}
\kwd{dimer covering}
\kwd{determinantal point process}
\end{keyword}
\end{frontmatter}

\section{Introduction}\label{sec1}\label{secIntroduction}

The \emph{Aztec Diamond of order $n$} is a planar region which can be
completely tiled with \emph{dominoes}, two-by-one rectangles. Over the
past twenty years, this particular shape has come to occupy a central
place in the literature of domino tilings of plane regions. Tilings of
large Aztec diamonds exhibit striking features---the main one being
that these tilings exhibit a limit shape, described by the so-called
Arctic circle theorem~\cite{JPS98}.
See Figure~\ref{figprettypicturesofaztecdiamonds} for pictures of
tilings of a relatively large Aztec diamond.

%
\begin{figure}

\includegraphics{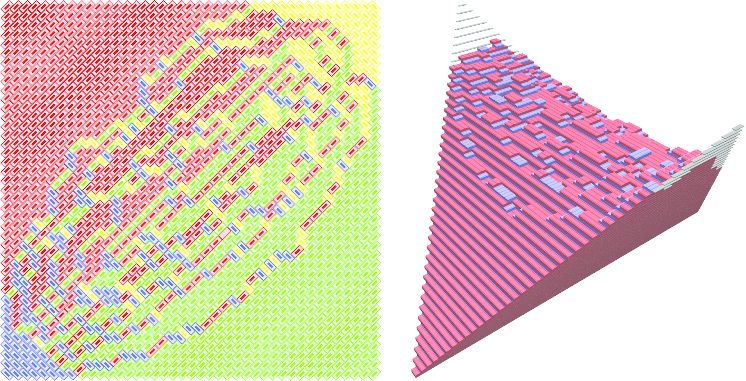}

\caption{Left:\vspace*{1pt} a tiling of an Aztec diamond of order $41$ with $a =
\frac{1}{2}$, and the corresponding dimer cover. The green dominoes
along each horizontal row give the southern domino process. Right: the
height function associated to this tiling (realized as a pile of
Levitov blocks \cite{Lev89,Lev90}).}\label{figprettypicturesofaztecdiamonds}
\end{figure}

There are several alternate descriptions of a tiling of an Aztec
diamond. A~domino tiling is equivalent to a perfect matching, or \emph
{dimer cover}, on the dual graph of the region which is tiled. There is
an equivalent family of nonintersecting lattice paths, called DR
paths~\cite{LRS01}, and there is a description as a stack of a certain
sort of blocks, called \emph{Levitov blocks} \cite{Lev89,Lev90};
see~Figure~\ref{figprettypicturesofaztecdiamonds}.
There is also a well-studied \emph{interlacing particle process} which
is equivalent\vspace*{1pt} to the tiling model in a certain sense, but this
equivalence is not bijective: there are $2^{n(n+1)/2}$ tilings of the
Aztec diamond, whereas the number of configurations of the particle
process are equinumerous with order-$n$ \emph{alternating sign
matrices} or configurations of the \emph{six-vertex model}. However,
the correspondence is \emph{weight preserving} and \emph{locally
defined}; it maps certain collection of tilings to a configuration of
the \emph{free-fermion} six-vertex model~\cite{EKLP92}, preserving
the relative weights.

It is this point process whose local asymptotics have been studied most
thoroughly~\cite{Joh05}; in particular, the boundary of the frozen
region is more easily described using the particles, since it is
related to the position of the last particles on different lines. There
is also a relationship between these particles and a certain sort of
\emph{zig-zag path} in the tiling (distinct from the zig-zag paths
studied in~\cite{Bro12}) which is helpful.
However, the many-to-one nature of the correspondence necessarily loses
some of the information about the original tilings.
In both the domino and particle pictures, we get determinantal point
processes, although the precise combinatorial relation between the two
processes is not immediate.

We correct this situation in this paper, by giving a formula for the
inverse Kasteleyn matrix for the Aztec diamond. This generalizes a
previous result by Helfgott~\cite{Hel00} to the case when the
horizontal and vertical dominoes have different weights. Using an
observation of Kenyon~\cite{Ken00}, it is then possible to compute
probabilities for various configurations of dominoes and their
asymptotic limits as the size of the Aztec diamond increases. In
particular, we find the behavior of the boundary of the frozen region
when we can only see one type of domino.


\subsection{The southern domino process: North boundary}\label{sec1.1}
There are in fact four different types of dominoes in a tiling: the
dominoes can be placed in two orientations, each of which comes in two
different parities (determined by the bipartition of the dual graph on
which the dominoes are placed). Due to the Arctic circle theorem, with
probability one, there are only dominoes of one of these four types
clustered near each of the four corners of the Aztec diamond. For this
reason (and others, see~\cite{EKLP92}) we call the four types of
dominoes \emph{north}, \emph{south}, \emph{east and west}, which are colored red,
green, yellow and blue in Figure~\ref{figprettypicturesofaztecdiamonds} in the electronic version of this article.

For the moment, ignore all but the southern dominoes (the green ones in
Figure~\ref{figprettypicturesofaztecdiamonds}). Viewing each
domino as a
\emph{point}, the set of all southern dominoes form a determinantal point
process. Note that the positions of the southern dominoes do not
specify the
tiling uniquely so they only give a partial description of the tiling (though,
together with any of the other types of dominoes, they do). We analyze the
distribution of the southern dominoes along a diagonal line in a large Aztec
diamond, scaled so as to study the two intersections between this line
and the
frozen boundary of the tiling (an ``arctic ellipse,'' in the weighted case).
It would be possible to extend what we have done to analyze the joint
distribution of all southern dominoes in the Aztec diamond; as is, our analysis
extends previous results in \cite{CEP96} on placement
probabilities of single dominoes.

We will show that the appropriate scaling limit of the point
process of southern dominoes along a diagonal line close to the
boundary of the \emph{northern} frozen region is given by a thinned
Airy kernel point process, where the amount of thinning depends on the
relative weight of the horizontal and vertical dominoes.
This can be heuristically understood in the following way. The Airy
kernel point process, as mentioned earlier, is the edge limit of the
particle process along a diagonal line, and these in turn are given by
the intersections of the nonintersecting paths and the diagonal line.
Sometimes these intersections occur along a southern domino, and
sometimes not. If we only see the southern dominoes we only see some of
these intersections, and which of them we see is essentially random; so
we might expect, in the limit, a random thinning of the Airy kernel
point process. A priori it is not clear that the thinning becomes
independent in the limit, but this turns out to be the case.

\subsection{The southern domino process: Southern boundary}\label{sec1.2}
If, instead, we examine the southern frozen region, we find that almost
all of the dominoes are coming from the southern domino process and
thus lie in a frozen, brickwork pattern predominantly. The southern
boundary is a ``hole'' in this regular pattern. Consider the holes
between the
southern dominoes along a diagonal line in a neighborhood of the
southern boundary. These holes also form a determinantal point process, but
in a scaling limit it does not converge to a \emph{simple} point
process, but rather to a \emph{multiple point process} with
independent geometric probabilities
for the multiple points in an Airy kernel point process. This can be
seen in Figure~\ref{figprettypicturesofaztecdiamonds}: there is a
tendency for dominoes of like types to cluster together along the
southern boundary. This tendency continues even in the limit, with a
cluster of $k$ dominoes becoming a point of multiplicity $k$. The
multiplicity increases as we go toward the lower tangency point of the
arctic ellipse, a fact which can also be observed in Figure~\ref
{figprettypicturesofaztecdiamonds}.

\subsection{Previous work on domino tilings}\label{sec1.3}
\label{subsecPreviouswork}
Domino tilings on the Aztec diamond were originally introduced in~\cite
{EKLP92,EKLP92a} as a model connected with the \emph{alternating
sign matrices}. In this section, we give only a partial overview of the
literature on the asymptotics of domino tilings of the Aztec diamond.

The limit shape for random tilings of the Aztec diamond, the so-called
Arctic circle theorem, was first computed for $a=1$ in~\cite{JPS98},
where $a$ is the weight of each vertical domino. Since then, there have
been a variety of different and interesting approaches to compute the
limit shape which hold for general $a$~\cite
{CEP96,Joh05,Rom12,KO07}. The existence of limit shapes is not
limited to domino tilings; limit shapes also exist for random lozenge
tilings, for example, the boxed plane partition~\cite{CLP98}. These
examples provided a motivation for a theory of the existence of limit
shapes for general tiling models on bipartite graphs \cite{CKP01,KO07}.

The edge behavior, that is, the behavior between the frozen and
unfrozen regions has, been of particular interest\vspace*{1pt} to the random matrix
community, as the fluctuations are of size $n^{1/3}$. This is the same
size as the fluctuations of the largest eigenvalue of the \emph
{Gaussian unitary ensemble} (\emph{GUE}). Indeed, \cite{Joh02,Joh05}
showed that the law of the particles associated to the tiling is given
by the \emph{Airy process} and that the position of the last particle
is given by the Tracy Widom distribution, $F_2$; see, for example,~\cite{AGZ10}. Furthermore, Johansson and Nordenstam \cite{JN06}
showed that the distribution of these particles becomes the \emph{GUE
minor process} at the intersection of the liquid region and the
boundary of the Aztec diamond while Fleming and Forrester \cite{FF11}
obtained similar results for a certain half Aztec diamond.

There are a handful of other explicitly inverted Kasteleyn matrices in
the literature for domino tilings. The inverse of the Kasteleyn matrix
of the Aztec diamond was computed by Helfgott~\cite{Hel00} in the
case of the uniform measure on domino tilings; the results in~\cite
{CKP01,KOS06} rely on explicit inverses of four Kasteleyn-like
matrices that, together, count perfect matchings on a torus-embedded
graph. Finally, Kasteleyn~\cite{Kas61} and independently Temperley
and Fisher in~\cite{TF61} compute the eigenvalues and eigenvectors of
the Kasteleyn matrix of the $m \times n$ grid graph explicitly.

A proof of convergence to the Gaussian Free Field, following~\cite
{BF08} should be possible. In fact, an earlier preprint of this
paper (dated December 21, 2012 and posted on the arXiv) stated such a claim
as Theorem 2.9 and outlined a proof, skipping over many details. We
would like to retract this theorem and its outlined proof, for the
following reason: the details that we omitted (largely estimates on~$K^{-1}$) were numerous enough and technical enough that even a rather
dedicated reader would have been hard-pressed to supply them all.
Moreover, in revision, we found it impossible to include enough
details of these estimates while keeping the discussion brief. The
proof, if and when it appears in the literature, will have to be in
its own paper. Instead, we include in Section~\ref{sec6} only a list of the
estimates that would be needed in order to demonstrate convergence.
We sincerely thank the anonymous referee for bringing this error to
our attention.

\section{Results}\label{sec2}\label{secResults}

In this section, we give the results of our paper and the necessary
prerequisites to understand these results. There are three types of results:
\begin{longlist}[(3)]
\item[(1)] the inverse of the Kasteleyn matrix (Section~\ref{subsecTheinverseoftheKasteleynmatrix}),
\item[(2)] results on the southern domino process close to the edges
of the unfrozen region (Section~\ref{subsecEdgefluctuationsofsoutherndominoes}),
\item[(3)] local Gibbs measure (Section~\ref{subsecBulk}).
\end{longlist}

\subsection{The inverse of the Kasteleyn matrix}\label{sec2.1}
\label{subsecTheinverseoftheKasteleynmatrix}

\subsubsection{Definitions}\label{sec2.1.1}
In this paper, the Aztec diamond is rotated by $\pi/4$ counter
clockwise from the convention set in \cite{EKLP92}.
Because there are many possibilities for coordinate systems of Aztec
diamonds, we will refer to our coordinate system as the \emph
{Kasteleyn coordinates}.
In the Kasteleyn coordinates, an \emph{Aztec diamond of order} $n$
consists of squares with corners $(k-1,l)$, $(k,l-1)$, $(k+1,l)$ and
$(k,l+1)$ for either $k\bmod2=1$ and $l \bmod2=0$ with $1 \leq k \leq
2n-1$ and $0 \leq l \leq2n$, or $k \bmod2=0$ and $l \bmod2=1$ with
$0 \leq k \leq2n$ and $1 \leq l \leq2n-1$.
A \emph{domino} is a union of two adjacent squares which share an edge.
A \emph{domino tiling} of the Aztec diamond is any arrangement of
dominoes such that each square of the Aztec diamond is covered exactly
once by a domino.

The dual graph of the Aztec diamond (without its external face) is a
bipartite graph which has vertices $\mathtt{W} \cup\mathtt{B}$ where
%
\begin{eqnarray}
\mathtt{W} &=&\bigl\{(x_1,x_2)\dvtx  x_1\bmod2=1,x_2\bmod2=0,
\nonumber\\[-8pt]\\[-8pt]
&&\hspace*{44pt}  1 \leq x_1 \leq2n-1, 0 \leq x_2
\leq2n \bigr\}\nonumber
\end{eqnarray}
and
%
\begin{eqnarray}
\mathtt{B} &=& \bigl\{(x_1,x_2)\dvtx  x_1 \bmod2=0,
x_2 \bmod2=1,
\nonumber\\[-8pt]\\[-8pt]
&&\hspace*{44pt} 0 \leq x_1 \leq2n, 1 \leq x_2
\leq2n-1 \bigr\},\nonumber
\end{eqnarray}
which correspond to the white and black vertices, respectively, written
in terms of the Kasteleyn coordinates.
To distinguish between the primal and dual graphs, we will refer to the
dual graph of the Aztec diamond as the \emph{Aztec diamond graph}. We
shall also set $e_1=(1,1)$ and $e_2=(-1,1)$.
The edge set of the Aztec diamond graph consists of all the edges
$(x,y)$ with $y-x \pm e_i$ for $i \in\{1,2\}$ for $x \in\mathtt{W}$
and $y \in\mathtt{B}$.

A domino on the dual graph is an edge which is called a \emph{dimer}.
A domino tiling on the dual graph is a subset of edges such that each
vertex is incident to exactly one edge.
This collection of edges is called a \emph{dimer covering}.
Domino tilings of the Aztec diamond are equivalent to dimer coverings
of the Aztec diamond graph.
See Figure~\ref{figdominoesdimers} for the Aztec diamond graph with
its coordinates and an example of a dimer covering.

For $b \in\mathtt{B}$ and $w \in\mathtt{W}$, we say that a dimer
$(b,w)$ is:
\begin{itemize}
\item a \emph{north} dimer if $w=b+e_1$,
\item an \emph{east} dimer if $w=b+e_2$,
\item a \emph{south} dimer if $w=b-e_1$,
\item a \emph{west} dimer if $w=b-e_2$.
\end{itemize}
There is a corresponding notion for dominoes, and this terminology
agrees with that introduced in~\cite{JPS98}. We will interchange
between dominoes and dimers.

%
\begin{figure}

\includegraphics{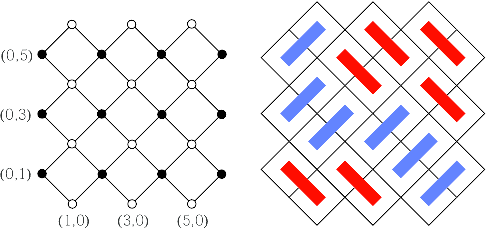}

\caption{The figure on the right shows the coordinates of the Aztec
diamond graph with the white and black vertices drawn in.
The figure on the right shows an Aztec diamond of size 3 with a dimer
covering of the dual graph. The domino tiling can be seen by placing
dominoes over the dimers.}\label{figdominoesdimers}
\end{figure}

\subsubsection{Determinantal point processes}\label{sec2.1.2}

Determinantal point processes are a key part of the analysis used in
this paper. Here, we briefly describe these processes but more in-depth
treatises of determinantal point processes can be found in \cite
{Joh06} and \cite{SOS00}.

Let $\Lambda$ be a Polish space, and take $\mathcal{M}(\Lambda)$ to
denote the space of counting measures $\xi$ on $\Lambda$ with $\xi
(B)<\infty$ for every bounded $B \subset\Lambda$. A point process on
$\Lambda$ is a probability measure $\mathbb{P}$ on $\mathcal
{M}(\Lambda)$. Let $M_n$ denote the factorial moment measure, that is,
for disjoint Borel sets $A_1,\ldots, A_m$ in $\Lambda$ and for all
$(n_1,\ldots,n_m) \in\mathbb{N}^m$
%
\begin{equation}
M_n\bigl(A_1^{n_1} \times\cdots\times
A_m^{n_m} \bigr) =\mathbb{E} \Biggl[ \prod
_{i=1}^m \frac{ \xi(A_i)!}{(\xi(A_i)-n_i)!} \Biggr].
\end{equation}

Suppose that $\lambda$ is a reference measure on $\Lambda$. For
example, if $\Lambda=\mathbb{R}$, we can choose $\lambda$ to be the
Lebesgue measure. If
%
\begin{equation}
M_n(A_1,\ldots,A_n)= \int
_{A_1 \times\cdots\times A_n} \rho_n(x_1,\ldots,x_n) \,d\lambda(x_1) \cdots d \lambda(x_n)
\end{equation}
for all Borel sets $A_i$ in $\Lambda$, we call $\rho_n$ to be the
$n$th correlation function. For discrete processes, $\rho_n(x_1,\ldots,x_n)$ is equal to the probability of $n$-tuples of particles at $x_1,\ldots, x_n$, whereas for continuous processes, $\rho_n$ is the density
of seeing particles. For example, if $\rho_n (x_1, \ldots, x_n)=\rho
(x_1)\cdots\rho(x_n)$ where $\rho\in L^1$ with $\Lambda=\mathbb{R}$
and $\lambda$ is the Lebesgue measure, then the point process is the
Poisson point process on $\mathbb{R}$.

A point process is called \emph{determinantal} if there exists a
function $\mathbb{K}\dvtx \Lambda\times\Lambda\to\mathbb{C}$ called
the correlation kernel, with
%
\begin{equation}
\rho_n(x_1, \ldots, x_n) = \det\bigl(
\mathbb{K}(x_i,x_j) \bigr)_{i,j=1}^n.
\end{equation}
This leads to the following characterization of a determinantal point
process. Let $C_c^+(\Lambda)$ be the set of all
nonnegative continuous functions
on $\Lambda$ with compact support. Take $\psi\in C_c^+(\Lambda)$,
let $A$ denote the support of $\psi$ and set $\phi=1-e^{-\psi}$. Let
$\mathbb{I}_A$ denote the indicator
function for the set $A$. Then,
provided $\phi\mathbb{K}\mathbb{I}_A$ is trace class, and the
Fredholm determinant is given by its Fredholm expansion,
%
\begin{equation}
\mathbb{E} \bigl[ e^{-\sum_j\psi(x_j)} \bigr]=\det(I-\phi\mathbb{K}
\mathbb{I}_A)_{L^2(\Lambda,\lambda)},
\end{equation}
where ${x_j}$ are the points in the process.

\subsubsection{Particles}\label{sec2.1.3} \label{subsubParticles}

Another way of viewing domino tilings of an Aztec diamond is a particle
system formed from the zig-zag particles used in~\cite{Joh05}. These
particles can be described as follows:
for $w \in\mathtt{W}$, we have a blue particle at $w$ if and only if
a dimer covers the edge $(w+e_1,w)$ or the edge $(w-e_2,w)$.
For $b \in\mathtt{B}$, we have a red particle at $b$ if and only if a
dimer covers the edge $(b,b-e_1)$ or the edge $(b,b-e_2)$. By this
setup, particles are present on south and west dimers, with blue
particles sitting on white vertices and red particles sitting on black vertices.

%
\begin{figure}[t]

\includegraphics{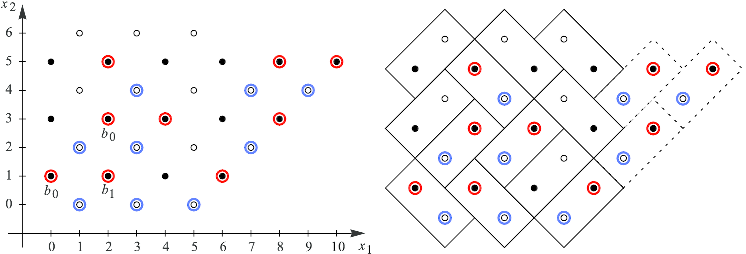}

\caption{The figure on the left shows the red--blue particles with the
Kasteleyn orientation for an Aztec diamond of size 3 (with additional
vertices). The figure on the right shows the same configuration of
red--blue particles with the domino tiling. This includes the three
additional south dominoes.}\label{figdominoparticles}
\end{figure}

The particle system considered in~\cite{Joh05} came with its own
coordinate system; see Figure~4 in Section~2 of that paper. The
transformation between that system of coordinates and the Kasteleyn
coordinates is
%
\begin{equation}
\label{dimerstoparticles1} \cases{ u_1=x_2,
\vspace*{3pt}\cr
\displaystyle
\displaystyle u_2=\frac{x_2-x_1+1}{2},}
\end{equation}
where $(u_1,u_2)$ are the particle coordinates and $(x_1,x_2)$ are the
Kasteleyn coordinates.
Figure~\ref{figdominoparticles} shows the red--blue particles along
with the corresponding tiling.

It is shown in \cite{Joh05} that the particles form a determinantal
point process with correlation kernel given by
%
\begin{equation}\label{dimerstoparticlestwo}
K_n(u_1,u_2;  v_1,v_2)=
\widetilde{K}_n (u_1,u_2; v_1,v_2)
- \phi_{u_1,v_1} (u_2,v_2),
\end{equation}
where
%
\begin{eqnarray}
&& \widetilde{K}_n(2r-\varepsilon_1, u_2; 2s-
\varepsilon_2,v_2)
\nonumber\\[-8pt]\label{dimerstoparticlesKtilde} \\[-8pt]
&&\qquad = \frac{1}{(2\pi i)^2} \int
_{\gamma_{r_1}} \frac{dw}{w} \int_{\gamma
_{r_2}}\frac{dz}{z} \frac{z^{v_2}}{w^{u_2}} \frac{(1-
az)^{n-s+\varepsilon_2} (1+a/z)^s}{(1- a w)^{n-r+\varepsilon_1}
(1+a/w)^r}\frac{w}{w-z},\nonumber
\\
\quad\qquad && \phi_{2r-\varepsilon_1,2s-\varepsilon_2} (u_2,v_2)
\nonumber\\[-8pt]\label{dimerstoparticlesfour} \\[-8pt]
&&\qquad = \frac{\mathbb
{I}(2r-\varepsilon_1 < 2s - \varepsilon_2)}{2\pi i} \int
_{\gamma_1} z^{v_2-u_2} \frac{ (1- az)^{r-s+\varepsilon_2-\varepsilon
_1}}{(1+a/z)^{r-s}} \frac{d z}{z}\nonumber
\end{eqnarray}
and $\varepsilon_1,\varepsilon_2 \in\{0,1\}$, $a<r_1<1/a$,
$0<r_2<r_1$ and $\gamma_t$ denotes a circle around $0$ with radius $t$.
Before setting $a=1$, one has to make an appropriate deformation of contours.

\subsubsection{The Kasteleyn matrix and Kenyon's formula}\label{sec2.1.4}
\label{subsecTheKasteleynmatrix}

The Kasteleyn matrix, introduced in \cite{Kas61,Kas63}, can be used
to count the number of weighted dimer coverings of a graph, and the
inverse of the Kasteleyn matrix can be used to compute local
statistics~\cite{Ken97}.

For a finite bipartite graph $G$, a \emph{Kasteleyn matrix} is a
signed weighted adjacency matrix of the graph with rows indexed by
black vertices and columns indexed by white vertices; see \cite
{Ken09} for details.
The sign is chosen according to a \emph{Kasteleyn orientation} of the
graph. This means assigning a sign (possibly complex valued) to each
edge weight so that the product of the edge weights around each face is
negative.

For the Aztec diamond graph, we denote $K$ to be the matrix with $K\dvtx
\mathtt{B} \times\mathtt{W} \to\mathbb{C}$ where $K(b,w)=K_{b,w}$
for $b=(x_1,x_2) \in\mathtt{B}$ and $w \in\mathtt{W}$ with
%
\begin{equation}
\label{kasteleynweights}
\qquad K (b,w)= \cases{ (-1)^{l+(x_1+x_2-1)/2},
&\quad if $w=b+
(-1)^l e_1 \in\mathtt{W}$,
\vspace*{3pt}\cr
(-1)^{l+(x_1+x_2-1)/2} a i, &
\quad if $w=b-(-1)^le_2 \in\mathtt{W}$,
\vspace*{3pt}\cr
0, &\quad
otherwise.}
\end{equation}
This matrix is a \emph{Kasteleyn matrix} for the Aztec diamond graph.
This matrix is a~Kasteleyn matrix for the Aztec diamond graph; see Figure \ref{figweights}.

%
\begin{thma}[(\cite{Kas61})]
The number of weighted dimer coverings of the Aztec diamond graph is
equal to $|\det K|$.
\end{thma}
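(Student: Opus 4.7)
The plan is to deduce the statement from Kasteleyn's general theorem for planar bipartite graphs. Expanding the determinant as a sum over bijections $\sigma : \mathtt{B} \to \mathtt{W}$,
\begin{equation*}
\det K = \sum_{\sigma} \mathrm{sgn}(\sigma) \prod_{b \in \mathtt{B}} K(b,\sigma(b)),
\end{equation*}
only those $\sigma$ for which every pair $(b,\sigma(b))$ is an edge of the Aztec diamond graph contribute a nonzero term, and such $\sigma$ are in natural bijection with dimer coverings. The absolute value of each nonzero term equals the product of $|K(b,\sigma(b))|$, which is the (unsigned) weight of the corresponding dimer covering. So it suffices to show that all nonzero terms have the same argument; then $|\det K|$ equals the sum of the weights of all dimer coverings.

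To prove the argument is constant, I would verify that the weights \eqref{kasteleynweights} constitute a Kasteleyn orientation in the sense of \cite{Kas:61,Ken:09}: for each bounded face $f$ of the Aztec diamond graph, the product of the four signed weights along the 4-cycle bounding $f$ is a negative real multiple of the product of the four unsigned weights. Each face is a 4-cycle $(b, w, b', w')$ where $b, b' \in \mathtt{B}$ and $w, w' \in \mathtt{W}$ differ by $\pm e_1, \pm e_2$. I would enumerate the possible local configurations (there are only a handful, determined by the parity of $(x_1+x_2-1)/2$ at the black vertices of $f$ and by the orientation of the face). In each case the product of the four signs, including the two factors of $ai$ coming from the $\pm e_2$-edges, simplifies to $-a^2$ times the product of the unsigned weights, as desired. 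The sign works out because the exponent $l + (x_1+x_2-1)/2$ changes by an odd amount exactly at one of the two black vertices of each face, and the two factors of $i$ contribute $i^2 = -1$.

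Once this face condition is established, the standard argument (see, e.g., \cite{Ken:09}) shows that for any two dimer coverings $M_1, M_2$, their symmetric difference is a disjoint union of cycles; each such cycle bounds a union of faces, so by the face condition the two corresponding terms in the determinant expansion have the same argument. Summing over all dimer coverings then gives $|\det K| = \sum_{M} \prod_{(b,w) \in M} |K(b,w)|$, which is the total weighted count of dimer coverings.

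The main obstacle is purely bookkeeping: carefully checking the face sign condition for the explicit weights \eqref{kasteleynweights}, since the sign depends both on $l$ and on the parity $(x_1+x_2-1)/2$, and one must confirm that the extra factor of $i$ on the $e_2$-edges combines correctly with the $(-1)$-signs to yield the required negative product around every 4-cycle. No conceptual novelty is needed; everything else is the standard consequence of Kasteleyn's theorem.
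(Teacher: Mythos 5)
The paper itself gives no proof of this statement: it is quoted directly from Kasteleyn \cite{Kas:61}, and the only ingredient specific to this paper is the (unproved, but true) assertion that the weights \eqref{kasteleynweights} constitute a valid Kasteleyn signing. Your route --- expanding $\det K$ over bijections $\mathtt{B}\to\mathtt{W}$, identifying the nonzero terms with dimer coverings, and checking the face condition so that all terms share a common argument --- is precisely the standard argument that the citation stands for, so in substance your outline is sound. Two points of your sign bookkeeping are wrong as written, however, and the second would sink the proof if taken literally. First, the product of the four signed weights around a face equals $-a^2$, i.e.\ $(-1)$ times the unsigned product $a^2$; it is not ``$-a^2$ times the product of the unsigned weights''. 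Second, the exponent $l+(x_1+x_2-1)/2$ does not change ``by an odd amount exactly at one of the two black vertices of each face''. Checking the two face types (e.g.\ the face with black vertices $(2,1),(2,3)$ and white vertices $(1,2),(3,2)$, versus the face with black vertices $(0,1),(2,1)$ and white vertices $(1,0),(1,2)$) shows the parity flips at \emph{both} black vertices or at \emph{neither}, never at exactly one; hence the real signs around a face always multiply to $+1$, and the required negativity comes entirely from $i\cdot i=-1$ on the two $\pm e_2$-type edges. If the parity really flipped at exactly one black vertex, the face product would be $+a^2$ and the constancy of the argument would fail, so the enumeration you defer is exactly where the content of the verification lies. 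Once that is done correctly, the remaining step (the symmetric difference of two matchings is a disjoint union of cycles, each enclosing an even number of vertices, so the face condition forces equal arguments for all nonzero terms) is indeed the standard Kasteleyn argument as in \cite{Ken:09}, and it yields $|\det K|=\sum_M\prod_{(b,w)\in M}|K(b,w)|$ as claimed.
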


%
\begin{figure}[b]

\includegraphics{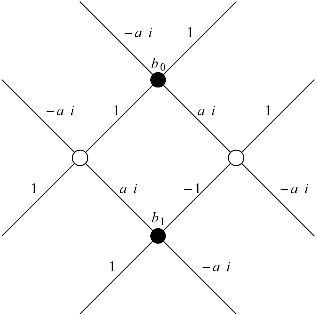}

\caption{The complex weights associated to the Kasteleyn matrix given
in~(\protect\ref{kasteleynweights}). We have that the vertex
$b_i=(x_1,x_2)$ has $(x_1+x_2-1)/2 \operatorname{mod}2 =i$ for $i \in
\{0,1\}$.}\label{figweights}
\end{figure}

In~\cite{Ken97}, Kenyon found that the dimers form a determinantal
point process with the correlation kernel written in terms of the
inverse of the Kasteleyn matrix (referred to as the \emph{inverse
Kasteleyn matrix}).
Here, we state that result for the Kasteleyn matrix given in~(\ref
{kasteleynweights}).
Suppose that $E=\{e_i\}_{i=1}^n$ are a collection of distinct edges
with $e_i=(b_i,w_i)$, where $b_i$ and $w_i$ denote black and white vertices.

\begin{thma} \cite{Ken97}\label{localstatisticsthm}.
The dimers form a determinantal point process on the edges of the Aztec
diamond graph with correlation kernel given by
%
\begin{equation}
L(e_i,e_j) = K(b_i,w_i)
K^{-1} (w_j,b_i),
\end{equation}
where $K(b,w) = K_{bw}$ and $K^{-1}(w,b)= (K^{-1})_{wb}$.
\end{thma}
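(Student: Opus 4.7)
The plan is to compute, for an arbitrary collection of distinct edges $E = \{e_i\}_{i=1}^n$ with $e_i = (b_i, w_i)$, the probability that all of them appear simultaneously in a (weighted) random dimer cover, and to verify directly that this probability equals $\det(L(e_i, e_j))_{i,j=1}^n$. Because the $n$-point correlation functions characterize a point process and any such configuration is allowed, this identifies the edge process as determinantal with kernel $L$.

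First I would write
$$\P(E \subseteq \text{cover}) = \frac{1}{Z(G)} \Bigl(\prod_{i=1}^n |K(b_i, w_i)|\Bigr) \cdot Z(G \setminus V_E),$$
where $V_E = B' \cup W'$ with $B' = \{b_1,\ldots,b_n\}$ and $W' = \{w_1,\ldots,w_n\}$, and $Z(\cdot)$ denotes the weighted partition function for dimer covers. This holds because dimer covers of $G$ containing $E$ are in bijection with dimer covers of the induced subgraph $G \setminus V_E$, each such cover carrying an extra factor equal to the product of the weights of the edges in $E$. By the Kasteleyn theorem stated above, $Z(G) = |\det K|$. Since the restriction of a Kasteleyn orientation to an induced subgraph is still Kasteleyn, one also gets $Z(G \setminus V_E) = |\det \widehat{K}|$, where $\widehat{K}$ is the submatrix of $K$ obtained by deleting the rows in $B'$ and the columns in $W'$.

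Next I would invoke the classical Jacobi complementary-minor identity: for an invertible matrix $K$ and equal-size index sets $W', B'$,
$$\det\bigl((K^{-1})[W';B']\bigr) \;=\; (-1)^{\sigma}\,\frac{\det \widehat{K}}{\det K},$$
where $(K^{-1})[W';B']$ denotes the submatrix of $K^{-1}$ with rows indexed by $W'$ and columns by $B'$, and $\sigma$ is an explicit sign depending only on the positions of the indices. Pulling the diagonal factor $K(b_i, w_i)$ into the $i$-th row of the $n \times n$ determinant gives
$$\det\bigl(L(e_i, e_j)\bigr)_{i,j=1}^n \;=\; \prod_{i=1}^n K(b_i, w_i) \cdot \det\bigl(K^{-1}(w_j, b_i)\bigr)_{i,j=1}^n,$$
and combining this with the two previous displays shows that, up to sign, $\det(L(e_i,e_j))_{i,j}$ coincides with $\P(E \subseteq \text{cover})$.

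The main obstacle is the careful bookkeeping of signs. The Kasteleyn phases appearing in the entries $K(b_i, w_i)$, the sign $(-1)^\sigma$ coming from Jacobi, and the fact that every nonzero term in $\det K$ (respectively $\det \widehat K$) contributes with one common sign --- the defining property of a Kasteleyn orientation --- must conspire so that the absolute values drop away and the quantity $\det(L(e_i,e_j))_{i,j}$ is genuinely equal to the nonnegative probability $\P(E \subseteq \text{cover})$. Once this cancellation is confirmed, one reads off $\rho_n(e_1, \ldots, e_n) = \det(L(e_i, e_j))_{i,j=1}^n$, which is precisely the assertion that the dimer process is determinantal with correlation kernel $L$.
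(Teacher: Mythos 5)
Your route is genuinely different from the paper's: the paper simply quotes Kenyon's probability formula from \cite{Ken:97}, $\P(e_1,\dots,e_n)=\prod_{i}K(b_i,w_i)\det\bigl(K^{-1}(w_j,b_i)\bigr)_{i,j}$, and absorbs the prefactor into the rows of the determinant, whereas you propose to reprove Kenyon's theorem itself from Kasteleyn's theorem plus the Jacobi complementary-minor identity. That is the right strategy (it is essentially Kenyon's original argument), but as written it has a genuine gap, and it sits exactly where you yourself locate ``the main obstacle'': the sign cancellation is never verified, and the one justification you do give for the intermediate step is not valid. The assertion that ``the restriction of a Kasteleyn orientation to an induced subgraph is still Kasteleyn'' cannot be taken for granted: deleting the vertices $V_E$ merges faces of the Aztec diamond graph, and the defining condition (product of edge weights around each face negative) is a condition on the faces of the \emph{new} graph, which is neither checked nor automatic. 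Without a replacement for this step you do not know that all nonzero terms of $\det\widehat{K}$ carry a common sign, hence not that $|\det\widehat{K}|=Z(G\setminus V_E)$; and even granting that, your argument as stated only yields $\P(E\subseteq\text{cover})=\bigl|\det(L(e_i,e_j))\bigr|$, i.e.\ the identity up to an undetermined phase, which is weaker than the theorem.

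Both defects are repaired simultaneously by one observation, which should replace the induced-subgraph claim: expand $\det K$ and group the terms coming from matchings containing $E$. Every nonzero term of $\det\widehat{K}$ corresponds to a matching $M'$ of $G\setminus V_E$, and $M'\cup E$ is a matching of $G$; the term of $M'\cup E$ in $\det K$ equals $\epsilon\,\prod_i K(b_i,w_i)$ times the term of $M'$ in $\det\widehat{K}$, where $\epsilon=(-1)^{\sigma}$ is the fixed sign determined only by the positions of the deleted rows and columns. Since all terms of $\det K$ share one common phase (this is what the Kasteleyn property of $K$ gives), it follows first that all terms of $\det\widehat{K}$ share one common phase, so $|\det\widehat{K}|=Z(G\setminus V_E)$ with no appeal to any Kasteleyn property of the subgraph; and second that the total weight of matchings containing $E$ equals, up to that same global phase, $\epsilon\prod_i K(b_i,w_i)\det\widehat{K}$. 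Dividing by $\det K$ the phases cancel and one gets the exact identity
\begin{equation*}
\P(E\subseteq\text{cover})=\epsilon\,\prod_{i=1}^n K(b_i,w_i)\,\frac{\det\widehat{K}}{\det K},
\end{equation*}
not merely an equality of absolute values. Finally, the complementary-minor sign appearing in Jacobi's identity is this same $\epsilon$, so the two signs cancel identically and you obtain $\P(E\subseteq\text{cover})=\prod_i K(b_i,w_i)\det\bigl(K^{-1}(w_j,b_i)\bigr)_{i,j}=\det\bigl(L(e_i,e_j)\bigr)_{i,j}$, with nothing left to ``conspire''. (You should also assume the $e_i$ pairwise vertex-disjoint; if two edges share a vertex, both the probability and the determinant vanish, so no generality is lost.)
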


The above formula is sometimes referred to as \emph{Kenyon's formula}.

\begin{pf*}{Proof of Theorem \ref{localstatisticsthm}}
By \cite{Ken97}, we have that the probability of finding dimers at
the edges $e_1,\ldots, e_n$ is
%
\begin{eqnarray}
\label{localstats} \mathbb{P}(e_1, \ldots, e_n) & =& \prod
_{i=1}^n K(b_i,w_i)
\det\bigl( K^{-1}(w_j,b_i)
\bigr)_{i,j=1}^n
\nonumber
\\
& =& \det\bigl(K(b_i,w_i) K^{-1}(w_j,b_i)
\bigr)_{i,j=1}^n
\\
& =& \det\bigl( L(e_i,e_j) \bigr)_{i,j=1}^n.
\nonumber
\end{eqnarray}\upqed
\end{pf*}

\subsubsection{The inverse Kasteleyn matrix}\label{sec2.1.5}

The inverse Kasteleyn matrix for domino tilings of the Aztec diamond
was originally computed in \cite{Hel00} for the case when $a=1$.
In that paper, Helfgott explicitly enumerated $K^{-1}(w,b)\* 2^{n(n+1)/2}$
which is the number of signed dimer coverings of an Aztec diamond graph
with the vertices $w \in\mathtt{W}$ and $b \in\mathtt{B}$ removed.
We generalize this formula so that we can consider different weights
for vertical and horizontal tiles:


%
\begin{thma} \label{discretecouplingfunction}
For $x=(x_1,x_2) \in\mathtt{W}$ and $y=(y_1,y_2) \in\mathtt{B}$, we have
%
\begin{eqnarray}
K^{-1}(x,y) &=& \cases{ f_1(x,y), &\quad for
$x_1<y_1+1$,
\vspace*{3pt}\cr
f_1(x,y) -
f_2(x,y), &\quad for $x_1 \geq y_1+1$,}
\end{eqnarray}
where
%
\begin{eqnarray}
\label{functionf1} f_1(x,y) & =& \frac{(-1)^{(y_1+y_2+x_1+x_2)/4
}}{(2\pi i)^2}\nonumber
\\
&&{}\times \int
_{\mathcal{E}_2} \int_{\mathcal{E}_1} \frac{w^{y_1/2}}{z^{(x_1+1)/2}(w-z)}
\\
&&\hspace*{41pt}{}\times \frac{ (a+z)^{x_2/2} (a
z-1)^{(2n-x_2)/2}}{ (a w-1)^{(2n+1-y_2)/2} (a+w)^{(y_2+1)/2}} \,dz \,dw
\nonumber
\end{eqnarray}
and
%
\begin{eqnarray}\label{functionf2}
f_2(x,y)&=&\frac{(-1)^{(x_1+ x_2 + y_1 +
y_2)/4}}{2\pi i} a^{(y_2-x_2-1)/2}
\nonumber\\[-8pt]\\[-8pt]
&&{}\times  \int
_{\mathcal{E}_1} \frac{ z^{(y_2-x_2-1)/2} (1/a
+z)^{(y_1-x_1-1)/2}}{ (1/a+a+z)^{(y_2-x_2+1)/2}}\,dz,\nonumber
\end{eqnarray}
where $\mathcal{E}_1$ is the positively oriented contour
$|z|={\epsilon}$, $\mathcal{E}_2$ is the positively oriented contour
$|w-1/a|={\epsilon}$ and the contours do not intersect.
\end{thma}

For domino tilings, the inverse Kasteleyn matrix cannot be obtained
directly from the correlation kernel of the red--blue particles
introduced in Section~\ref{subsubParticles}. However, the correlation
kernel of the red--blue particles can be directly obtained from the
inverse Kasteleyn matrix.
This is different to lozenge tilings, where one can obtain the inverse
Kasteleyn matrix from the interlaced particle system~\cite{BGR10,Pet12a}.

We initially obtained the above expression for $K^{-1}$ using a guess
based on Helfgott's formula in~\cite{Hel00} for $K^{-1}$ when $a=1$,
Theorem~\ref{localstatisticsthm} and the correlation kernel for the
particle system given in~(\ref{dimerstoparticlestwo}). In
Section~\ref{secDiscreteSetting}, we prove Theorem~\ref
{discretecouplingfunction} by verifying the equation $K \cdot K^{-1} =
\mathbb{I}$ for our conjectured formula for $K^{-1}$. In the proof, we
expand $K \cdot K^{-1}$ entrywise, which gives a five-term relation
involving entries of $K^{-1}$ due to the sparseness of the matrix $K$.
A similar set of relations (without a boundary condition) was used in
\cite{BS10}.

We can write the particle correlation kernel given in~(\ref
{dimerstoparticlestwo}) in terms of the inverse Kasteleyn matrix. The
relation is similar to that found in lozenge tiling; see~\cite
{Pet12a} and~\cite{BGR10}. Note that for lozenge tilings, the
particle correlation kernel and the kernel from the inverse Kasteleyn
matrix are in bijection.
We find the following proposition.

\begin{prop}\label{propdimerstoparticles}
%
\begin{eqnarray}\label{propeqndimerstoparticles}
\qquad && K^{-1}\bigl((x_1,x_2),(y_1,y_2)
\bigr)
\nonumber\\[-8pt]\\[-8pt]
&&\qquad  = -(-1)^{(x_1-x_2+y_1-y_2)/4} K_n \biggl( y_2,
\frac{y_2-y_1+1}{2};  x_2, \frac{x_2-x_1+1}{2} \biggr).\nonumber
\end{eqnarray}
\end{prop}

We prove this proposition in Section~\ref{secDiscreteSetting}.

\subsection{Edge fluctuations of southern dominoes}\label{sec2.2}
\label{subsecEdgefluctuationsofsoutherndominoes}

\subsubsection{Southern domino process}\label{sec2.2.1}

From our expression for the inverse Kasteleyn matrix in Theorem~\ref
{discretecouplingfunction} and using Theorem~\ref
{localstatisticsthm}, it is now possible to compute any joint
probability of dominoes.
We choose to compute the probability distribution of southern dominoes
(or equivalently dimers) in various locations of the Aztec diamond.

The southern domino process is defined as follows: fix $r$, $1 \leq r
\leq n$. With a southern domino on the line $y=r$, we mean a dimer with
a white vertex $w=(2s-1,2r)$ and a black vertex $b=(2s,2r+1)$ for some
$s \in\{1, \ldots, n\}$, and we say that the southern domino is
located at $s$ on the line $y=r$. The southern dominoes form a
determinantal process by Theorem \ref{localstatisticsthm}, and in
particular so do the southern dominoes on the line $y=r$ with the
kernel given by the following lemma.

%
\begin{lemma}
\label{lemmakernelofsoutherndominoprocess}
A kernel of the determinantal process given by the positions of the
southbound dominoes on a fixed line $y=r$ in a random tiling of an
Aztec diamond is
%
\begin{eqnarray}
\label{asymptoticskernel}
\qquad && L(x_1,x_2)
\nonumber\\[-8pt]\\[-10pt]
&&\qquad:=-\frac{ 1}{(2\pi i )^2}
\int
_{\mathcal{E}_1}dz \int_{\mathcal{E}_2} dw\,\frac{w^{x_2}}{z^{x_1}}
\frac{ (a+z)^r (a
z-1)^{n-r}}{ (aw-1)^{n-r} (a+w)^{r+1} (w-z)}.\nonumber
\end{eqnarray}
\end{lemma}

\begin{pf}
From Theorem~\ref{localstatisticsthm}, the kernel of the southern
dominoes along a fixed line $y=r$ is given by (up to conjugation)
%
\begin{equation}
K(b,\tilde{w}) K^{-1}(w,b),
\end{equation}
where we set $w=(2x_1-1,2r)$, $\tilde{w}=(2x_2-1,2r)$ and
$b=(2x_2,2r+1)$. From Theorem~\ref{discretecouplingfunction}, we have
a formula for each entry of $K^{-1}$ and in the case $x_1<x_2$, we have that
%
\begin{equation}
f_2(w,b)= \frac{(-1)^{((x_1+x_2)/2)+r}}{2\pi i} \int_{|z|={\epsilon}}
\frac{(1/a+z)^{x_2-x_1-1}}{1/a+a+z} \,dz =0
\end{equation}
because the integrand is analytic at $z=0$. From the above equation, we have
%
\begin{eqnarray}
&& K(b,\tilde{w}) K^{-1}(w,b)\nonumber
\\
&&\qquad = (-1)^{ (2 x_2+2r)/2} \frac{
(-1)^{((x_1+x_2)/2)+r}}{(2\pi i )^2 }
\\
&&\quad\qquad{}\times \int_{\mathcal{E}_1} \int_{\mathcal{E}_2} \frac{w^{x_2}}{z^{x_1}}
\frac{ (a+z)^r (a
z-1)^{n-r}}{ (aw-1)^{n-r} (a+w)^{r+1} (w-z)} \,dw \,dz.\nonumber
\end{eqnarray}
In the\vspace*{2pt} above equation, the sign is equal to
$(-1)^{(x_1+3x_2)/2}=-(-1)^{(x_2-x_1)/2}$. We can remove a factor of
$(-1)^{(x_2-x_1)/2}$ from the above equation by a conjugation which
gives~(\ref{asymptoticskernel}).
\end{pf}

\subsubsection{Thickening and thinning determinantal point processes}\label{sec2.2.2}

Consider a determinantal point process $\nu$ on a space $\Lambda$
with correlation kernel $\mathbb{K}$. Let $0\le\alpha\le1$ and
consider the
point process obtained by removing each point in the process
independently with probability $1-\alpha$. We will call this new point process
the \emph{thinned determinantal point process} with correlation kernel
$\mathbb{K}$ and parameter $\alpha$. A way of modifying the original
point process to obtain a point process with multiple points is by
taking each point in the process $\nu$ independently with multiplicity $m$,
where $m$ is a geometric random variable with parameter $\beta$,
$\mathbb{P}[m=k]=(1-\beta)\beta^{k-1}$, $k\ge1$. We will call this
(multiple) point process a \emph{thickened determinantal point
process} with correlation kernel $\mathbb{K}$
and parameter $\beta$.

%
\begin{prop} \label{thinnedthickprop}
The thinned determinantal point process $\{x_j\}$ with correlation
kernel $\mathbb{K}$ and parameter $\alpha$ is again a determinantal
point process with
correlation kernel $\alpha\mathbb{K}$, that is,
%
\begin{equation}
\label{thinnedexpectation} \mathbb{E} \bigl[e^{-\sum_{j}\psi(x_j)} \bigr
]=\det(I-\phi\alpha
\mathbb{K}\mathbb{I}_A),
\end{equation}
for every $\psi\in C_c^+(\Lambda)$, $\phi=1-e^{-\psi}$ and
$A=\operatorname{supp}\psi=\operatorname{supp}\phi$. With the same
notation the thickened determinantal point process
with kernel $\mathbb{K}$ and parameter $\beta$ is characterized by
%
\begin{equation}
\label{thickenedexpectation} \mathbb{E} \bigl[e^{-\sum_{j}\psi(x_j)}
\bigr]=\det\biggl(I-
\frac
{\phi}{1-\beta+\beta\phi}\mathbb{K}\mathbb{I}_A \biggr),
\end{equation}
where $\{x_j\}$ is now the multi-set of points of the process.
\end{prop}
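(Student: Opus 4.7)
The plan is to use the generating-functional characterization of determinantal point processes displayed in the excerpt, namely $\mathbb{E}[e^{-\sum_j\psi(x_j)}]=\det(I-\phi\mathbb{K}\mathbbm{I}_A)$ with $\phi=1-e^{-\psi}$, and in each case absorb the independent random modification (Bernoulli thinning or geometric thickening) attached to each point $x_j$ into a \emph{new} test function $\tilde\psi$. Once the new $\tilde\psi$ is shown to be in $C_c^+(\Lambda)$, applying the characterization to $\tilde\psi$ on the original process $\nu$ yields the claim, and the proof reduces to a one-point calculation.

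For the thinned process, let $\{\varepsilon_j\}$ be i.i.d.\ Bernoulli$(\alpha)$ variables independent of $\nu$, so $\sum_j\psi(\tilde x_j)=\sum_j\varepsilon_j\psi(x_j)$. Conditioning on $\nu$, a single-point computation gives $\mathbb{E}[e^{-\varepsilon_j\psi(x_j)}\mid x_j]=1-\alpha\phi(x_j)$. Hence if $\tilde\phi:=\alpha\phi$ and $\tilde\psi:=-\log(1-\tilde\phi)$, then $\tilde\psi\in C_c^+(\Lambda)$ with the same support $A$, and tower-conditioning yields
\begin{equation*}
\mathbb{E}\Bigl[e^{-\sum_j\psi(\tilde x_j)}\Bigr]=\mathbb{E}\Bigl[e^{-\sum_j\tilde\psi(x_j)}\Bigr]=\det(I-\tilde\phi\,\mathbb{K}\mathbbm{I}_A)=\det(I-\alpha\phi\,\mathbb{K}\mathbbm{I}_A),
\end{equation*}
which proves \eqref{thinnedexpectation}.

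For the thickened process, let $\{m_j\}$ be i.i.d.\ geometric variables with $\mathbb{P}[m_j=k]=(1-\beta)\beta^{k-1}$, independent of $\nu$. Writing $u=e^{-\psi(x_j)}=1-\phi(x_j)$ and summing the geometric series gives
\begin{equation*}
\mathbb{E}\bigl[e^{-m_j\psi(x_j)}\mid x_j\bigr]=\sum_{k\ge 1}(1-\beta)\beta^{k-1}u^k=\frac{(1-\beta)(1-\phi(x_j))}{1-\beta+\beta\phi(x_j)}.
\end{equation*}
Setting $\tilde\phi:=1-\frac{(1-\beta)(1-\phi)}{1-\beta+\beta\phi}$ and simplifying the numerator produces the clean identity $\tilde\phi=\phi/(1-\beta+\beta\phi)$. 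One then checks $0\le\tilde\phi\le 1$ (since $0\le\phi\le 1$ and $1-\beta+\beta\phi\ge 1-\beta>0$) and that $\tilde\phi$ has the same compact support as $\phi$, so $\tilde\psi:=-\log(1-\tilde\phi)$ lies in $C_c^+(\Lambda)$. Applying the determinantal characterization to $\tilde\psi$ on the original process $\nu$ gives exactly \eqref{thickenedexpectation}.

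The only real subtlety is the verification that the new test functions $\tilde\psi$ lie in $C_c^+(\Lambda)$; continuity is automatic from the algebraic formulas and support preservation follows because both transformations vanish precisely where $\phi$ vanishes. Aside from this small check, everything reduces to the elementary one-point conditional calculations above, which is why the proof is essentially short once the right reformulation is found.
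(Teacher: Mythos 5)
Your proposal is correct and follows essentially the same route as the paper: condition on the underlying process, compute the one-point expectation $\mathbb{E}[1-e^{-\varepsilon_j\psi(x_j)}\mid x_j]=\alpha\phi(x_j)$ (respectively $\phi(x_j)/(1-\beta+\beta\phi(x_j))$ for the geometric marks), and identify the resulting product expectation with the Fredholm determinant via the generating-functional characterization. The only cosmetic difference is that you package the modified factor as a new admissible test function $\tilde\psi=-\log(1-\tilde\phi)$ before invoking the characterization, while the paper applies the determinant identity to the product $\mathbb{E}_K[\prod_j(1-\tilde\phi(y_j))]$ directly.
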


We will prove the proposition in Section~\ref{sec4}. Note that the thickened process is no longer a determinantal
point process since determinantal point processes are
always simple point processes.

\subsubsection{Asymptotic coordinates}\label{sec2.2.3}
Here, we introduce the asymptotic coordinates of the unfrozen region.
As $n \rightarrow\infty$, for the (rescaled) Aztec diamond with
corners $(0,0), (1,0), (0,1)$ and $(1,1)$, the boundary between the
frozen and unfrozen regions is an ellipse~\cite{CEP96} whose equation
is given by
%
\begin{equation}
\label{ellipse} \frac{(v-u)^2}{1-p} + \frac{(u+v-1)^2}{p} =1,
\end{equation}
where $u \in[0,1]$ is the horizontal coordinate, $v\in[0,1]$ is the
vertical coordinate and $p=1/(1+a^2)$. We let $\mathcal{D}\subset
{\mathbb{R}}^2$ be the area bounded by the ellipse given in~(\ref{ellipse}).

For our results on the edge, we are interested in the boundary of the
ellipse. This is given by
%
\begin{equation}
v=1-u \pm2\sqrt{ (1-p)p(1-u)u} + p(2u-1).
\end{equation}
The arctic ellipse can be parametrized by $(u(k),v(k))$, where
$v(k)=1-k^2u(k)$ and
%
\begin{equation}
\label{asymptoticsuk} u(k)=\frac{1}{(1+a^2)(1+k^2)+2 a\sqrt{1+a^2}k}.
\end{equation}
We will be interested in two parts of the boundary.
The part where $k>0$ will be called the \emph{northern boundary}, and
the part where $k\in(-a^{-1}(1+a^2)^{1/2},-a(1+a^2)^{-1/2})=(-1/\sqrt
{1-p},-\sqrt{1-p})$ the
\emph{southern boundary}.
See Figure~\ref{fignortherboundarypicture} for an example of the
northern and southern boundaries and an explanation of the geometric
meaning of $k$.

%
\begin{figure}

\includegraphics{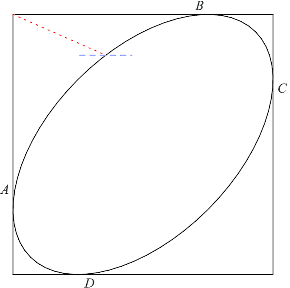}

\caption{This figure shows the intersection of the limiting ellipse
with the red dotted line given by $v=1-k^2u$. The southern dominoes on
the blue dashed line represent the southern domino process. The
northern boundary lies between $A$ and $B$ and the southern boundary
between $C$ and $D$.}
\label{fignortherboundarypicture}
\end{figure}

\subsubsection{Results on the southern domino process}\label{sec2.2.4}

Here, we consider the behavior of the southern domino process along the
northern and southern boundaries. Along the southern boundary the
southern domino
process is almost dense, and we have to consider the dual process, the
process of holes, instead. If we think of the locations in ${1,\ldots,n}$
of the southern dominoes on the line $y=r$ as positions of particles,
then the empty spaces, the holes, also form a determinantal point process
with a kernel $I-L$, where $L$ is the kernel for the particles.

In our scaling limits we will obtain the \emph{Airy kernel point
process} which is a determinantal point process with kernel
%
\begin{equation}
\label{Airykernel} K_{\Ai}(x,y)=\int_0^{\infty}
\oAi(x+t) \oAi(y+t) \,dt.
\end{equation}

Set
%
\begin{eqnarray}\label{beta}
\beta &=&-a\bigl(a+k\sqrt{1+a^2}\bigr),
\\
\label{alpha} \alpha&=&\frac{1}{1-\beta}=\frac{1}{1+a^2+ak\sqrt{1+a^2}},
\end{eqnarray}
and let $\lambda>0$ be given by
%
\begin{equation}
\label{lambda} \qquad \lambda^3=\pm\frac{a(a+k\sqrt
{1+a^2})^2}{(1+a^2)(ak^2+k\sqrt
{1+a^2})((1+a^2)(1+k^2)+2ak\sqrt{1+a^2})}
\end{equation}
with the plus sign for $k>0$ and the minus sign for $k<0$.

%
\begin{thma} \label{asymptoticsthmthinnedairy}
Let $a>0$ be fixed, and let $\lambda$ be given by (\ref{lambda}),
$\alpha$ by (\ref{alpha}) and $\beta$ by (\ref{beta}). Furthermore,
let $\{x_j\}$ be the positions of the
southern dominoes on the line $y=[(1-k^2u(k)n]$, where $u(k)$ is given
by (\ref{asymptoticsuk}). Consider a fixed $k>0$. Then the rescaled
southern domino
process at the northern boundary,
\[
\xi_j=\frac{u(k)n-x_j}{\lambda n^{1/3}},
\]
converges weakly to the thinned Airy kernel point process with
parameter $\alpha$.

Next, consider a fixed $k\in(-a^{-1}(1+a^2)^{1/2},-a(1+a^2)^{-1/2})$.
Let $\{y_j\}$ be the positions of the holes in the south domino
process, that is, the dual
south domino process, at the southern boundary. The rescaled point process
\[
\xi_j=\frac{y_j-u(k)n}{\lambda n^{1/3}}
\]
converges weakly to the thickened Airy kernel point process with
parameter~$\beta$.
\end{thma}

Successive independent thinning and rescaling of a point process
typically has a Poisson point process as its limit. If $a$ tends to
infinity, we see that the thinning parameter of the thinned Airy kernel
$\alpha$ tends to zero. Hence we can expect that if we let $a$ tend to
infinity with $n$ (but not too fast),
the southern domino point process close to the northern boundary should
converge to a Poisson process. This leads to the next theorem.

%
\begin{thma}\label{airytopoisson}
Fix $k>0$, and let $a=a(n)$, where $a(n)\to\infty$ but
$a(n)/\break  n^{1/10}\to0$ as $n\to\infty$. Set $c(a)=\pi
^{2/3}(1+1/k)^{1/3}a^{2/3}$, and let
$\{x_j\}$ be the positions of the south dominoes on the line
$y=[n(1-k^2u(k))]$. Then the rescaled point process
%
\begin{equation}
\xi_j=\frac{u(k)n+c(a)n^{1/3}-x_j}{c(a)n^{1/3}}
\end{equation}
converges weakly to a Poisson process with density $\rho(\xi)=\sqrt
{(1-\xi)_+}$.
\end{thma}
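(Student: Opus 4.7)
The plan is to combine a direct saddle-point analysis of the kernel $L$ from Lemma \ref{lemma:kernel of southern domino process} with the heuristic that thinning a determinantal process by a parameter $\alpha\to 0$ while simultaneously zooming out produces a Poisson limit. The starting observation that makes the target density plausible: in the regime where $a$ is a large constant, Theorem \ref{asymptotics:thm:thinnedairy} shows the process near the northern boundary is the thinned Airy process with parameter $\alpha\sim 1/(a^2(1+k))$ on scale $\lambda n^{1/3}$ with $\lambda\sim (a^2 k(1+k))^{-1/3}$. A direct computation gives $c(a)/\lambda\sim \pi^{2/3}(1+k)^{2/3}a^{4/3}$, so on the new (much larger) scale $c(a)n^{1/3}$ we lie deep in the bulk of the Airy process where its density is $\sqrt{-\xi}/\pi$. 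Multiplying by $\alpha$ for the thinning and by $c(a)/\lambda$ for the Jacobian of the rescaling gives $\alpha (c(a)/\lambda)^{3/2}\pi^{-1}\sqrt{(1-\xi)_+}\to\sqrt{(1-\xi)_+}$, matching the claimed density.

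The first step is to substitute $x_i=u(k)n+c(a)n^{1/3}(1-\xi_i)$ into the double-contour representation of $L(x_1,x_2)$ and to perform a steepest-descent analysis in the \emph{joint} limit $n\to\infty$, $a=a(n)\to\infty$, $a(n)=o(n^{1/10})$. The exponent in the integrand is
\begin{equation}\nonumber
G(z)=\tfrac{r}{n}\log(a+z)+\tfrac{n-r}{n}\log(az-1)-\tfrac{x_1}{n}\log z,
\end{equation}
together with its analogue in $w$; its saddle point is a double critical point at the same location as in the proof of Theorem \ref{asymptotics:thm:thinnedairy}, namely at the value that produces the arctic ellipse. I would then perform the standard cubic change of variables around this saddle, but keep track of how each error term depends on $a(n)$. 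The choice of scale $c(a)n^{1/3}$ is dictated by requiring the cubic term in $G$ to be $O(1)$, which yields $c(a)^3\sim a^2 n$.

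The second step is to show that, on the rescaled grid, the diagonal kernel $L(x,x)\cdot c(a)n^{1/3}$ converges to $\sqrt{(1-\xi)_+}$ and that the off-diagonal kernel decays sufficiently fast that, for any bounded Borel set $B$, the correlation functions satisfy
\begin{equation}\nonumber
\rho_k(\xi_1,\dots,\xi_k)=\det\bigl(L(\xi_i,\xi_j)\bigr)_{i,j=1}^k\longrightarrow \prod_{i=1}^k\sqrt{(1-\xi_i)_+},
\end{equation}
which is the Poisson factorization. Equivalently, one may work at the level of Fredholm determinants: for any $\psi\in C_c^+(\mathbb{R})$ with $\phi=1-e^{-\psi}$, expand
\begin{equation}\nonumber
\mathbb{E}\bigl[e^{-\sum_j\psi(\xi_j)}\bigr]=\det(I-\phi L\mathbbm{I}_A)=\exp\Bigl(-\mathrm{tr}(\phi L\mathbbm{I}_A)+\sum_{m\ge 2}\tfrac{(-1)^{m-1}}{m}\mathrm{tr}((\phi L\mathbbm{I}_A)^m)\Bigr),
\end{equation}
then argue that the trace converges to $\int\phi(\xi)\sqrt{(1-\xi)_+}\,d\xi$ while each of the higher traces tends to zero (they contain at least one off-diagonal factor which goes to $0$ uniformly on a compact set). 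The limit is precisely the Laplace functional of a Poisson process with the stated intensity.

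The main obstacle is the uniform control of the error terms in the steepest-descent analysis in the regime where $a=a(n)$ grows with $n$. The contour deformations used when $a$ is fixed (as in the proof of Theorem \ref{asymptotics:thm:thinnedairy}) must be redone and verified to produce error bounds that remain $o(1)$ under the constraint $a(n)=o(n^{1/10})$: tracking the next-order cubic and quartic corrections in the saddle expansion reveals that they contribute errors of order $a(n)^{10/3}/n^{1/3}$, exactly saturating the $n^{1/10}$ threshold. A secondary technicality is that the saddle point sits close to the singular point $z=1/a$ of the integrand, so one must keep the steepest-descent contours away from this singularity, and verify that the non-saddle portions of the contours contribute a negligible amount after multiplying by the normalizing factor $c(a)n^{1/3}$.
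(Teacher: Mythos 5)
Your overall architecture coincides with the paper's: express the Laplace functional as a Fredholm determinant of the rescaled kernel, show the trace converges to $\int\phi(\xi)\sqrt{(1-\xi)_+}\,d\xi$, kill the higher-order terms, and feed everything with a saddle-point analysis whose errors are tracked in the joint limit $a=a(n)\to\infty$, $a=o(n^{1/10})$; your heuristic $\alpha\,(c(a)/\lambda)^{3/2}\pi^{-1}\to 1$ for the density is also exactly right. The genuine gap is in the step that disposes of the terms of order $m\ge 2$. You justify it by saying each such trace ``contains at least one off-diagonal factor which goes to $0$ uniformly on a compact set.'' That cannot be correct as stated: the rescaled kernel is continuous and its diagonal converges to $\sqrt{(1-\xi)_+}\not\equiv 0$, so no uniform smallness is available on a compact set meeting the diagonal; and pointwise decay off the diagonal does not by itself control $\operatorname{tr}\bigl((\phi L\mathbbm{I}_A)^m\bigr)$, since the iterated integrals pick up mass from neighborhoods of the diagonal. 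What is needed is an integrated, quantitative bound, and this is precisely the nontrivial computation in the paper: one first proves the Airy-scale approximation $|M_n^{(1)}(\xi,\eta)-\alpha K_{\text{Ai}}(\xi,\eta)|\le C a^{-2}$ uniformly on the growing window $\xi,\eta\in[-da^{4/3},da^{4/3}]$ (this is where $a=o(n^{1/10})$ enters, through errors of size $\delta_n^{2/3}$ with $\delta_n=a/n^{1/10}$, rather than your $a^{10/3}/n^{1/3}$, though both give the same threshold), then transfers to the Poisson scale via the exact identity $M_n(\xi+1,\eta+1)=\tilde c(a)M_n^{(1)}(\tilde c(a)\xi,\tilde c(a)\eta)$, and finally bounds the Hilbert--Schmidt norm using $\int_{\R}K_{\text{Ai}}(x,y)^2\,dy=K_{\text{Ai}}(x,x)$, $K_{\text{Ai}}(x,x)=\Ai'(x)^2-x\Ai(x)^2$ and Airy asymptotics, which yields the crucial cancellation $\alpha^2\cdot O\bigl((a^{4/3})^{3/2}\bigr)=O(a^{-2})\to 0$. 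With that bound the conclusion follows from ${\det}(I+B_n)=e^{\operatorname{tr}B_n}{\det}_2(I+B_n)$ and ${\det}_2(I+B_n)\to 1$ when $\|B_n\|_2\to 0$; this replaces your termwise log-expansion (whose signs, incidentally, should all be negative: $\log\det(I-B)=-\sum_{m\ge1}\tfrac1m\operatorname{tr}(B^m)$).

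Two further remarks. Your alternative route via correlation functions, $\rho_k\to\prod_i\sqrt{(1-\xi_i)_+}$ for distinct points, can be made to work, but it too needs more than pointwise convergence: a uniform bound on the rescaled kernel on compacts (e.g.\ $|M_n(\xi,\eta)|\lesssim\sqrt{M_n(\xi,\xi)M_n(\eta,\eta)}$ plus Hadamard) so that factorial moment measures converge, together with determinacy of the Poisson moment problem; none of this is spelled out. Finally, doing the steepest descent directly at the Poisson scale $c(a)n^{1/3}$ is legitimate, but note that the paper's two-step route (Airy scale first, then the exact rescaling relation) is what makes the Hilbert--Schmidt estimate computable in closed form through the Airy kernel identities; if you insist on working directly at the Poisson scale you must still produce an equivalent integrated off-diagonal estimate, not merely pointwise decay.
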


The condition on the allowed growth of $a(n)$ is certainly not optimal
but is an outcome of the proof.
A similar result holds for the thinned Airy kernel point process on
$\mathbb{R}$: if the thinning parameter is sent to zero, the Airy
kernel can be rescaled to a Poisson point process on $\mathbb{R}$
which has a square root drop off. This result actually follows from the
proof of Theorem \ref{airytopoisson}.
Thus we can think of the thinned Airy kernel point process as being an
intermediate kernel between the Airy kernel and the Poisson point
processes with density $\sqrt{(1-\xi)_+}$.

%
\begin{remark}
We could also consider the behavior of the leftmost southern
domino along the northern boundary. For a fixed $a$ we should get
convergence to the last particle
distribution for the thinned Airy kernel point process, $\det(I-\alpha
K_{\Ai})_{L^2(\xi,\infty)}$. When $a$ goes to infinity with $n$, but
not too quickly, we
expect instead get one of the classical extreme value distributions in
the limit, namely the last particle distribution in
a Poisson process with density $\sqrt{(1-\xi)_+}$. We will not give
the technical details that are required to prove these natural
conjectures, but it should be
possible by developing the proof of Theorem \ref{airytopoisson} further.
\end{remark}

\subsection{Bulk fluctuations}\label{sec2.3}\label{subsecBulk}

An account of \emph{local Gibbs measures} for tiling models can be
found in \cite{KOS06} and \cite{Ken09}.

For all doubly periodic bipartite weighted dimer models embedded in the
plane, in \cite{KOS06} the authors found that the dimer model is a
Gibbs measure, gave an explicit method to compute the entries of the
inverse Kasteleyn matrix embedded in the plane and the complete phase
portrait. The results from \cite{KOS06} rely on using the smallest
nonrepeating unit of the graph called the \emph{fundamental domain}.
For the graph considered in this paper, the fundamental domain has one
black vertex and one white vertex. In order to describe the Gibbs
measure, the authors of \cite{KOS06} introduced magnetic coordinates
$(B_x,B_y)$, where one increases the energy by $e^{B_x}$ or $e^{B_y}$
if one passes to the neighboring fundamental domain to the left or
above. Conversely, if one passes to the fundamental domain to the right
or below, one decreases the energy by $e^{B_x}$ or $e^{B_y}$. These
magnetic coordinates are related
to the average slope; that is, one can compute the Gibbs measures for
different slopes; see \cite{KOS06}.

We choose the fundamental domain of the graph embedded in the plane to
be given by a white vertex, an edge in the direction $+e_2$ and its
incident black vertex and the remaining edges incident to these vertices.
To make the following computations and formulas simpler and since the
dimer model is independent of the chosen Kasteleyn orientation, we
choose the Kasteleyn orientation which multiplies the Kasteleyn
orientation given in Section~\ref{subsecTheKasteleynmatrix} by
$(-1)$ at the black vertices $(b_1,b_2)$ where $b_1+b_2\operatorname
{mod}4=3$. Figure~\ref{figfundamentaldomain} shows our choice of
fundamental domain.
%
\begin{figure}

\includegraphics{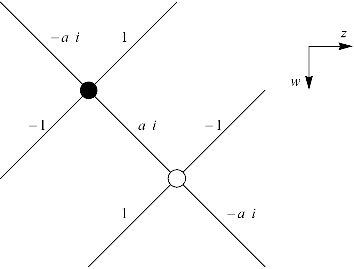}

\caption{The fundamental domain.}\label{figfundamentaldomain}
\end{figure}

We denote the Gibbs measure of the model on this graph by $\mu
_a(B_x,B_y)$ where $(B_x,B_y)$ is described above. Suppose that
$(2\alpha_1+1,2\alpha_2)$ is a white vertex, and $(2\beta_1,2 \beta
_2+1)$ is a black vertex for $\alpha_1, \alpha_2,\beta_1,\beta_2
\in\mathbb{Z}$. Using techniques from \cite{KOS06} one can find the
entries of the inverse of the (infinite) Kasteleyn matrix, denoted by
$K^{-1}_\mu$, and they are given by
%
\begin{eqnarray}\label{Gibbskast}
&& K_\mu^{-1}\bigl((2\alpha_1+1,2
\alpha_2), (2\beta_1,2 \beta_2+1)\bigr)
\nonumber\\[-8pt]\\[-8pt]
&&\qquad  =
\frac
{1}{(2\pi i)^2} \int_{|z|=1} \int_{|w|=1}
\frac{ z^{\alpha_1-\beta
_1} w^{\beta_2-\alpha_2}} { P(z e^{B_x}, w e^{B_y}) } \frac{dw}{w} \frac{dz}{z},\nonumber
\end{eqnarray}
where $P(z,w)$ is the so-called \emph{characteristic polynomial}. For
the above graph embedded in the torus with the above edge weights and
Kasteleyn orientation, the characteristic polynomial is given by
%
\begin{equation}
P(z,w)= a i -z^{-1} +w^{-1} - a i w^{-1}
z^{-1}.
\end{equation}


%
\begin{thma}\label{localGibbsmeasure}
Choose the rescaling so that the white vertices are given by
\[
(x_1,x_2)=\bigl([2\xi_1 n]+ 2
\alpha_1 +1, [2\xi_2 n] +2 \alpha_2\bigr)
\]
and the black vertices are given by
\[
(y_1,y_2)=\bigl([2\xi_1 n] + 2
\beta_1,  [2\xi_2 n] +2 \beta_2+1\bigr)
\]
for $\xi_1, \xi_2 \in\mathcal{D}_c \subset\mathcal{D} $ compact
and for $\alpha_1,\alpha_2, \beta_1,\beta_2 \in\mathbb{Z}$ where
$\mathcal{D}$ is the area bounded by the ellipse in~(\ref{ellipse}).
Then the measure on domino tilings converges weakly to $\mu_a(\log
r_1, \log r_2)$ where $\mu_a$ is defined above and
%
\begin{equation}
\label{bulkdefr} r_i =\sqrt{\xi_i/(1-
\xi_i)}\qquad \mbox{for } i\in\{1,2\}.
\end{equation}
\end{thma}




Similar results for certain classes of lozenge tilings have been
obtained in~\cite{BGR10} and~\cite{Pet12a}.

\subsection{Overview of the paper}\label{sec2.4}\label{subsecOverviewofthePaper}
The rest of the paper is organized as follows. In Section~\ref
{secDiscreteSetting}, we prove Theorem~\ref
{discretecouplingfunction} and Proposition~\ref
{propdimerstoparticles}. In Section~\ref{sec4},
we give the~proofs of Theorems~\ref{asymptoticsthmthinnedairy} and~\ref{airytopoisson}. The proof of Theorem~\ref
{localGibbsmeasure} is given in Section~\ref{secBulk}. We conclude
with a brief discussion of the height function fluctuation in
Section~\ref{conclusion}.\

\section{Discrete setting}\label{sec3}\label{secDiscreteSetting}

\subsection{Proof of Theorem \texorpdfstring{\protect\ref{discretecouplingfunction}}{2.3}}\label{sec3.1}\label{subsecProofofdiscretecouplingfunctiontheorem}

Before giving the proof of Theorem~\ref{discretecouplingfunction}, we
introduce some notation: for $x=(x_1,x_2) \in\mathtt{W}$ and
$y=(y_1,y_2) \in\mathtt{B}$, let
%
\begin{eqnarray}
\qquad c_1(w,z,x,y) &=& (-1)^{(y_1+y_2+x_1+x_2)/4}
\nonumber\\[-8pt]\label{discretec1} \\[-8pt]
&&{}\times
\frac{ w^{y_1/2}
(a+z)^{x_2/2} (a z -1)^{(2n-x_2)/2} }{ z^{(x_1+1)/2} (w-z) (a
w-1)^{(2n+1-y_2)/2} (a+w)^{(y_2+1)/2}},\nonumber
\\
c_2(z,x,y)&=& (-1)^{(y_1+y_2+x_1+x_2)/4}a^{(y_2-x_2-1)/2}
\nonumber\\[-8pt]\label{discretec2} \\[-8pt]
&&{}\times \frac{
z^{(y_2-x_2-1)/2} (1/a +z)^{(y_1-x_1-1)/2}}{ (1/a+a+z)^{(y_2-x_2+1)/2}}\nonumber
\end{eqnarray}
and
%
\begin{eqnarray}\label{discretect2}
\tilde{c}_2(w,x,y) &=& (-1)^{(y_1+y_2+x_1+x_2)/4}
w^{(y_1-x_1-1)/2}
\nonumber\\[-8pt]\\[-8pt]
&&{}\times  (a w-1)^{(y_2-x_2-1)/2} (a+w)^{(x_2-y_2-1)/2}.\nonumber
\end{eqnarray}
We have chosen the three functions above so that
%
\begin{equation}
\label{discretec1tof1} \frac{1}{(2\pi i)^2} \int_{\mathcal{E}_2} \int
_{\mathcal{E}_1} c_1(w,z,x,y) \,dz \,dw=f_1(x,y)
\end{equation}
and
%
\begin{equation}
\label{discretec2tof2} \frac{1}{2 \pi i} \int_{\mathcal{E}_1}
c_2(z,x,y) \,d z = \frac
{1}{2\pi i} \int_{\mathcal{E}_2}
\tilde{c}_2(w,x,y) \,dw =f_2 (x,y),
\end{equation}
where the contours $\mathcal{E}_1$ and $\mathcal{E}_2$ are given in
the statement of Theorem~\ref{discretecouplingfunction}, and the
functions $f_1(x,y)$ and $f_2(x,y)$ are given in equations~(\ref
{functionf1}) and~(\ref{functionf2}), respectively.
Note that $c_2$ is obtained from $\tilde{c}_2$ by the change of
variables $z\mapsto w-1/a$ and
%
\begin{equation}
\label{discretec1toc2} \lim_{z \to w} (w-z) c_1(w,z,x,y)=
\tilde{c}_2(w,x,y).
\end{equation}

\begin{pf*}{Proof of Theorem \ref{discretecouplingfunction}}
For the proof, we set $x=(x_1,x_2)$ and $y=(y_1,y_2)$ with $x,y \in
\mathtt{B}$. We keep the same notation throughout the proof.
The matrix $K^{-1}$ is uniquely determined by the specific choice of
the Kasteleyn matrix, $K$, which means we need to verify the equation
$K \cdot K^{-1} =\mathbb{I}$. We can expand out $K \cdot K^{-1}$
entry-wise. For $x,y \in\mathtt{B}$, we obtain
%
\begin{eqnarray}
\label{leminverseKC} K\cdot K^{-1} (x,y) & =&\sum_{w \in\mathtt{W}}
K (x,w) K^{-1}(w,y)
 = \sum_{w \sim x,w \in\mathtt{W}} K(x,w) K^{-1}(w,y),\hspace*{-30pt}
\end{eqnarray}
where $w \sim x, w \in\mathtt{W}$ means that $w \in\mathtt{W}$ and
$w$ is a nearest neighbored vertex to~$x$. Using the entries of the
Kasteleyn matrix given in~(\ref{kasteleynweights}), we can
rewrite~(\ref{leminverseKC}) and compare with the identity matrix
which gives an entry-wise expansion of the equation $K \cdot K^{-1} =
\mathbb{I}$. This is the equation we must verify to prove Theorem~\ref
{discretecouplingfunction}. That is, we must verify
%
\begin{eqnarray}
\label{discreteverify} &&(-1)^{(x_1+x_2-1)/2} \bigl( K^{-1}(x+e_1,y)
\mathbb{I}_{x_1<2n}-K^{-1}(x-e_1,y)
\mathbb{I}_{x_1>0}\nonumber
\\
&&\hspace*{72pt}{}  - a i K^{-1} (x+e_2,y)\mathbb{I}_{x_1>0}+a
i K^{-1}(x-e_2,y) \mathbb{I}_{x_1<2n} \bigr)
\\
&&\qquad  = \mathbb{I}_{x=y},
\nonumber
\end{eqnarray}
where $x=(x_1,x_2),y \in\mathtt{B}$ and
%
\begin{equation}
\mathbb{I}_{x_1>0} = \cases{ 1, &\quad if $x_1>0$,
\vspace*{3pt}\cr
0, &
\quad otherwise.}
\end{equation}
Note that the indicator functions in~(\ref{discreteverify}) account
for $x$ on the boundary of the Aztec diamond.
In order to verify~(\ref{discreteverify}), there are three cases to
consider for $x=(x_1,x_2)$: $0<x_1<2n$, $x_1=0$ and $x_1=2n$.

For $0<x_1<2n$, the left-hand side of~(\ref{discreteverify}) is equal to
%
\begin{eqnarray}\label{leminverseinterior}
&& (-1)^{(x_1+x_2-1)/2} \bigl( K^{-1}(x+e_1,y)
-K^{-1}(x-e_1,y)
\nonumber\\[-8pt]\\[-8pt]
&&\hspace*{72pt} {} - a i K^{-1}
(x+e_2,y)+a i K^{-1}(x-e_2,y) \bigr).\nonumber
\end{eqnarray}
We first substitute $f_1$ into the above expression. For this
expression, we will manipulate the integrand of $f_1$ which is given by
$c_1$ by~(\ref{discretec1tof1}).
We find after some simplification,
%
\begin{eqnarray}\label{discreteinteriorverify}
&& (-1)^{(x_1+x_2-1)/2} \bigl(c_1(w,z,x+e_1,y)-c_1(w,z,x-e_1,y) \nonumber
\\
&&\hspace*{72pt}{} - a i c_1
(w,z,x+e_2,y)+a i c_1(w,z,x-e_2,y) \bigr)\nonumber
\\
&&\qquad  =(-1)^{(x_1+x_2-1)/2}\nonumber
\\
&&\quad\qquad{} \times  \bigl( c_1\bigl(w,z,(x_1+1,x_2+1),y
\bigr) -c_1\bigl(w,z,(x_1-1,x_2-1),y\bigr)
\\
&&\hspace*{48pt} {} - a i c_1 \bigl(w,z,(x_1-1,x_2+1),y
\bigr)+a i c_1\bigl(w,z,(x_1+1,x_2-1),y\bigr)
\bigr)\nonumber\hspace*{-10pt}
\\
&&\qquad =(-1)^{(x_1+x_2-1)/2}\nonumber
\\
&&\quad\qquad{}\times c_1(w,z,x+e_1,y) \biggl( 1-a z+
\frac{a (-1+a z)}{a+z}+\frac{z (-1+a z)}{a+z} \biggr)
=0.\nonumber
\end{eqnarray}
Note that this relation holds for $0\leq x_1 \leq2n$.
Integrating both sides of the above equation with respect to $z$ and
$w$ over the contours $\mathcal{E}_1$ and $\mathcal{E}_2$,
respectively, and using~(\ref{discretec1tof1}), we find that
%
\begin{eqnarray}\label{discreteinteriorf1}
&& (-1)^{(x_1+x_2-1)/2} \bigl( f_1(x+e_1,y) -f_1(x-e_1,y)
\nonumber\\[-8pt]\\[-8pt]
&&\hspace*{72pt}{}  - a i f_1 (x+e_2,y)+a i f_1(x-e_2,y) \bigr)=0.\nonumber
\end{eqnarray}
To substitute $f_2$ into the expression given in~(\ref
{leminverseinterior}), we have to consider $x_1=y_1$ and $x_1 \geq
y_1+2$ separately due to the split expression of $K^{-1}$.
For $x_2 \geq y_1+2$, all four terms of $f_2$ are present in~(\ref
{leminverseinterior}) and so using~(\ref{discretec1toc2}),~(\ref
{discreteinteriorverify}) and~(\ref{discretec2tof2}) we find
%
\begin{eqnarray}\label{discreteinteriorf2first}
&& -(-1)^{(x_1+x_2-1)/2} \bigl( f_2(x+e_1,y)
-f_2(x-e_1,y)
\nonumber\\[-8pt]\\[-8pt]
&&\hspace*{80pt}{} - a i f_2 (x+e_2,y)+a
i f_2(x-e_2,y) \bigr)=0.\nonumber
\end{eqnarray}
We now substitute $f_2$ into~(\ref{leminverseinterior}) for the case
$x_1=y_1$. We obtain
%
\begin{eqnarray}
&& -(-1)^{(x_1+x_2-1)/2} \bigl( f_2\bigl((x_1+1,x_2+1),(x_1,y_2)
\bigr)
\nonumber\\[-8pt]\\[-8pt]
&&\hspace*{81pt}{} +a i f_2\bigl((x_1+1,x_2-1),(x_1,y_2)
\bigr) \bigr).\nonumber
\end{eqnarray}
We first manipulate the integrand of the above equation using~(\ref
{discretec2tof2}) which gives
%
\begin{eqnarray}
&& - (-1)^{(x_1+x_2-1)/2} \bigl( c_2\bigl(z,(x_1+1,x_2+1),(x_1,y_2)
\bigr)\nonumber
\\
&&\hspace*{81pt}{} +a i c_2\bigl(z,(x_1+1,x_2-1),(x_1,y_2)
\bigr) \bigr)
\nonumber
\\
&&\qquad  =- (-1)^{(x_1+x_2-1)/2}c_2\bigl(z,(x_1+1,x_2+1),(x_1,y_2)
\bigr) \biggl( 1+ \frac{a^3 z}{1+a^2+a z} \biggr)\hspace*{-15pt}
\nonumber
\\[-8pt]
\\[-8pt]
&&\qquad =- (-1)^{(x_1+x_2-1)/2}c_2\bigl(z,(x_1+1,x_2+1),(x_1,y_2)
\bigr) \biggl( \frac{ (1+a^2 ) (1+a z)}{1+a^2+a z} \biggr)\nonumber\hspace*{-15pt}
\\
&&\qquad  = -(-1)^{(3x_1+3x_2+y_1+y_2)/4} a^{(y_2-x_2-2)/2 } \nonumber
\\
&&\hspace*{6pt}\quad\qquad{} \times \bigl(1+a^2\bigr)
z^{(y_2-x_2-2)/2} \biggl(\frac{1}{a} +a+z \biggr)^{(x_2-y_2-2)/2}.
\nonumber
\end{eqnarray}
We now integrate with respect to $z$ over the contour $\mathcal{E}_1$,
and we obtain
%
\begin{eqnarray}
&&-(-1)^{(x_1+x_2-1)/2} \bigl( f_2\bigl((x_1+1,x_2+1),(x_1,y_2)
\bigr)\nonumber
\\
&&\hspace*{81pt}{}  +a i f_2\bigl((x_1+1,x_2-1),(x_1,y_2)\bigr) \bigr)\nonumber
\\
&&\qquad  =-\frac{(-1)^{(3x_1+3x_2+y_1+y_2)/4}} {2\pi i }
\\
&&\hspace*{6pt}\quad\qquad{}\times  \int_{\mathcal
{E}_1}a^{(y_2-x_2-2)/2 }
\bigl(1+a^2\bigr) z^{(y_2-x_2-2)/2} \biggl(\frac{1}{a} +a+z
\biggr)^{(x_2-y_2-2)/2} dz\nonumber\hspace*{-18pt}
\\
&&\qquad  = \cases{ -(-1)^{(x_1+x_2)}, &\quad$x_2=y_2$,
\vspace*{3pt}\cr
0,
&\quad otherwise}\nonumber
\end{eqnarray}
by Lemma~\ref{lemtechnical} below. Because $x_1+x_2$ is always odd,
we conclude
%
\begin{eqnarray}
\label{discreteinteriorf2second}
&& -(-1)^{(x_1+x_2-1)/2} \bigl(
f_2\bigl((x_1+1,x_2+1),(x_1,y_2)
\bigr)
\nonumber\\[-8pt]\\[-8pt]
&&\hspace*{81pt}{} +a i f_2\bigl((x_1+1,x_2-1),(x_1,y_2)
\bigr) \bigr)=\mathbb{I}_{x_2=y_2}.\nonumber
\end{eqnarray}
Note that by our method of computation, the above relation is valid for
$0 \leq x _1 \leq2n$.
This means we have computed~(\ref{leminverseinterior}) for $0 < x_1
<2n$ and so from~(\ref{discreteinteriorf1}), (\ref
{discreteinteriorf2first})~and~(\ref{discreteinteriorf2second}), we
have obtained for $0<x_1 < 2n$,
%
\begin{eqnarray}\label{leminverseinterior1}
&& (-1)^{(x_1+x_2-1)/2} \bigl( K^{-1}(x+e_1,y)
-K^{-1}(x-e_1,y)
\nonumber\\[-8pt]\\[-8pt]
&&\hspace*{73pt}{} - a i K^{-1}
(x+e_2,y)+a i K^{-1}(x-e_2,y) \bigr) =
\mathbb{I}_{x=y}.\nonumber
\end{eqnarray}

For $x=(0,x_2)$, we have that the left-hand side of~(\ref
{discreteverify}) is equal to
%
\begin{equation}
\label{discreteleft} (-1)^{(x_2-1)/2} \bigl( K^{-1}(x+e_1,y)+a
i K^{-1}(x-e_2,y) \bigr).
\end{equation}
Before we substitute $f_1$ into~(\ref{discreteleft}), notice that
%
\begin{equation}
\label{discreteleftf1} f_1\bigl((-1,w_2),(y_1,y_2)
\bigr)=0
\end{equation}
for $w_2\, \operatorname{mod}2 =0$ because there is no residue at $z=0$
in~(\ref{functionf1}) in this case. Since~(\ref
{discreteinteriorverify}) holds for $x$ (including those outside of
the Aztec diamond) which means that the relation in~(\ref
{discreteinteriorf1}) holds for any values of $x$, we can write
out~(\ref{discreteinteriorverify}) with $x_1=0$, integrate over $z$
and $w$ over the contours $\mathcal{E}_1$ and $\mathcal{E}_2$,
respectively, noting~(\ref{discreteleftf1}), and use~(\ref
{discretec1tof1}) to obtain
%
\begin{eqnarray}
\label{discreteleftf1second} &&(-1)^{(x_2-1)/2} \bigl( f_1\bigl
((1,x_2+1),y
\bigr)-f_1\bigl((-1,x_2-1),y\bigr)\nonumber
\\
&&\hspace*{58pt}
{} -a i f_1
\bigl((-1,x_2+1),y\bigr)+ a i f_1\bigl((1,x_2-1),y
\bigr) \bigr)
\\
&&\qquad  =(-1)^{(x_2-1)/2} \bigl( f_1(x+e_1,y)+a i
f_1(x-e_2,y) \bigr)=0
\nonumber
\end{eqnarray}
for $x=(0,x_2)$.
When we substitute $f_2$ into~(\ref{discreteleft}), we only need to
consider the case $y_1=0$ because of the split definition of $K^{-1}$,
and so using~(\ref{discreteinteriorf2second}) (because the equation
is valid for $0 \leq x_1 \leq2n$), we obtain
%
\begin{eqnarray}
\label{discreteleftf2}
&& -(-1)^{(x_2-1)/2} \bigl( f_2\bigl
((1,x_2+1),(0,y_2)
\bigr)
\nonumber\\[-8pt]\\[-8pt]
&&\hspace*{67pt} {}+a i f_2\bigl((1,x_2-1),(0,y_2)\bigr)
\bigr)=\mathbb{I}_{x_2=y_2}.\nonumber
\end{eqnarray}
Adding~(\ref{discreteleftf1second}) and~(\ref{discreteleftf2}), we find
%
\begin{equation}
\label{discreteleft1} (-1)^{(x_2-1)/2} \bigl( K^{-1}(x+e_1,y)+a
i K^{-1}(x-e_2,y) \bigr)=\mathbb{I}_{x=y}
\end{equation}
for $x=(0,x_2)$.

For $x=(2n,x_2)$, we have that the left-hand side of~(\ref
{discreteverify}) is equal to
%
\begin{equation}
\label{discreteright} (-1)^{(2n+x_2-1)/2} \bigl(- K^{-1} (
x-e_1,y) -a i K^{-1}(x+e_2,y) \bigr).
\end{equation}
For $x=(2n,x_2)$, we perform the following computation:
%
\begin{eqnarray}
\label{discreterightf1tof2} &&(-1)^{(2n+x_2-1)/2} \bigl( -
f_1\bigl((2n-1,x_2-1),y
\bigr) -a i f_1\bigl((2n-1,x_2+1),y\bigr) \bigr)
\nonumber
\\
&&\qquad =-\frac{(-1)^{(2n+x_2-1)/2}}{(2\pi i)^2}\int_{\mathcal{E}_2} \int
_{\mathcal{E}_1}
c_1\bigl(w,z,(2n-1,x_2-1),y\bigr)\nonumber
\\
&&\hspace*{159pt}{} + a i c_1 \bigl(w,z,(2n-1,x_2+1),y\bigr) \,dz \,dw
\\
&&\qquad  =-\frac{(-1)^{(2n+x_2-1)/2}}{(2\pi i)^2}\nonumber
\\
&&\hspace*{6pt}\quad\qquad{}\times \int_{\mathcal{E}_2} \int
_{\mathcal{E}_1}c_1\bigl(w,z,(2n-1,x_2-1),y\bigr) \biggl(1-
\frac{a (a+z)}{a z-1} \biggr) \,dz \,dw\nonumber
\\
&&\qquad  = -\frac{(-1)^{(2n+x_2-1)/2}}{(2\pi i)^2}\nonumber
\\
&&\hspace*{6pt}\quad\qquad{}\times  \int_{\mathcal
{E}_2} \int_{\mathcal{E}_1}
c_1\bigl(w,z,(2n-1,x_2-1),y\bigr) \biggl(
\frac{1+ a^2}{ a z-1} \biggr) \,dz \,dw\nonumber
\\
&&\qquad  =- \frac{(-1)^{(2n+x_2-1)/2}}{(2\pi i)^2}\int_{\mathcal
{E}_2} \tilde{c}_2
\bigl(w,(2n-1,x_2-1),y\bigr) \frac{1+a^2}{1-a w} \,dw
\nonumber
\\
&&\qquad  =-(-1)^{(2n+x_2-1)/2} \bigl( f_2\bigl((2n-1,x_2-1),y
\bigr) +a i f_2\bigl((2n-1,x_2+1),y\bigr) \bigr),\nonumber\hspace*{-8pt}
\end{eqnarray}
where the fourth line to the fifth line follows from the fact that the
integrand in the fourth line is a polynomial of degree $n-1$ in the
numerator and a polynomial of degree $n+1$ in the denominator with
respect to $z$, and so we can push the contour through infinity which
picks up a residue at $z=w$. The sixth line follows from the fifth line because
%
\begin{eqnarray}
&&\tilde{c}_2\bigl(w,(2n-1,x_2-1),y\bigr) +a i \tilde{c}_2\bigl(w,(2n-1,x_2+1),y\bigr)
\nonumber
\\
&&\qquad  =\tilde{c}_2\bigl(w,(2n-1,x_2-1),y\bigr) \biggl(1-
\frac{a (a+w)}{-1+a w} \biggr)
\\
&&\qquad  = \tilde{c}_2\bigl(w,(2n-1,x_2-1),y\bigr)
\frac{1+a^2}{1-a w}
\nonumber
\end{eqnarray}
and integrating over $\mathcal{E}_2$ using~(\ref{discretec2tof2}).
For $x_1=2n$ and $y_1<2n$, we have that~(\ref{discreteright}) is
equal to
%
\begin{eqnarray}
\label{discreteright2}
&& -(-1)^{(2n+x_2-1)/2}\bigl(
f_1(x-e_1,y)-f_2(x-e_1,y)
\nonumber\\[-8pt]\\[-8pt]
&&\hspace*{82pt}
{}+ai \bigl(f_1(x+e_2,y)-f_2(x+e_2,y)
\bigr)\bigr)=0\nonumber
\end{eqnarray}
by~(\ref{discreterightf1tof2}). For $x=(2n,x_2)$ and $y_1=2n$,
using~(\ref{discreterightf1tof2}),~(\ref{discreteright}) is equal to
%
\begin{eqnarray}
\label{discreteright3}
&& -(-1)^{(2n+x_2-1)/2}\bigl(f_1(x-e_1,y)+aif_1(x+e_2,y)
\bigr)\nonumber
\\
&&\qquad  = -(-1)^{(2n+x_2-1)/2}\bigl( f_2(x-e_1,y)+aif_2(x+e_2,y)
\bigr)
\nonumber\\[-8pt]\\[-8pt]
&&\qquad  =(-1)^{(2n+x_2-1)/2}\bigl( f_2(x+e_1,y)+aif_2(x-e_2,y)
\bigr)\nonumber
\\
&&\qquad  =\mathbb{I}_{x=y}\nonumber
\end{eqnarray}
for $y=(2n,y_2)$ by using~(\ref{discreteinteriorf2first}) and~(\ref
{discreteinteriorf2second}). From~(\ref{discreteright2}) and~(\ref
{discreteright3}), we have evaluated~(\ref{discreteright}) and have found
%
\begin{equation}
(-1)^{(2n+x_2-1)/2} \bigl(- K^{-1} ( x-e_1,y) -a i
K^{-1}(x+e_2,y) \bigr)=\mathbb{I}_{x=y}
\label{discreterightfinal}
\end{equation}
for $x=(2n,x_2)$ and $y=(y_1,y_2)$.
Equations~(\ref{leminverseinterior1}),~(\ref{discreteleft1})
and~(\ref{discreterightfinal}) means that we have verified~(\ref
{discreteverify}).
\end{pf*}

%
\begin{lemma} \label{lemtechnical}
For $k \in{\mathbb{Z}}$,
%
\begin{eqnarray}
\label{lemintegralstatement} \qquad\frac{1}{2\pi i} \int_{|z|=1}
z^{k-1} \biggl( \frac{1}{a} +a + z \biggr)^{-1-k} \,dz =
\cases{ \displaystyle\frac{a}{1+a^2}, &\quad$k=0$,
\vspace*{3pt}\cr
0, &\quad
otherwise.}
\end{eqnarray}
\end{lemma}

\begin{pf}
For $k=0$, the left-hand side of (\ref{lemintegralstatement}) is
equal to
%
\begin{equation}
\frac{1}{2\pi i }\int_{|z|=1} \frac{1}{z((1/a)+a+z)}\,dz =
\frac{a}{1+a^2}.
\end{equation}
When $k>0$, the integrand in (\ref{lemintegralstatement}) is
analytic at $z=0$ and so the left-hand side of (\ref
{lemintegralstatement}) is zero. When $k<0$, we can move the contour
to a small circle around $-(a+1/a)$ and use the fact that the integrand
is analytic inside.
\end{pf}

\subsection{Guessing $K^{-1}$}\label{sec3.2}

As mentioned above in~\cite{Joh05}, the author used a particle system
formed from the zig-zag particles and obtained a formula for the
correlation kernel. From this correlation kernel, we could guess an
expression for the inverse Kasteleyn matrix which is verified to be
correct in the previous section. Here, we describe the steps we used to
obtain the guess.

Let $w \in\mathtt{W}$ and $b \in\mathtt{B}$. Recall that there is a
blue particle at $w$ if and only if a dimer covers $(w+e_1,w)$ or
$(w+e_2,w)$ and that there is a red particle at $b$ if and only if a
dimer covers $(b,b-e_1)$ or $(b,b-e_2)$. From~(\ref{kasteleynweights})
we see that if $w=(x_1,x_2)$, then
%
\begin{eqnarray}\label{dimerstoparticles4}
K(w+e_1,w)&=&(-1)^{(x_1+x_2-1)/2},
\nonumber\\[-8pt]\\[-8pt]
K(w+e_2,w)&=&(-1)^{(x_1+x_2-1)/2} a i\nonumber
\end{eqnarray}
and if $b=(y_1,y_2)$, then
%
\begin{eqnarray}\label{dimerstoparticles5}
K(b,b-e_1)&=&(-1)^{(y_1+y_2+1)/2},
\nonumber\\[-8pt]\\[-8pt]
K(b,b-e_2)&=&-(-1)^{(y_1+y_2+1)/2} a i.\nonumber
\end{eqnarray}
It follows from Theorem~\ref{localstatisticsthm} that
%
\begin{eqnarray}
\label{dimerstoparticles6}
&&\mathbb{P}[\mbox{There are particles at } w \mbox{ and } b]\nonumber
\\
&&\qquad  = \sum_{r_1,r_2=1}^2 K (w+e_{r_1},w
) K ( b,b-e_{r_2} )\nonumber
\\
&&\hspace*{60pt}{}\times  \left|\matrix{K^{-1}(w,w+e_{r_1})
& K^{-1}(w,b)
\vspace*{3pt}\cr
K^{-1}(b-e_{r_2},
w+e_{r_1}) & K^{-1}(b-e_{r_2},b) }\right|
\\
&&\qquad  =\left|\matrix{
\displaystyle \sum_{r=1}^2 K^{-1}(w,w+e_r) K(w+e_r,w)
\vspace*{3pt}\cr
\displaystyle \sum_{r_1,r_2=1}^2 K^{-1}(b-e_{r_2},w+e_{r_1})K(w+e_{r_1},w) K(b,b-e_{r_2} )} 
\right.\nonumber
\\[3pt]
&&\hspace*{148pt} \left. \matrix{
K^{-1}(w,b)
\vspace*{3pt}\cr
\displaystyle\sum_{r=1}^2 K^{-1} (b-e_r,b)K(b,b-e_r) } \right|.\nonumber
\end{eqnarray}
In~(\ref{dimerstoparticles6}), we have $w=(x_1,x_2)$, $b=(y_1,y_2)$
and if $(v_1,v_2)$, the particle coordinates, are related to
$(x_1,x_2)$ by~(\ref{dimerstoparticles1}) and $(v_1,v_2)$ in the same
way to $(y_1,y_2)$, then we get, using the particle kernel~(\ref
{dimerstoparticlestwo}) and the result in~\cite{Joh05} that
%
\begin{eqnarray}\label{dimerstoparticles7}
&& \mathbb{P}[\mbox{There are particles at } w
\mbox{ and } b]
\nonumber\\[6pt]\\[-18pt]
&&\qquad =
\left|\matrix{ K_n(u_1,u_2;u_1,u_2)
& K_n(u_1,u_2;v_1,v_2)
\vspace*{5pt}\cr
K_n(v_1,v_2l u_1,u_2)
& K_n(v_1,v_2l v_1,
v_2) }\right|.\nonumber
\end{eqnarray}
Comparing~(\ref{dimerstoparticles6}) and~(\ref
{dimerstoparticles7}), we see that it is reasonable to expect that
%
\begin{eqnarray}
&& K^{-1}\bigl((x_1,x_2),(y_1,y_2)
\bigr)
\nonumber\\[-8pt]\\[-8pt]
&&\qquad  = c(x_1,x_2;y_1,y_2)
K_n(u_1,u_2;v_1,v_2)\nonumber
\end{eqnarray}
or
%
\begin{eqnarray}\label{dimerstoparticles9}
&& K^{-1}\bigl((x_1,x_2),(y_1,y_2)
\bigr)
\nonumber\\[-8pt]\\[-8pt]
&&\qquad  = c(x_1,x_2;y_1,y_2)
K_n(v_1,v_2;u_1,u_2)\nonumber
\end{eqnarray}
with some appropriate, hopefully simple function $c$ which could
perhaps be just a sign factor. Here, one has to make some guesses and
it turns out, a posteriori, that~(\ref{dimerstoparticles9}) is the
right choice and that
%
\begin{equation}
c(x_1,x_2;y_1,y_2)=-(-1)^{(x_1-x_2+y_1-y_2+2)/4}
\end{equation}
for our choice of Kasteleyn orientation. Thus we write
%
\begin{eqnarray}
\qquad && K^{-1} \bigl((x_1,x_2),(y_1,y_2)
\bigr)
\nonumber\\[-8pt]\\[-8pt]
&&\qquad =-(-1)^{(x_1-x_2+y_1-y_2+2)/4} K_n \biggl( y_2,
\frac{y_2-y_1+1}{2};  x_2, \frac{x_2-x_1+1}{2} \biggr).\nonumber
\end{eqnarray}

\subsection{Proof of Proposition~\texorpdfstring{\protect\ref{propdimerstoparticles}}{2.4}}\label{sec3.3}
We will show that the right-hand side of~(\ref
{propeqndimerstoparticles}) gives the corresponding entry of the
inverse Kasteleyn matrix.

Write $y_2=2r-1$ and $x_2=2s$. From~(\ref{dimerstoparticlesKtilde}),
we see that
%
\begin{eqnarray}
&&\widetilde{K}_n \biggl( y_2,  \frac{y_2-y_1+1}{2};
x_2, \frac{x_2-x_1+1}{2} \biggr)\nonumber
\\
&&\qquad  =\frac{1}{(2\pi i)^2}\nonumber
\\
&&\quad\qquad{}\times  \int_{\gamma_{r_1}} \frac{dw}{w} \int
_{\gamma_{r_2}} \frac{dz}{z} \frac{ z^{(x_2-x_1+1)/2} (1-a
z)^{n-(x_2/2)} (1+a/z)^{(x_2)/2}} { w^{(y_2-y_1+1)/2} (1-a w)^{n-((y_2+1)/2) +1} (1+a/w)^{(y_2+1)/2}}\hspace*{-30pt}\nonumber
\\
&&\hspace*{113pt}{}\times  \frac{w}{w-z}\nonumber
\\
&&\qquad  =\frac{(-1)^{(y_2-x_2-1)/2}}{(2\pi i )^2}
\nonumber\\[-8pt]\\[-8pt]
&&\quad\qquad{}\times  \int_{\gamma
_{r_1}} \frac{dw}{w} \int
_{\gamma_{r_2}} \frac{dz}{z} \frac{
w^{(y_1)/2} (az-1)^{(2n-x_2)/2} (z+a)^{(x_2)/2}} {
z^{(x_1+1)/2} (a w-1)^{((2n-y_2+1)/2) +1} (w+a)^{(y_2+1)/2}}\nonumber
\\
&&\hspace*{113pt}{}\times  \frac{1}{w-z}\nonumber
\\
&&\qquad  =\frac{(-1)^{(y_2-x_2-1)/2}}{(2\pi i )^2}\nonumber
\\
&&\quad\qquad{} \times  \int_{\mathcal
{E}_2} \frac{dw}{w} \int
_{\mathcal{E}_1} \frac{dz}{z} \frac{
w^{(y_1)/2} (az-1)^{(2n-x_2)/2} (z+a)^{(x_2)/2}} {
z^{(x_1+1)/2} (a w-1)^{((2n-y_2+1)/2) +1} (w+a)^{(y_2+1)/2}}\nonumber
\\
&&\hspace*{106pt}{}\times  \frac{1}{w-z},\nonumber
\end{eqnarray}
where the last equality follows by deforming $\gamma_{r_2}$ to
$\mathcal{E}_2$ through infinity. Hence, we obtain
%
\begin{eqnarray}
&& -(-1)^{(x_1-x_2+y_1-y_2+2)/4} \widetilde{K}_n \biggl( y_2,
\frac{y_2-y_1+1}{2};  x_2, \frac{x_2-x_1+1}{2} \biggr)
\nonumber\\[-8pt]\\[-8pt]
&&\qquad =f_1(x,y).\nonumber
\end{eqnarray}
Also, by~(\ref{dimerstoparticlesfour}) we have
%
\begin{eqnarray}
\qquad && \phi_{y_2,x_2} \biggl( \frac{y_2-y_1+1}{2},  \frac{x_2-x_1+1}{2} \biggr)\nonumber
\\
&&\qquad =
\frac{ \mathbb{I}_{y_2<x_2}}{2\pi i} \int_{\gamma_{r_1}} z^{((x_2-x_1)/2) -((y_2-y_1)/2)}
\frac{ (1-az)^{((y_2+1-x_2)/2)-1}}{(1+a/z)^{(y_2+1-x_2)/2}} \frac{dz}{z}\nonumber
\\
&&\qquad  =(-1)^{(y_2-x_2-1)/2} \frac{ \mathbb{I}_{y_2<x_2}}{2\pi i} \int
_{\gamma_{r_1}}z^{(y_1-x_1-1)/2} \frac{ (az-1)^{(y_2-x_2-1)/2}}{(z+a)^{(y_2-x_2+1)/2}} \,dz
\\
&&\qquad = (-1)^{(y_2-x_2-1)/2} \frac{ \mathbb{I}_{y_2<x_2}\mathbb
{I}_{y_1<x_1}}{2\pi i}\nonumber
\\
&&\qquad\quad{}\times  \int_{\gamma_{r_1}}
z^{(y_1-x_1-1)/2} \frac{ (az-1)^{(y_2-x_2-1)/2}}{(z+a)^{(y_2-x_2+1)/2}} \,dz\nonumber
\\
&&\qquad  = (-1)^{(y_2-x_2-1)/2} \frac{ \mathbb{I}_{y_2<x_2}\mathbb
{I}_{y_1<x_1}}{2\pi i}\nonumber
\\
&&\qquad\quad{}\times  \int_{\mathcal{E}_2}
z^{(y_1-x_1-1)/2} \frac{ (az-1)^{(y_2-x_2-1)/2}}{(z+a)^{(y_2-x_2+1)/2}} \,dz\nonumber
\\
&&\qquad  = (-1)^{(y_2-x_2-1)/2} \frac{\mathbb{I}_{y_1<x_1}}{2\pi i}
a^{(y_2-x_2-1)/2}\nonumber
\\
&&\qquad\quad{}\times \int _{\mathcal{E}_1} z^{(y_2-x_2-1)/2} \frac{ (1/a+z)^{(y_1-x_1-1)/2}}{(z+a+1/a)^{(y_2-x_2+1)/2}} \,dz. \nonumber
\end{eqnarray}
In the third equality, we use the fact that the integrand has no
singularity inside $\gamma_1$ if $y_1>x_1$ (and $y_1=x_1$ is not
possible). The fourth equality follows by deforming $\gamma_1$ to
$\mathcal{E}_2$ through infinity. The last equality follows since the
integrand has no singularity inside $\mathcal{E}_2$ if $y_2>x_2$ and
by making the shift $z \mapsto z+1/a$. We see that
%
\begin{eqnarray}
&& -(-1)^{(x_1-x_2+y_1-y_2+2)/4} \phi_{y_2,x_2} \biggl( \frac
{y_2-y_1+1}{2},
\frac{x_2-x_1+1}{2} \biggr)
\nonumber\\[-8pt]\\[-8pt]
&&\qquad  = \mathbb{I}_{x_1>y_1} f_2(x,y).\nonumber
\end{eqnarray}

\section{Asymptotics of dimers}\label{sec4}\label{secAsymptoticsofdimers}

In this section, we will give the proofs of the results on the local
asymptotics of the Aztec diamond. We start by proving Proposition \ref
{thinnedthickprop} about thinned and thickened determinantal point processes.

\begin{pf*}{Proof of Proposition \ref{thinnedthickprop}}
Let $\{y_j\}$ be the points of the determinantal point process with
kernel $K$, and let $\{n_j\}$ be independent Bernoulli random variables,
$\mathbb{P}[n_j=1]=\alpha$.
Let $\mathbb{E}_K$ denote the expectation for the determinantal point
process and $\mathbb{E}_n$ the expectation with respect to the
Bernoulli random
variables. Consider the thinned process. Then, by Fubini's theorem,
%
\begin{eqnarray}
\mathbb{E} \bigl[e^{-\sum_j\psi(x_j)} \bigr]& =&\mathbb{E}_n
\mathbb{E}_K \bigl[e^{-\sum_jn_j\psi(y_j)} \bigr]\nonumber
\\
&=&  \mathbb{E}_K
\mathbb{E}_n \biggl[\prod_{j}\bigl(1-
\bigl(1-e^{-n_j\psi
(y_j)}\bigr)\bigr) \biggr]\nonumber
\\
& =&\mathbb{E}_K \biggl[\prod_{j}
\bigl(1-\mathbb{E}_n\bigl[1-e^{-n_j\psi
(y_j)}\bigr]\bigr) \biggr]=
\mathbb{E}_K \biggl[\prod_{j}\bigl(1-
\alpha\phi(y_j)\bigr) \biggr]
\\
& =&\det(I-\phi\alpha K\mathbb{I}_A),
\end{eqnarray}
which proves (\ref{thinnedexpectation}). Next, let $\{m_j\}$ be
independent geometric random variables, $\mathbb{P}[m_j=k]=(1-\beta
)\beta^{k-1}$,
$k\ge1$, and let $\mathbb{E}_m$ denote the expectation with respect
to these random variables. Then
%
\begin{eqnarray}
\mathbb{E} \bigl[e^{-\sum_j\psi(x_j)} \bigr]& =&\mathbb{E}_m
\mathbb{E}_K \bigl[e^{-\sum_jm_j\psi(y_j)} \bigr]\nonumber
\\
&=& \mathbb{E}_K
\mathbb{E}_m \biggl[\prod_{j}\bigl(1-
\bigl(1-e^{-m_j\psi
(y_j)}\bigr)\bigr) \biggr]
\nonumber\\[-8pt]\\[-8pt]
& =&\mathbb{E}_K \biggl[\prod_{j}
\biggl(1-\frac{\phi(y_j)}{1-\beta
+\beta\phi(y_j)} \biggr) \biggr]\nonumber
\\
& =& \det\biggl(I-\frac{\phi}{1-\beta
+\beta\phi}K
\mathbb{I}_A \biggr),\nonumber
\end{eqnarray}
since
\begin{eqnarray}
\mathbb{E}_m \bigl[1-e^{-m_j\psi(y_j)} \bigr]& =&1-(1-\beta)
\sum_{k=1}^\infty\beta^{k-1}e^{-k\psi(y_j)}\nonumber
\\
& =&\frac{1-e^{-\psi(y_j)}}{1-\beta e^{-\psi(y_j)}}
\\
&=& \frac{\phi
(y_j)}{1-\beta+\beta\phi(y_j)}.\nonumber
\end{eqnarray}
This proves (\ref{thickenedexpectation}).\vadjust{\goodbreak}
\end{pf*}


\begin{pf*}{Proof of Theorem \ref{asymptoticsthmthinnedairy}}
By Lemma \ref{lemmakernelofsoutherndominoprocess} the south
domino process on the line $y=r$ is a determinantal point process with
kernel $L$ given by (\ref{asymptoticskernel}). Let us first consider
this process in a neighbourhood of the northern boundary, when
$r=[(1-k^2u(k))n]$, $k>0$. (Below we will often neglect the integer
part in this and in other expressions.
It is not difficult to see that this is unimportant.)
The kernel can be written
%
\begin{eqnarray}\label{L-kernel}
\qquad && L(x_1,x_2)
\nonumber\\[-8pt]\\[-8pt]
&&\qquad =-\frac{1}{(2\pi i)^2}\int
_{\Gamma_1}dz\int_{\Gamma
_2}dw\,\frac{w^{x_2-u(k)n}}{z^{x_1-u(k)n}}
\frac{1}{(a+w)(w-z)}e^{ng(z)-ng(w)},\nonumber
\end{eqnarray}
where
%
\begin{equation}
\label{gz} g(z)=\bigl(1-k^2u(k)\bigr)\log(a+z)+k^2u(k)
\log(az-1)-u(k)\log z.
\end{equation}
We have deformed the contours $\mathcal{E}_1$ and $\mathcal{E}_2$ to
new contours $\Gamma_1$ and $\Gamma_2$,
described below, which are good contours for the asymptotic analysis.
The argument in the logarithms is chosen in the interval $(0,2\pi)$.
We see that when $u(k)$ is given by~(\ref{asymptoticsuk}), $g(z)$
has a double zero at
%
\begin{equation}
\label{zc} z_c=\frac{1}{a+k\sqrt{1+a^2}}.
\end{equation}
We can now use a saddle-point argument to analyze the relevant
asymptotics of~(\ref{L-kernel}), and since this is a fairly standard
Airy kernel asymptotics
saddle point analysis, we will not go into all the details. For the
integration contours in (\ref{L-kernel}) we have chosen the steepest
descent contours given by the
level lines of the imaginary part of $g(z)$ starting
at $z_c$. It can be seen that we will have two ascending contours for
the real part of $g(z)$ which will leave in the directions $e^{\pm\pi
i/3}$ and
go to infinity. We can deform the contour $\mathcal{E}_2$ to a contour
$\Gamma_2$ consisting of these two pieces. We will have two descending
contours going from $z_c$ to $-a$ leaving in the directions
$e^{\pm2\pi i/3}$, and these can be combined into a contour $\Gamma
_1$; see Figure~\ref{figcontoursairy}.
If we have the scalings
%
\begin{eqnarray}
\label{x1x2} x_1&=&\bigl[u(k)n-\lambda n^{1/3}\xi\bigr],
\qquad
x_2 =\bigl[u(k)n-\lambda n^{1/3}\eta\bigr],
\end{eqnarray}
%
%
\begin{figure}[t]

\includegraphics{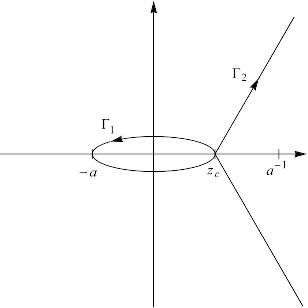}

\caption{A schematic diagram of the
contours of steepest ascent and descent for $g(z)$ for $z_c \in(0,1/a)$.}
\label{figcontoursairy}
\end{figure}%
then
%
\begin{eqnarray}
&& \lim_{n\to\infty} -\lambda n^{1/3}z_c^{x_1-x_2}
L(x_1,x_2)
\nonumber\\[-8pt] \label{scalinglimitL} \\[-8pt]
&&\qquad  = \alpha\frac{1}{(2\pi i)^2}\int
_{\Gamma} dz\int_{\Gamma} dw\,
\frac{1}{i(z+w)}e^{iz^3/3+i\xi z+iw^3/3+i\eta w}\nonumber
\\
&&\qquad  =\alpha K_{\Ai}(\xi,\eta)
\end{eqnarray}
uniformly for $\xi,\eta$ in a compact subset of $\mathbb{R}$, where
$\alpha=z_c/(z_c+a)$. Here $\Gamma$ is given by
$z(t)=-te^{(\pi-\theta)i}$, $t<0$ and $z(t)=te^{i\theta}$, $t\ge0$,
with a fixed $0<\theta<\pi/3$. Furthermore, $\lambda$ is given by
%
\begin{equation}
\label{lambda2} \lambda=z_c\bigl(-g^{(3)}(z_c)/2
\bigr)^{1/3}.
\end{equation}
A computation gives (\ref{lambda}).
To prove the result in Theorem \ref{asymptoticsthmthinnedairy} we
observe that, with $\phi=1-e^{-\psi}$ and $[n]=\{1,\ldots,n\}$,
%
\begin{eqnarray}
\qquad && \mathbb{E} \bigl[e^{-\sum_{j}\psi(\xi_j)} \bigr]
\nonumber\\[-8pt]\label{genfcn} \\[-8pt]
&&\qquad  = \mathbb
{E} \biggl[\prod
_j \biggl(1-\phi\biggl(\frac{nu(k)-x_j}{\lambda n^{1/3}} \biggr)
\biggr) \biggr]\nonumber
\\
&&\qquad  =\sum_{m=0}^n\frac{(-1)^m}{m!}\sum
_{x_1,\ldots,x_m\in[n]}\prod_{j=1}^m
\phi\biggl(\frac{nu(k)-x_j}{\lambda n^{1/3}} \biggr)\det\bigl(L(x_i,x_j)
\bigr)_{m\times m}.
\end{eqnarray}
Using the uniform convergence in (\ref{scalinglimitL}) and Hadamard's
inequality, we see that (\ref{genfcn}) converges to
%
\begin{eqnarray}
&& \sum_{m=0}^n\frac{(-1)^m}{m!}\int
_{\mathbb{R}^m}\prod_{j=1}^m
\phi(\xi_j)\det\bigl(\alpha K_{\Ai}(\xi_i,
\xi_j) \bigr)_{m\times m} d^m\xi
\nonumber\\[-8pt]\\[-8pt]
&&\qquad =\det(I-\phi\alpha
K_{\Ai}\mathbb{I}_A ),\nonumber
\end{eqnarray}
for every $\psi\in C_c^+(\mathbb{R})$, where $A=\operatorname
{supp}\phi=\operatorname{supp}\psi$; see, for example, \cite
{Joh05}. This proves the weak convergence claimed in the theorem; see,
for example,
\cite{DaleyVereJonesII}, page~138.

We turn now to the south domino process close to the southern boundary.
Take $r$ as before but with $k\in(-a^{-1}(1+a^2)^{1/2},-a(1+a^2)^{-1/2})$.
By Lemma \ref{lemmakernelofsoutherndominoprocess}
%
\begin{eqnarray}
\qquad\quad && L(x_1,x_2)
=-\frac{1}{(2\pi i)^2}\int_{\mathcal{E}_1}
dz\int_{\mathcal{E}_2} dw\,\frac{w^{x_2}}{z^{x_1}}\frac
{(a+z)^r(az-1)^{n-r}}{(a+w)^{r+1}(aw-1)^{n-r}}
\frac{1}{w-z}.
\end{eqnarray}
Deform $\mathcal{E}_2$ through infinity to a contour $\gamma_2$
containing $\mathcal{E}_1$ and $-a$ in its interior, but $1/a$
outside, to obtain
%
\begin{equation}
\qquad L(x_1,x_2)=\frac{1}{(2\pi i)^2}\int_{\mathcal{E}_1}
dz\int_{\gamma
_2} dw\,\frac{w^{x_2}}{z^{x_1}}\frac
{(a+z)^r(az-1)^{n-r}}{(a+w)^{r+1}(aw-1)^{n-r}}
\frac{1}{w-z}.
\end{equation}
%
\begin{figure}

\includegraphics{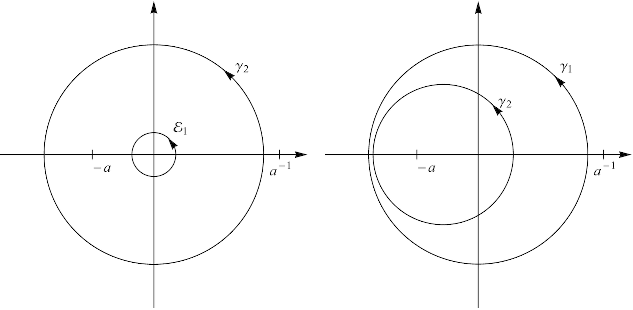}

\caption{The figure on the left represents deforming the contour
$\mathcal{E}_2$ to $\gamma_2$. The figure on the right represents
deforming the contour $\mathcal{E}_1$ through $\gamma_2$ to $\gamma
_1$. Note that this picks up a single integral term.}
\label{figcontourswitch}
\end{figure}%
Then move $\mathcal{E}_1$ to a contour $\gamma_1$ which surrounds
$\gamma_2$; see Figure~\ref{figcontourswitch}. This picks up a
contribution from the pole at $z=w$. Write
%
\begin{equation}
\label{Ltilde}
\qquad \widetilde{L}(x_1,x_2)=\frac{z_c^{x_1-x_2}}{(2\pi i)^2}\int
_{\gamma_1} dz\int_{\gamma_2} dw\,\frac{w^{x_2}}{z^{x_1}}
\frac
{(a+z)^r(az-1)^{n-r}}{(a+w)^{r+1}(aw-1)^{n-r}}\frac{1}{w-z}.
\end{equation}
Here $z_c$ is again given by (\ref{zc}). Note that now we have
$z_c\in(-\infty,-a)$. Thus we find
%
\begin{equation}
\label{Lrelation} z_c^{x_1-x_2}L(x_1,x_2)=
\frac{z_c^{x_1-x_2}}{2\pi i}\int_{\gamma
_1}\frac{z^{x_2-x_1}}{z+a} \,dz+
\widetilde{L}(x_1,x_2).
\end{equation}
The parameter $\beta$ of the thickened process is given by $\beta
=-a/z_c$, and we see that $0<\beta<1$. Set
%
\begin{equation}
M(x_1,x_2)=-\beta^{x_2-x_1}\mathbb{I}_{x_1<x_2}.
\end{equation}
Then (\ref{Lrelation}) gives
%
\begin{equation}
z_c^{x_1-x_2}L(x_1,x_2)=
\delta_{x_1,x_2}-M(x_1,x_2)+\widetilde{L}(x_1,x_2).
\end{equation}
A correlation kernel $L^\ast$ for the dual point process is given by
\cite{Bor02}, Proposition~4,
%
\begin{equation}
\label{Lstar} L^{\ast}(x_1,x_2)=\delta
_{x_1,x_2}-z_c^{x_1-x_2}L(x_1,x_2)=M(x_1,x_2)-
\widetilde{L}(x_1,x_2).
\end{equation}
If $x_1,x_2,\ldots$ are the points in the dual point process, we want
to look at
%
\begin{equation}
\mathbb{E} \biggl[\prod_j\exp\biggl(-\sum
_j\psi\biggl(\frac
{x_j-nu(k)}{\lambda n^{1/3}} \biggr) \biggr)
\biggr]=\det\bigl(I-fL^\ast g\bigr),
\end{equation}
where the Fredholm determinant is on $\ell^2(\{1,\ldots,n\})$ and so
is actually a determinant of a finite matrix.
Here we have introduced $f(x)=\phi((\lambda n^{1/3})^{-1}(x-u(k)n))$
and $g(x)=\mathbb{I}_A((\lambda n^{1/3})^{-1}(x-u(k)n))$, where $A$ is
the support of $\phi$.
Now, by~(\ref{Lstar}), we have
%
\begin{eqnarray}
\label{Fredholmdeterminantcomputation}
\det\bigl(I-fL^\ast g\bigr)& =&\det
(I-fMg+f\widetilde{L}g)
\\
& =&\det(I-fMg)\det\bigl(I+(I-fMg)^{-1}f\widetilde{L}\bigr)
\nonumber\\[-8pt]\\[-8pt]
&=& \det\Biggl(I+
\sum_{j=0}^n(fMg)^jf\widetilde{L}g
\Biggr)\nonumber
\\
& =&\det\Biggl(I+f\sum_{j=0}^n(Mf)^j
\widetilde{L}g \Biggr).
\end{eqnarray}
Here we have used that $gf=f$ and $fMg$ is nilpotent, $(fMg)^{n+1}=0$.
Hence, we also have $\det(I-fMg)=1$. Set
%
\begin{equation}
R=\sum_{j=1}^n(Mf)^j
\widetilde{L}.
\end{equation}
Then, by (\ref{Fredholmdeterminantcomputation}), we have
%
\begin{equation}
\label{Fredholmformula} \det\bigl(I-fL^\ast g\bigr)=\det\bigl
(I+f(R+\widetilde{L})g
\bigr).
\end{equation}
In order to prove the result in Theorem \ref
{asymptoticsthmthinnedairy} it suffices to show that, with the scaling
%
\begin{equation}
\label{Thickscaling} x_1=\bigl[u(k)n+\lambda n^{1/3}\xi\bigr],
\qquad x_2=\bigl[u(k)n+\lambda n^{1/3}\eta\bigr]
\end{equation}
we have that
%
\begin{equation}
\label{Rlimit} \lambda n^{1/3}(R+\widetilde{L}) (x_1,x_2)
\to-\frac{1}{1-\beta+\beta\phi
(\xi)}K_{\Ai}(\xi,\eta)
\end{equation}
uniformly for $\xi,\eta$ in a compact subset of $\mathbb{R}$ as
$n\to\infty$. We will prove this under the assumption that $\psi$ is also
continuously differentiable, which suffices to show the weak
convergence of the point process. This can be seen by an approximation argument.

We will make use of the following fact given below in~(\ref
{Kernelasymptotics}), which again is proved by a saddle point argument
very similar to the one discussed above. The only difference
is in the choice of contours.
As integration contours we will again choose level lines of the
imaginary part of $g(z)$ starting
at $z_c$. Recall that $z_c\in(-\infty, -a)$. It can be seen that we
will have two ascending contours for the real part of $g(z)$ which will
leave in the directions $e^{\pm\pi i/3}$ and
go to $0$. We combine these contours to a contour $\gamma_2'$ and use
it for our $w$-integration. We will have two descending contours going
from $z_c$ to $1/a$ leaving in the directions
$e^{\pm2\pi i/3}$. Combine them into a contour $\gamma_1'$, and use
it for the $z$-integration.

Let $x_1,x_2$ be as in (\ref{Thickscaling}), $r=[(1-k^2u(k))n]$ and
$k\in(-a(1+a^2)^{-1/2},\break -a^{-1}(1+a^2)^{1/2})$. Then, we have
%
\begin{eqnarray}\label{Kernelasymptotics}
\qquad && \frac{\lambda n^{1/3} z_c^{x_1-x_2}}{(2\pi
i)^2}\int_{\gamma_1'} dz\int
_{\gamma_2'} dw\, F(z) \frac{w^{x_2}}{z^{x_1}}\frac
{(a+z)^r(az-1)^{n-r}}{(a+w)^{r+1}(aw-1)^{n-r}}
\frac{1}{w-z}
\nonumber\\[-8pt]\\[-8pt]
&&\qquad \to-\frac
{D}{1-\beta}K_{\Ai}(\xi,\eta)\nonumber
\end{eqnarray}
uniformly for $\xi,\eta$ in compacts as $n\to\infty$, for $F(z)=1$
or $F(z)=\beta z_cf(x_1)/(z-\beta(1-f(x_1))z_c)$, where $D=F(z_c)$.
Note that with the scaling (\ref{Thickscaling}) we have $f(x_1)=\phi
(\xi)$ (ignoring integer parts).

By expanding $M$ and rearranging the sum, we have
\begin{eqnarray*}
&& R(x_1,x_2)
\\
&&\qquad  =\sum_{j=1}^n(Mf)^j
\widetilde{L}(x_1,x_2)
\\
&&\qquad  =\sum_{j=1}^n(-1)^j\sum
_{y_1,\ldots,y_j\in[n]}\beta^{y_j-x_1}\mathbb{I}_{(x_1<y_1)}
\cdots\mathbb{I}_{(y_{j-1}<y_j)}f(y_1)\cdots f(y_j)
\widetilde{L}(y_j,x_2)
\\
&&\qquad  =\sum_{j=1}^n(-1)^j\sum
_{t=j}^{n-x_1}\beta^tf(x_1+t)
\widetilde{L}(x_1+t,x_2)
\\
&&\quad\qquad{}\times \sum_{x_1<y_1<\cdots<y_{j-1},x_1+t}f(y_1)
\cdots f(y_{j-1}).
\end{eqnarray*}
Let $e_j(x_1,\ldots,x_{t-1})$ be the $j$th elementary symmetric
polynomial in $t-1$ variables, and write $f=(f(x_1+1),\ldots,f(x_1+t-1))$. By interchanging the sums, we obtain
%
\begin{eqnarray}
\label{Rsum} R(x_1,x_2)& =&-\sum
_{t=1}^{n-x_1}\beta^tf(x_1+t)
\widetilde{L}(x_1+t,x_2)\sum_{j=0}^{t-1}(-1)^{j-1}e_{j-1}(f)
\nonumber
\\[-8pt]
\\[-8pt]
& =&-\sum_{t=1}^{n-x_1}\beta^tf(x_1+t)
\widetilde{L}(x_1+t,x_2)\prod_{j=1}^{t-1}
\bigl(1-f(x_1+j)\bigr).
\nonumber
\end{eqnarray}
Set
%
\begin{equation}
\label{Tkernel} T(x_1,x_2)=-\sum
_{t=1}^{n-x_1}\beta^tf(x_1)
\bigl(1-f(x_1)\bigr)^{t-1}\widetilde{L}(x_1+t,x_2).
\end{equation}
We then have the following:

\begin{claim}\label{Rclaim}
For all $\xi,\eta$ in a compact subset of $\mathbb{R}$ there is a
constant $C$ such that if we have the scaling
(\ref{Thickscaling}), then
%
\begin{equation}
\label{RTestimate} \bigl|\lambda n^{1/3}(R-T) (x_1,x_2)\bigr|
\le\frac{C}{n^{1/3}}.
\end{equation}
\end{claim}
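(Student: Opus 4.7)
The plan is to estimate $R-T$ term by term. Expanding the definitions gives
\begin{equation*}
(R-T)(x_1,x_2) = -\sum_{t=1}^{n-x_1}\beta^t\,\tilde L(x_1+t,x_2)\,\Delta(t),
\end{equation*}
where
\begin{equation*}
\Delta(t):=f(x_1+t)\prod_{j=1}^{t-1}\bigl(1-f(x_1+j)\bigr)-f(x_1)\bigl(1-f(x_1)\bigr)^{t-1}.
\end{equation*}
The strategy is to combine a smoothness estimate $|\Delta(t)|=O((t+t^2)/n^{1/3})$ for moderate $t$ with the uniform bound $|\tilde L(x_1+t,x_2)|=O(n^{-1/3})$ coming from the saddle-point asymptotics (\ref{Kernelasymptotics}), then sum using $\beta\in(0,1)$.

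Since $\psi$ is $C^1$ with compact support by hypothesis, so is $\phi=1-e^{-\psi}$, and moreover $\phi\le 1-e^{-\|\psi\|_\infty}<1$, so that $1-f(x)\ge c>0$ uniformly. The scaling $f(x)=\phi((x-u(k)n)/(\lambda n^{1/3}))$ yields a discrete Lipschitz bound $|f(x+j)-f(x)|\le Cj/n^{1/3}$. Writing $1-f(x_1+j)=(1-f(x_1))(1+\epsilon_j)$ with $|\epsilon_j|\le Cj/n^{1/3}$, one obtains
\begin{equation*}
\prod_{j=1}^{t-1}(1-f(x_1+j))=(1-f(x_1))^{t-1}\bigl(1+O(t^2/n^{1/3})\bigr)
\end{equation*}
as long as $t^2/n^{1/3}$ stays bounded. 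Inserting this expansion and using $|f(x_1+t)-f(x_1)|\le Ct/n^{1/3}$ leads to $|\Delta(t)|\le C(t+t^2)/n^{1/3}$ for $t\le n^{1/6}$, while the trivial bound $|\Delta(t)|\le 2$ holds for all $t$.

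Now split the sum at $t_0=T\log n$, with $T$ chosen so large that $\beta^{t_0}=n^{-T\log(1/\beta)}\ll n^{-1}$; note $t_0\le n^{1/6}$ for large $n$, so the smoothness estimate applies throughout the main range. For $t\le t_0$, the rescaled argument $\xi+t/(\lambda n^{1/3})$ of $\tilde L$ lies in a slightly enlarged compact set, so the uniformity of (\ref{Kernelasymptotics}) on compacts yields $|\tilde L(x_1+t,x_2)|\le C/n^{1/3}$. Thus
\begin{equation*}
\Bigl|\sum_{t=1}^{t_0}\beta^t\,\tilde L(x_1+t,x_2)\,\Delta(t)\Bigr|\le\frac{C}{n^{2/3}}\sum_{t=1}^{t_0}\beta^t(t+t^2)=O(n^{-2/3}),
\end{equation*}
using convergence of $\sum_t\beta^t(t+t^2)$. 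The tail $t>t_0$ contributes at most $O(\beta^{t_0})$ times a crude polynomial bound on $\tilde L$, which is negligible. Multiplying by $\lambda n^{1/3}$ gives the claimed $O(n^{-1/3})$ bound. The only non-routine ingredient is the uniform saddle-point estimate on $\tilde L(x_1+t,x_2)$ over the slightly enlarged range of rescaled parameters, which follows from the uniformity of (\ref{Kernelasymptotics}) on compacts together with $t_0/(\lambda n^{1/3})\to 0$.
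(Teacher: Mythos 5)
Your decomposition of $R-T$, the Lipschitz bound $|f(x_1+j)-f(x_1)|\le Cj/n^{1/3}$, and the use of the uniform convergence in (\ref{Kernelasymptotics}) to bound $|\lambda n^{1/3}\tilde{L}(x_1+t,x_2)|$ on a range where the rescaled argument stays in a compact set are all in the spirit of the paper's argument (the paper gets $|\Delta(t)|\le Ct^2/n^{1/3}$ for \emph{all} $t$ from the elementary inequality $|\prod a_j-\prod b_j|\le\sum_j|a_j-b_j|$, which avoids your multiplicative expansion, the need for $1-f\ge c>0$, and the restriction $t\le n^{1/6}$). The genuine problem is your treatment of the tail $t>t_0=T\log n$: you bound it by ``$O(\beta^{t_0})$ times a crude polynomial bound on $\tilde{L}$,'' but no such polynomial bound is proved, and it is not obvious. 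The kernel $\tilde{L}(x_1+t,x_2)$ in (\ref{Ltilde}) carries the conjugation factor $z_c^{x_1+t-x_2}$ with $|z_c|>a$ (see (\ref{z_c})) together with the factor $z^{-(x_1+t)}$ in the integrand; estimating the double integral naively on fixed contours gives bounds that are exponential in $n$ (equivalently in $t$ when $t$ is of order $n$), since the saddle point relevant for the shifted argument $x_1+t$ is no longer $z_c$ and no cancellation is captured. A bound that is polynomial, or even suitably sub-exponential, uniformly over $1\le t\le n-x_1$ would require a separate steepest-descent analysis with a moving saddle, which is precisely the work your one-line tail estimate is supposed to avoid; and if only an exponential bound is available, a cutoff at $T\log n$ fails outright.

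The paper's proof sidesteps most of this by exploiting the compact support of $\phi$: fact (i) there says $f(x_1+t)=0$ unless $t\in[-Bn^{1/3},Bn^{1/3}]$, so the terms of $R$ only involve $t=O(n^{1/3})$, a range in which the rescaled arguments remain in a compact set and the uniform convergence in (\ref{Kernelasymptotics}) with $F(z)=1$ legitimately yields $|\lambda n^{1/3}\tilde{L}(x_1+t,x_2)|\le C$; combined with $\sum_t\beta^t t^2<\infty$ this gives (\ref{RTestimate}). To repair your argument along your own lines you would either have to move your cutoff up to $Bn^{1/3}$ (where the uniform asymptotics still apply, and where $\beta^t$ is already super-polynomially small beyond $n^{1/6}$ so the failure of your smoothness bound there is harmless) and then use the support of $f$ to dispose of the remaining terms, or else actually prove a quantitative bound on $\tilde{L}(y,x_2)$ for $y$ macroscopically far from $u(k)n$. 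As written, the step ``crude polynomial bound on $\tilde{L}$'' is a gap, not a routine omission.
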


We proceed with the proof of the theorem and return to the proof of the
claim below. Using (\ref{Ltilde}) we see that
%
\begin{eqnarray}
T(x_1,x_2)& =&-\sum_{t=1}^{n-x_1}
\beta^tf(x_1) \bigl(1-f(x_1)
\bigr)^{t-1} \frac{z_c^{x_1-x_2}}{(2\pi i)^2}\nonumber
\\
&&\hspace*{6pt}{}\times \int_{\gamma_{1}}dz\int_{\gamma_{2}}dw \biggl(\frac{z_c}z \biggr)^t
\frac{w^{x_2}}{z^{x_1}}\frac
{(a+z)^r(az-1)^{n-r}}{(a+w)^{r+1}(aw-1)^{n-r}}\frac{1}{w-z}
\\
& =&-\frac{f(x_1)}{1-f(x_1)}\frac{z_c^{x_1-x_2}}{(2\pi i)^2}\nonumber
\\
&&\hspace*{6pt}{}\times \int_{\gamma
_1}dz\int _{\gamma_2}dw\,\frac{\beta(1-f(x_1))z_c/z-(\beta
(1-f(x_1))z_c/z)^{n-x_1+1}}{
1-\beta(1-f(x_1))z_c/z}\nonumber
\\
&&\hspace*{76pt}{} \times\frac{w^{x_2}}{z^{x_1}}\frac
{(a+z)^r(az-1)^{n-r}}{(a+w)^{r+1}(aw-1)^{n-r}}\frac{1}{w-z}\nonumber
\\
& =&-f(x_1)\frac{z_c^{x_1-x_2}}{(2\pi i)^2}\int_{\gamma_1}dz\int
_{\gamma_2}dw\,\frac{\beta z_c}{z-\beta(1-f(x_1))z_c}\frac{w^{x_2}}{z^{x_1}} \nonumber
\\
&&\hspace*{126pt}{}\times
\frac{(a+z)^r(az-1)^{n-r}}{(a+w)^{r+1}(aw-1)^{n-r}}\frac{1}{w-z}.\nonumber
\end{eqnarray}
The term involving $(\beta(1-f(x_1))z_c/z)^{n-x_1+1}$ does not
contribute since the \mbox{$z$-}integral integrates to $0$ by Cauchy's theorem
because the integrand is analytic outside $\gamma_1$ including at
$\infty$.

It now follows from (\ref{Kernelasymptotics}) that
%
\begin{equation}
\lambda n^{1/3}T(x_1,x_2)\to\frac{\beta\phi(\xi)}{1-\beta(1-\phi
(\xi))}
\frac{1}{1-\beta}K_{\Ai}(\xi,\eta)
\end{equation}
uniformly as $n\to\infty$. If we combine this with (\ref
{RTestimate}) and the fact, again by (\ref{Kernelasymptotics}) with
$F(z)=1$, that
%
\begin{equation}
\lambda n^{1/3}\widetilde{L}(x_1,x_2)\to-
\frac{1}{1-\beta}K_{\Ai}(\xi,\eta)
\end{equation}
we obtain (\ref{Rlimit}), which is what we wanted to prove.

It remains to prove the claim. We will use the following two facts. If
$\xi,\eta$ belongs to a compact subset,
there is a constant $B>0$ and a constant $C$ such that if we
have the scaling (\ref{x1x2}),
then: (i) $f(x_1+t)=0$ if $t\notin[-Bn^{1/3},Bn^{1/3}]$ and (ii)
$|\lambda n^{1/3}\widetilde{L}(x_1+t,x_2)|\le C$ for all $t\in
[-Bn^{1/3},Bn^{1/3}]$.
That (i) holds follows immediately from the fact that $\phi$ has
compact support, and (ii) follows from (\ref{Kernelasymptotics}) in
the case
$F(z)=1$ since we have uniform convergence. We will also use the inequality
\[
\Biggl\llvert\prod_{j=1}^n
a_j-\prod_{j=1}^nb_j
\Biggr\rrvert\le\sum_{j=1}^n|a_j-b_j|,
\]
provided $|a_j|,|b_j|\le1$ for all $j$. This is easy to prove by induction.\

Now, since $\phi$ is continuously differentiable and has compact support,
%
\begin{eqnarray}
&& \Biggl\llvert f(x_1+t)\prod_{j=1}^{t-1}
\bigl(1-f(x_1+j)\bigr)-f(x_1)\prod
_{j=1}^{t-1}\bigl(1-f(x_1)\bigr)\Biggr
\rrvert\nonumber
\\
&&\qquad \le\sum_{j=1}^{t}\bigl|f(x_1+j)-f(x_1)\bigr|\nonumber
\\
&&\qquad  =\sum_{j=1}^{t}\biggl\llvert\phi
\biggl(\frac{[u(k)n+\lambda n^{1/3}\xi
]+j-u(k)n}{\lambda n^{1/3}} \biggr)
\\
&&\hspace*{57pt}{} - \phi\biggl(\frac{[u(k)n+\lambda
n^{1/3}\xi]-u(k)n}{\lambda
n^{1/3}} \biggr)\biggr
\rrvert\nonumber
\\
&&\qquad \le\sum_{j=1}^{t} \frac{Cj}{n^{1/3}}\le
\frac{Ct^2}{n^{1/3}},\nonumber
\end{eqnarray}
for some constant $C$. Thus
%
\begin{eqnarray}
&& \bigl|\lambda n^{1/3}(R-T) (x_1,x_2)\bigr| \nonumber
\\
&&\qquad \le\sum_{t=1}^{n-x_1}\beta^t\bigl|f(x_1+t)\bigr|\bigl|
\lambda n^{1/3}\widetilde{L}(x_1+t,x_2)\bigr|
\nonumber\\[-8pt]\\[-8pt]
&&\hspace*{50pt}{}\times  \Biggl
\llvert f(x_1+t)\prod_{j=1}^{t-1}
\bigl(1-f(x_1+t)\bigr)-f(x_1)\prod
_{j=1}^{t-1}\bigl(1-f(x_1)\bigr)\Biggr
\rrvert\nonumber
\\
&&\qquad \le\frac{C}{n^{1/3}}\sum_{t=1}^{n-x_1}
\beta^tt^2\le\frac
{C}{n^{1/3}},
\nonumber
\end{eqnarray}
since $0<\beta<1$. In the next to last inequality we used (i) and (ii)
above. The estimate (ii) works since we can use (i) to restrict the
range of $t$-values.
\end{pf*}

Above we have been concerned with the behavior of the south dominoes at
the boundary of the frozen region.
We can also consider the behavior of the south dominoes as we enter the
bulk but still stay close to the boundary at a macroscopic scale.
We then get as a scaling limit the thinned sine kernel point process
with the same parameter $\alpha$. We will not go into the details.

Next we will give the proof of Theorem \ref{airytopoisson} which is
concerned with the case when $a$ grows with $n$ but not too fast.
In the case when $a$ instead goes to zero with $n$ but not to fast we
should have convergence to the Airy kernel point process at the
northern boundary for the south
domino process. Note that the thinning parameter $\alpha\to1$ as
$a\to0$. If $a=\gamma/n$ for some fixed $\gamma>0$, then we expect instead
the discrete Bessel kernel in the limit as $n\to\infty$. We will not
go into the details on how to prove these assertions.

\begin{pf*}{Proof of Theorem \ref{airytopoisson}}
Set
%
\begin{equation}
\label{mathcalL} \mathcal{L}(x_1,x_2)=z_c^{x_1-x_2}L(x_1,x_2),
\end{equation}
with $L$ as in (\ref{L-kernel}) and $z_c$ given by (\ref{zc}). Take
$\phi\in C_c(\mathbb{R})$, $0\le\phi\le1$, and let
%
\begin{equation}
\label{fj} f(j)=\phi\biggl(\frac{nu(k)+c(a)n^{1/3}-j}{c(a)n^{1/3}}
\biggr)
\end{equation}
for $j\in\mathbb{Z}$. Furthermore, define
%
\begin{eqnarray}
\label{Mn}
M_n(\xi,\eta)
&=&-c(a)n^{1/3}\mathcal{L}\bigl(
\bigl[u(k)n-c(a)n^{1/3}(\xi-1)\bigr],
\nonumber\\[-8pt]\\[-8pt]
&&\hspace*{62pt} \bigl[u(k)n-c(a)n^{1/3}(
\eta-1)\bigr]\bigr).\nonumber
\end{eqnarray}
Set
%
\begin{equation}
\label{Ij} I_j= \biggl(\frac{u(k)n+c(a)n^{1/3}-(j+1)}{c(a)n^{1/3}},\frac
{u(k)n+c(a)n^{1/3}-j}{c(a)n^{1/3}} \biggr],
\end{equation}
for $j\in\mathbb{Z}$, and for $\xi\in I_j$, define
$\tilde{\phi}_n(\xi)=f(j)$.

Let $A$ be a compact subset of the real line such that $\operatorname
{supp}\tilde{\phi}_n\subseteq A$ for all $n$. Then
%
\begin{eqnarray}\label{Poissonexpectation}
&& \mathbb{E} \biggl[\prod_j\bigl(1-
\phi(\xi_j)\bigr) \biggr]
\nonumber\\[-8pt]\\[-8pt]
&&\qquad  =\sum_{m=0}^n
\frac{(-1)^m}{m!}\sum_{x_1,\ldots,x_m\in[n]}\prod
_{j=1}^m f(x_j)\det\bigl(
\mathcal{L}(x_i,x_j)\bigr)_{m\times m}\nonumber
\\
&&\qquad  =\sum_{m=0}^n\frac{(-1)^m}{m!}\int
_{\mathbb{R}^m}\prod_{j=1}^m
\tilde{\phi}_n(\xi_j)\det\bigl(M_n(
\xi_i,\xi_j)\bigr)_{m\times m} d^m\xi
\\
&&\qquad  =\det(I-\tilde{\phi}_n M_n\mathbb{I}_A)_{L^2(\mathbb{R})},
\end{eqnarray}
where $[n]=\{1,\ldots,n\}$. The second equality follows from (\ref
{mathcalL}) to (\ref{Ij}) and the fact that the integrand is constant
on the intervals $I_j$
in each variable.
Assume that we can show that
%
\begin{equation}
\label{HilbertSchmidt} \|\tilde{\phi}_n M_n\mathbb{I}_A
\|_2\to0
\end{equation}
as $n\to\infty$, where $\|\cdot\|_2$ is the Hilbert--Schmidt norm, and
%
\begin{equation}
\label{Mntrace} M_n(\xi,\xi)\to\sqrt{(1-\xi)_+}
\end{equation}
uniformly for $\xi\in A$ as $n\to\infty$. The result then follows
from (\ref{Poissonexpectation}) in the following way. If the operators
$B_n$ on $L^2(\mathbb{R})$ are
trace class then the determinant $\det_2(I+B_n)$ and the Fredholm
determinant $\det(I+B_n)$ are related by
%
\begin{equation}
\det(I+B_n)=e^{\operatorname{tr}B_n}{\det}_{2}(I+B_n).
\end{equation}
Also, if we have $\|B_n\|_2\to0$ as $n\to\infty$, then we have $\det
_2(I+B_n)\to1$ as \mbox{$n\to\infty$;} see \cite{GGK00}. Hence
%
\begin{equation}
\label{Fredholdeterminantidentity} \det(I-\tilde{\phi}_n M_n
\mathbb{I}_A)=e^{\operatorname{tr}\tilde{\phi}_n M_n\mathbb{I}_A}{\det}_{2}(I-\tilde{\phi}_n M_n\mathbb{I}_A).
\end{equation}
It follows from (\ref{HilbertSchmidt}) that $\det_2(I-\tilde{\phi}_n M_n1_A)\to0$ as $n\to\infty$, and from (\ref{Mntrace}) that
%
\begin{equation}
\operatorname{tr}\tilde{\phi}_n M_n1_A=
\int_{\mathbb{R}}\tilde{\phi}_n(\xi)M_n(\xi,
\xi) \,d\xi\to\int_{\mathbb{R}}\phi(\xi)\sqrt{(1-\xi)_+} \,d\xi,
\end{equation}
as $n\to\infty$. Thus, by (\ref{Poissonexpectation}) and (\ref
{Fredholdeterminantidentity}),
%
\begin{equation}
\lim_{n\to\infty}\mathbb{E} \biggl[\prod
_j\bigl(1-\phi(\xi_j)\bigr)
\biggr]=e^{-\int_{\mathbb{R}}\phi(\xi)\sqrt{(1-\xi)_+} \,d\xi},
\end{equation}
which is what we wanted to prove.

We turn now to the asymptotic analysis of $M_n$ and the proof of (\ref
{HilbertSchmidt}) and (\ref{Mntrace}).
We will denote by $C$ a generic constant that can depend on $k$ and $d$
in Claim \ref{claimAiryapproximation} but not on $n$ or $a$.
Let $\lambda$ be given by (\ref{lambda}) and
define
%
\begin{equation}
M_n^{(1)}(\xi,\eta)=-\lambda n^{1/3}\mathcal{L}
\bigl(\bigl[nu(k)-\lambda n^{1/3}\xi\bigr],\bigl[nu(k)-\lambda
n^{1/3}\eta\bigr]\bigr).
\end{equation}

%
\begin{claim}\label{claimAiryapproximation}
Let $\alpha$ be given by (\ref{alpha}) and fix $d>0$. Then
%
\begin{equation}
\label{Airyapproximation} \bigl|M_n^{(1)}(\xi,\eta)-\alpha
K_{\Ai}(\xi,\eta)\bigr|\le\frac{C}{a^2}
\end{equation}
for all $\xi,\eta\in[-da^{4/3},da^{4/3}]$.
\end{claim}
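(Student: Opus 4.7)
The plan is to establish the claim via a quantitative steepest-descent analysis of the double contour integral for $L^\ast$, refining the asymptotic computation performed in the proof of Theorem \ref{asymptotics:thm:thinnedairy} so as to obtain error bounds uniform over the growing window $|\xi|, |\eta| \leq d a^{4/3}$ and uniform as $a$ grows with $n$ (subject to $a \leq n^{1/10}$).

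First I would write $M_n^{(1)}(\xi,\eta)$ as a double contour integral, absorbing the conjugation $z_c^{x_1-x_2}$ and inserting the scaling $x_i = [u(k)n - \lambda n^{1/3}\xi_i]$. Taking the contours $\Gamma_1, \Gamma_2$ of steepest descent at the double saddle $z_c$ of $g$, I would split each contour into a local part (within radius $z_c n^{-1/3} \log n$ of $z_c$) and a distant part. On the distant part the usual steepest-descent estimate produces a super-polynomially small contribution in $n$, which under $a \leq n^{1/10}$ is absorbed into $O(1/a^2)$. On the local part I would change variables $z = z_c(1 + s/(\lambda n^{1/3}))$ and $w = z_c(1 + t/(\lambda n^{1/3}))$, and Taylor-expand each factor:
\begin{equation*}
n(g(z) - g(w)) = \tfrac{1}{3}(s^3 - t^3) + E_1, \qquad |E_1| \leq C n^{-1/3}(|s|^4 + |t|^4),
\end{equation*}
using the defining relation (\ref{lambda2}) for $\lambda$; the power factors become $e^{-s\xi + E_2}$ and $e^{t\eta + E_3}$ with $|E_2|, |E_3| \leq C n^{-1/3}(|\xi| s^2 + |\eta| t^2)$; and $1/(a+w) = \alpha/z_c \cdot (1 + O(n^{-1/3}|t|))$ by (\ref{alpha}) and (\ref{z_c}). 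The leading-order local integral reproduces $\alpha K_{\text{Ai}}(\xi,\eta)$ via the standard Airy double-integral identity, exactly as in the proof of Theorem \ref{asymptotics:thm:thinnedairy}.

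The technical core, and the main obstacle, will be controlling the accumulated error by $C/a^2$ uniformly for $|\xi|, |\eta| \leq d a^{4/3}$. The dangerous terms are $n^{-1/3} |\xi| s^2$ in the exponent: for $\xi \to +\infty$ they are tamed by the $\tfrac{1}{3} s^3 - s\xi$ part of the Airy-type exponent, whose real part concentrates the integration to $|s| = O(|\xi|^{1/2})$, making the error factor of order $n^{-1/3}|\xi|^2 = O(n^{-1/3} a^{8/3})$, which is $o(1)$ under $a \leq n^{1/10}$; for $\xi \to -\infty$ one exploits the oscillatory cancellation on the contour. Since the factor $1/(a+z_c)$ already carries the prefactor $\alpha \sim 1/a^2$ from (\ref{alpha}) in both the main term and the error, and since the relative error is then $o(1)$ uniformly on the window, the bound $|M_n^{(1)}(\xi,\eta) - \alpha K_{\text{Ai}}(\xi,\eta)| \leq C/a^2$ follows. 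A further delicacy is that the steepest-descent contours must be tracked carefully as $z_c \to 0$ with $a \to \infty$, which is why the change of variables is normalised by $z_c$ from the outset, placing the effective saddle at a location independent of $a$.
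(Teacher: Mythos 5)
Your overall route is the paper's: the substitution $z=z_c(1+s/(\lambda n^{1/3}))$ is literally the paper's $z=z_c+\gamma s n^{-1/3}$ (since $\lambda=z_c/\gamma$ by \eqref{lambda2}), the Airy kernel is produced by the local cubic expansion at $z_c$, and the $C/a^2$ error is traced to replacing $1/(a+w)$ by $\alpha/z_c$ via \eqref{alpha} and \eqref{z_c}. But two quantitative points, which are exactly where the work lies in the growing-$a$ setting, fail as you state them. First, your local/distant cut at radius $z_c n^{-1/3}\log n$ does not scale correctly with $a$: in the rescaled variable it reads $|s|\le\lambda\log n$, and $\lambda\asymp a^{-2/3}$ for large $a$ by \eqref{lambda}, so when $a$ grows like a power of $n$ this window collapses on the natural Airy scale. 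It cannot contain the range $|s|\lesssim|\xi|^{1/2}\le \sqrt{d}\,a^{2/3}$ that actually generates $K_{\text{Ai}}(\xi,\eta)$ on the stated window, and at its boundary $e^{-|s|^3/3}=e^{-O(a^{-2}(\log n)^3)}$ is not small at all, so the assertion that the distant part is super-polynomially small is false --- for $\xi,\eta$ of order $-a^{4/3}$ the discarded part carries essentially the whole integral. The paper instead cuts at powers of $n$: an inner disc $|\zeta|\le n^{1/15}$ (large enough since $a^{2/3}\le\delta_n^{2/3}n^{1/15}$ with $\delta_n=a/n^{1/10}$), an intermediate annulus up to $n^{7/45}$ where \eqref{gestimate1} shows the cubic dominates the linear term, an outer region handled by \eqref{gestimate2}, and a separate truncation $|\omega|\le a^2 n^{1/3}$ via \eqref{infinityestimate}. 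Any repair of your argument needs some such $n$-power-sized window and the accompanying intermediate-scale estimates.

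Second, your Taylor remainders omit the $a$-dependence. Near $z_c$ one has $|g^{(4)}|\le Ca^2$ while the rescaling length is $\gamma\asymp a^{-1/3}$, so the correct bounds are $|E_1|\le Ca^{2/3}n^{-1/3}(|s|^4+|t|^4)$ and $|E_2|\le Ca^{2/3}n^{-1/3}|\xi|\,|s|^2$ (the paper's \eqref{h_1estimate} and \eqref{h_2estimate}), not $Cn^{-1/3}(\cdots)$. On the relevant range $|s|\lesssim a^{2/3}$, $|\xi|\le d a^{4/3}$ these are of order $\delta_n^{10/3}$, small only because the theorem assumes $a(n)/n^{1/10}\to0$; your computation ``$n^{-1/3}|\xi|^2=o(1)$ under $a\le n^{1/10}$'' goes through only because the factor $a^{2/3}$ was dropped, and with it restored the bound is merely $O(1)$ under $a\le n^{1/10}$. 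Tracking this $a$-dependence is precisely what dictates the allowed growth of $a$, so it cannot be elided. (Your appeal to oscillatory cancellation in the direction where the linearized exponent grows is left at the same level of detail as in the paper itself, so I do not count it as an additional gap.)
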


Before proving the claim we finish the proof of the theorem. Set
$\tilde{c}(a)=c(a)/\lambda$, and note that $\tilde{c}(a)\sim\pi
^{2/3}a^{4/3}(1+k)^{2/3}$ as $a\to\infty$.
Then we find that
%
\begin{equation}
\label{Mnrelation} M_n(\xi+1,\eta+1)=\tilde{c}(a)M_n^{(1)}
\bigl(\tilde{c}(a)\xi,\tilde{c}(a)\eta\bigr).
\end{equation}
Thus, with an appropriate fixed $d_1>0$, we have
%
\begin{eqnarray}
\label{HSestimate} \|\tilde{\phi}_n M_n\mathbb{I}_A
\|_2& =&\int_{\mathbb{R}^2}\tilde{\phi}_n(x)^2M_n(x,y)^2
\mathbb{I}_A(y) \,dx\,dy\nonumber
\\
&\le &\int_{[-d_1,d_1]^2}M_n(
\xi+1,\eta+1)^2 \,d\xi \,d\eta
\nonumber
\\[-8pt]
\\[-8pt]
& =&\tilde{c}(a)^2\int_{[-d_1,d_1]^2}M_n^{(1)}
\bigl(\tilde{c}(a)\xi,\tilde{c}(a)\eta\bigr)^2 \,d\xi \,d\eta\nonumber
\\
&=& \int
_{[-da^{4/3},da^{4/3}]^2}M_n^{(1)}(\xi,\eta)^2
\,d\xi \,d\eta,
\nonumber
\end{eqnarray}
where $d=d_1\pi^{2/3}(1+k)^{2/3}$. Using (\ref{Airyapproximation}) we
see that
%
\begin{equation}
\label{HSestimate2} \qquad\quad \|\tilde{\phi}_n M_n\mathbb{I}_A
\|_2\le2\alpha^2\int_{[-da^{4/3},da^{4/3}]^2}K_{\Ai}(
\xi,\eta)^2 \,d\xi \,d\eta+C d^2\bigl(a^{4/3}a^{-2}
\bigr)^2.
\end{equation}
Now $a^{-4/3}\to0$, since $a=a(n)\to\infty$ as $n\to\infty$, and
we see that in order to prove (\ref{HilbertSchmidt}) it remains to
control the integral
in (\ref{HSestimate2}). If we use the identities
%
\begin{equation}
\label{Airykernelidentity} \int_{-\infty}^\infty
K_{\Ai}(x,y)^2 \,dy=K_{\Ai}(x,x),
\end{equation}
and
%
\begin{equation}
\label{Airykernelidentity2} K_{\Ai}(x,x)=\oAi'(x)^2-x
\oAi(x)^2
\end{equation}
we see that
%
\begin{eqnarray}
&& 2\alpha^2\int_{[-da^{4/3},da^{4/3}]^2}K_{\Ai}(\xi,
\eta)^2 \,d\xi \,d\eta\nonumber
\\
&&\qquad \le2\alpha^2\int_{-da^{4/3}}^\infty
\biggl(\int_{-\infty}^\infty K_{\Ai}(\xi,
\eta)^2 \,d\eta\biggr) \,d\xi
\nonumber
\\
&&\qquad  =2\alpha^2\int_{-da^{4/3}}^{\infty}K_{\Ai}(
\xi,\xi) \,d\xi=2\alpha^2\int_{-da^{4/3}}^\infty
\oAi'(\xi)^2-\xi\oAi(\xi)^2 \,d\xi
\\
&&\qquad  =\frac{2\alpha^2}3 \bigl[2\bigl(da^{4/3}\bigr)^2
\oAi^2\bigl(-da^{4/3}\bigr)\nonumber
\\
&&\hspace*{53pt}{} +2da^{4/3}\oAi
'\bigl(-da^{4/3}\bigr)^2-\oAi
\bigl(-da^{4/3}\bigr)\oAi'\bigl(-da^{4/3}\bigr)
\bigr].
\nonumber
\end{eqnarray}
Now, as $r\to\infty$ we have
%
\begin{eqnarray}
\oAi(-r)&=&\frac{1}{\sqrt{\pi}}r^{-1/4}\sin\biggl(\frac{2}3
r^{3/2}+\frac{\pi}4 \biggr)+\cdots,
\nonumber\\[-8pt]\\[-8pt]
\oAi'(-r)&=&
\frac{1}{\sqrt{\pi}}r^{1/4}\cos\biggl(\frac{2}3
r^{3/2}+\frac{\pi}4 \biggr)+\cdots\nonumber
\end{eqnarray}
and hence, since $\alpha\sim(1+k)^{-1}a^{-2}$ for large $a$, we
obtain the bound
%
\begin{equation}
2\alpha^2\int_{[-da^{4/3},da^{4/3}]^2}K_{\Ai}(\xi,
\eta)^2 \,d\xi \,d\eta\le Ca^{-4}\bigl(da^{4/3}
\bigr)^{3/2}\le Ca^{-2},
\end{equation}
and since $a=a(n)\to\infty$ we have proved (\ref{HilbertSchmidt}).

We now turn to the proof of (\ref{Mntrace}).
It follows from (\ref{Airyapproximation}) and (\ref{Mnrelation}) that
%
\begin{eqnarray}
\bigl|M_n(\xi,\xi)-\alpha\tilde{c}(a)K_{\Ai}\bigl(\tilde{c}(a)
(\xi-1),\tilde{c}(a) (\xi-1)\bigr)\bigr|&\le& Ca^{4/3}a^{-2}
\nonumber\\[-8pt]\\[-8pt]
&=&Ca^{-2/3}\nonumber
\end{eqnarray}
for all $\xi$ in a compact interval, and since $a\to\infty$ as $n\to
\infty$, (\ref{Mntrace}) follows by using (\ref{Airykernelidentity2})
and standard asymptotic formulas for the Airy function and its derivative.

It remains to prove Claim \ref{claimAiryapproximation}. Let $\gamma
>0$ be given by $\gamma^3g^{(3)}(z_c)=-2$.
From~(\ref{lambda2}) we see that that $\lambda=z_c/\gamma$. Let $g(z)$ be defined by (\ref{gz}) and write
%
\begin{equation}
f_{\xi}(\zeta)=\lambda n^{1/3}\xi\log\biggl(1+
\frac{\gamma\zeta
}{z_c n^{1/3}} \biggr)
\end{equation}
and
%
\begin{equation}
F_{\xi}(\zeta)=f_{\xi}(\zeta)+n\bigl(g\bigl(z_c+
\gamma\zeta n^{-1/3}\bigr)-g(z_c)\bigr).
\end{equation}
If we use (\ref{L-kernel}) and make the change of variables
$z=z_c+\gamma\zeta n^{-1/3}$, $w=z_c+\gamma\omega n^{-1/3}$
we obtain
%
\begin{eqnarray}\label{MN1formula}
M_n^{(1)}(\xi,\eta)
&=& \frac{\alpha}{(2\pi i)^2}\int
_{\mathcal{C}} d\zeta\int_{\mathcal{D}} d\omega\,
\frac{a+z_c}{(a+z_c+\gamma\omega
n^{-1/3})(\omega-\zeta)}
\nonumber\\[-8pt]\\[-8pt]
&&\hspace*{87pt}{}\times  e^{F_\xi(\zeta)-F_\eta(\omega)},\nonumber
\end{eqnarray}
where $\mathcal{C}$ and $\mathcal{D}$ are the images of the steepest
descent contours in Theorem \ref{asymptoticsthmthinnedairy}.
If $|\omega|\ge a^2n^{1/3}$ we have the estimate
%
\begin{equation}
\label{infinityestimate} \bigl\llvert e^{-F_\eta(\omega)}\bigr\rrvert
\le
C^na^{2n}\biggl\llvert\frac
{n^{1/3}}{a^{2/3}\omega}\biggr\rrvert
^{n/2}
\end{equation}
for all sufficiently large $n$. $\mathcal{C}$ will lie inside $|\zeta
|\le a^2n^{1/3}$, and using
(\ref{infinityestimate}) and the estimates we describe below for the
$\zeta$-integration,\vspace*{1pt} we see that we can replace $\mathcal{D}$
by the part of $\mathcal{D}$ that lies inside $|\omega|\le
a^2n^{1/3}$. We denote this part by $\mathcal{D}$ also for simplicity.

Let $\mathcal{C}_1^\ast$ be the part of $\mathcal{C}$ in the disk
$|\zeta|\le n^{1/15}$, $\mathcal{C}_2^\ast$ the part in the annulus
$n^{1/15}\le|\zeta|\le n^{7/45}$ and $\mathcal{C}_3^\ast$ the part
in $|\zeta|\ge n^{7/45}$. Let $\mathcal{C}_i$ and $\widebar{\mathcal{C}}_i$ be the parts of
$\mathcal{C}_i^\ast$ that lie in the upper and lower half plane,
respectively. We make the analogous definitions for $\mathcal{D}$. We
will consider
estimates of $F_\xi(\zeta)$ on $\mathcal{C}_i$, $i=1,2,3$. The
estimates on $\widebar{\mathcal{C}}_i$ are the same by symmetry, and the
estimates on $\mathcal{D}$
are analogous. The estimates that we need are
%
\begin{equation}
\label{gestimate1} \operatorname{Re} n\bigl(g\bigl(z_c+\gamma\zeta
n^{-1/3}\bigr)-g(z_c)\bigr)\le-\tfrac{1}6|
\zeta|^3
\end{equation}
for all $\zeta\in\mathcal{C}_1+\mathcal{C}_2$ and
%
\begin{equation}
\label{gestimate2} \operatorname{Re} n\bigl(g\bigl(z_c+\gamma\zeta
n^{-1/3}\bigr)-g(z_c)\bigr)\le-\tfrac{1}6n^{7/15}
\end{equation}
for all $\zeta\in\mathcal{C}_3$ and $n$ sufficiently large. To see
this note that we can write
%
\begin{equation}
n\bigl(g\bigl(z_c+\gamma\zeta n^{-1/3}
\bigr)-g(z_c)\bigr)=-\tfrac{1}3\zeta^3+h_1(
\zeta),
\end{equation}
where
%
\begin{equation}
h_1(\zeta)=\frac{\gamma^4}{6n^{1/3}}\int_0^\zeta
g^{(4)}\biggl(z_c+\frac
{\gamma s}{n^{1/3}}\biggr) (
\zeta-s)^3 \,ds.
\end{equation}
Now, if we have $|\zeta|\le n^{7/45}$, then
%
\begin{equation}
\label{h1estimate} \bigl|h_1(\zeta)\bigr|\le Ca^{2/3}n^{-1/3}|
\zeta|^4.
\end{equation}
The estimate (\ref{h1estimate}) follows from the fact that
%
\begin{equation}
\biggl| g^{(4)}\biggl(z_c+\frac{\gamma s}{n^{1/3}}\biggr)\biggr|\le
Ca^2
\end{equation}
for $|s|\le n^{7/45}$ since $\gamma\le Ca^{-1/3}$. If we have $\zeta
\in\mathcal{C}_1+\mathcal{C}_2$, then
$0=-\frac{1}3\operatorname{Im}\zeta^3+\operatorname{Im}h_1(\zeta)$,
and if we write $\zeta=r e^{i\theta}$, this gives
$r^3\sin3\theta=3\operatorname{Im}h_1(re^{i\theta})$ and hence, by
(\ref{h1estimate}),
%
\begin{equation}
|\sin3\theta|\le Ca^{2/3}n^{-1/3} r\le Cn^{-1/9}.
\end{equation}
Since $\mathcal{C}_1$ leaves $z_c$ in the direction $e^{2\pi i/3}$, we
must have $\cos3\theta\ge2/3$ for all large~$n$. Thus
%
\begin{eqnarray}
&& \operatorname{Re} n\bigl(g\bigl(z_c+\gamma\zeta n^{-1/3}
\bigr)-g(z_c)\bigr)
\nonumber\\[-8pt]\\[-8pt]
&&\qquad  =-\tfrac{1}3|\zeta|^3\cos3
\theta+\operatorname{Re} h_1(\zeta)\nonumber
\\
&&\qquad \le -\tfrac{2}9|\zeta|^3\bigl(1-Ca^{2/3}n^{-1/3}|
\zeta|\bigr)\le-\tfrac{1}6|\zeta|^3
\end{eqnarray}
if $\zeta\in\mathcal{C}_1+\mathcal{C}_2$ and $n$ is large. This
proves (\ref{gestimate1}). Since $\operatorname{Re} n(g(z_c+\gamma
\zeta n^{-1/3})-g(z_c))$
is decreasing as we move along $\mathcal{C}$ in the upper half plane
starting at $z_c$, we see that the value on $\mathcal{C}_3$ must be
$\le-n^{7/15}/6$
by using the estimate (\ref{gestimate1}) at the point where $\mathcal
{C}$ meets $|\zeta|=n^{7/15}$ (the endpoint of $\mathcal{C}_2$). This
proves~(\ref{gestimate2}).

We can write
%
\begin{equation}
f_\xi(\zeta)=\xi\zeta+\xi h_2(\zeta),
\end{equation}
where
%
\begin{equation}
h_2(\zeta)=-\frac{1}{\lambda n^{1/3}}\int_0^\zeta
\frac{\zeta
-s}{(1+s/\lambda n^{1/3})^2} \,ds,
\end{equation}
and we see that if $|\zeta|\le n^{7/45}$, then
%
\begin{equation}
\label{h2estimate} \bigl|h_2(\zeta)\bigr|\le Ca^{2/3}n^{-1/3}|
\zeta|^2.
\end{equation}
We start by estimating $F_\xi(\zeta)$ on $\mathcal{C}_3$. If $\zeta
\in\mathcal{C}_3$, then $|\zeta|\le a^2n^{1/3}$ and hence
%
\begin{equation}
\operatorname{Re} f_\xi(\zeta)=\lambda\xi n^{1/3}\log\biggl
\llvert1+\frac{\zeta}{\lambda n^{1/3}}\biggr\rrvert\le Cn^{6/15}\log n,
\end{equation}
and combining this with (\ref{gestimate2}), we see that $\operatorname
{Re} F_\xi(\zeta)\le-\frac{1}{12}n^{7/15}$ for large $n$,
and hence the contribution from $\mathcal{C}_3$ is negligible.\vspace*{1pt}

Since $|\xi|\le Ca^{4/3}$ and $a\le Cn^{1/10}$ we see that if $\zeta
\in\mathcal{C}_2$, then $\operatorname{Re} f_\xi(\zeta)\le
Cn^{1/10}|\zeta|$. Combining
this with (\ref{gestimate1}) we see that $\operatorname{Re} F_\xi
(\zeta)\le-|\zeta|^3/12$ for $n$ large, and hence
the contribution from $\mathcal{C}_2$ is negligible.

Thus, with an error that is much smaller than $Ca^{-2}$, we can replace
$M_n^{(1)}(\xi,\eta)$ by
%
\begin{eqnarray}\label{Mn1tildeestimate}
\qquad \widetilde{M}_n^{(1)}(\xi,\eta)\nonumber
&=& \frac{\alpha}{(2\pi i)^2}
\int_{\mathcal{C}_1+\widebar{\mathcal{C}}_1} d\zeta
\int_{\mathcal
{D}_1+\widebar{\mathcal{D}}_1}
d\omega\,\frac{a+z_c}{(a+z_c+\gamma\omega n^{-1/3})(\omega-\zeta)}
\nonumber\\[-8pt]\\[-8pt]
&&\hspace*{126pt}{}\times
e^{F_\xi
(\zeta)-F_\eta(\omega)}.\nonumber
\end{eqnarray}
Set $\delta_n=a(n)/n^{1/10}$. By assumption $\delta_n\to0$ as $n\to
\infty$.
If $|\zeta|\le n^{1/15}$, it follows from (\ref{h1estimate}) and
$(\ref{h2estimate})$ that
$F_\xi(\zeta)=-\zeta^3/3+\xi\zeta+r_n(\zeta)$, where $|r_n(\zeta
)|\le C\delta_n^{2/3}$. Also
%
\begin{equation}
\alpha\biggl\llvert\frac{a+z_c}{(a+z_c+\gamma\omega n^{-1/3})}-1\biggr
\rrvert\le C/a^2,
\end{equation}
if $\omega\in\mathcal{D}_1+\widebar{\mathcal{D}}_1$, and thus we can
approximate $\widetilde{M}_n^{(1)}(\xi,\eta)$ with
%
\begin{equation}
\label{approximateAirykernel} \frac{\alpha}{(2\pi i)^2}\int_{\mathcal
{C}_1+\widebar{\mathcal{C}}_1} d\zeta\int
_{\mathcal{D}_1+\widebar{\mathcal{D}}_1} d\omega\, e^{-\zeta^3/3+\xi\zeta
+\omega/3-\eta\omega}\frac{1}{\omega-\zeta}.
\end{equation}
Note that, if $|\zeta|\ge n^{1/15}$, then $\xi|/|\zeta|^2\le
Ca^{4/3}/n^{2/15}\le C\delta_n^{4/3}$ and thus,
with a negligible error, we can replace the expression in (\ref
{approximateAirykernel}) by $\alpha K_{\Ai}(\xi,\eta)$.
This completes the proof of the claim and the theorem.
\end{pf*}

\section{Gibbs measure}\label{sec5}\label{secBulk}

In this section, we continue our study of the asymptotics of domino
tilings with the study of the local Gibbs measure. We denote the
asymptotic coordinates by $\xi=(\xi_1,\xi_2)$. That is, for a
vertex inside the Aztec diamond denoted by $x=(x_1,x_2)$, we have
$x/(2n)\to\xi$.
For the remaining calculations of this paper, we use the same saddle
point function. This saddle point function is an extension of~(\ref
{gz}) because we now keep track of the asymptotic coordinates. Define
%
\begin{eqnarray}\label{bulkdefg}
g(z;\xi) &:=& g(z;\xi_1,\xi_2)
:=
\xi_2 \log(a+z) +(1-\xi_2) \log(az -1) - \xi_1
\log z.\hspace*{-25pt}
\end{eqnarray}
Recall that $\mathcal{D}$ denotes the unfrozen region and is given by
the area bounded by the ellipse
%
\begin{equation}
\frac{(v-u)^2}{1-p} +\frac{(u+v-1)^2}{p}=1,
\end{equation}
where $p=1/(1+a^2)$.

%
\begin{lemma} \label{bulklemmap}
The equation $g'(z;\xi_1,\xi_2)=0$ has a unique solution $z=z_\xi$
in $\mathbb{H}$ if and only if $\xi\in\mathcal{D}$. 
\end{lemma}

\begin{pf}
We expand out the equation $g'(z;\xi_1,\xi_2)=0$. We find that this
is equal to
%
\begin{equation}
-\frac{\xi_1}{z}+\frac{a (1-\xi_2 )}{a z-1}+\frac{\xi
_2}{a+z} =0.
\end{equation}
We solve the above equation with respect to $z$ and the solutions are
given by
%
\begin{eqnarray}
\quad&& \bigl( a^2 \xi_1-\xi_1 + \xi_2 + a^2 \xi_2-a^2
\nonumber\\[-8pt]\\[-8pt]
&&\qquad{}   \pm\sqrt{-4a^2(1-\xi_1) \xi_1+ \bigl(\xi_2-\xi_1 +a^2 (\xi_2+\xi_1-1)\bigr)^2}\bigr)
/ \bigl(2 a(1-\xi_1)\bigr).\nonumber
\end{eqnarray}
The expression under the square root term in the above equation is
given by
%
\begin{eqnarray}
&&-4a^2(1-\xi_1) \xi_1+ \bigl(
\xi_2-\xi_1 +a^2 (\xi_2+
\xi_1-1)\bigr)^2
\nonumber
\\[-8pt]
\\[-8pt]
&&\qquad  = \biggl( \frac{(\xi_1-\xi_2)^2}{a^2} + (\xi_2+\xi_1-1
)^2 \biggr)a^2\bigl(1+a^2\bigr)
-a^2,
\nonumber
\end{eqnarray}
which is less than zero if and only if $\xi_1, \xi_2 \in\mathcal
{D}$. Therefore, we set
%
\begin{eqnarray}\label{bulkdefzxi}
\qquad z_{\xi} &=&
\bigl( a^2 \xi_1-\xi_1 + \xi_2 + a^2 \xi
_2-a^2\nonumber
\\
&&\hspace*{3pt}{} +i \sqrt
{4a^2(1-\xi_1) \xi_1- \bigl(\xi_2-\xi_1 +a^2 (\xi_2+\xi_1-1)\bigr)^2}\bigr)
\\
&&{} / \bigl(2 a(1-\xi_1)\bigr).\nonumber
\end{eqnarray}\upqed
\end{pf}

We now describe the contours of steepest ascent and descent of $g$.
In Lemma~\ref{bulklemmap}, we analyzed the saddle points of $g$. The
two nonreal saddle points of $g$ are simple (and are conjugate pairs)
and as $g$ is analytic in the upper half plane, the paths of steepest
ascent and descent are the level lines of $\operatorname{Im} g$. These
paths can cross the real line at $-a,0,1/a$. We now describe these paths.

The paths of steepest descent and ascent of the saddle point function
are determined in the upper half plane since the lower half plane is a
reflection. From the saddle point, there are two paths of steepest
ascent, one which goes to $\infty$ while the other ends at $0$. From
the saddle point, there are two paths of steepest descent, one which
ends at $1/a$ and another ending at $-a$. See Figure~\ref
{figcontours1} for an example of the contours of steepest descent and ascent.
%
\begin{figure}

\includegraphics{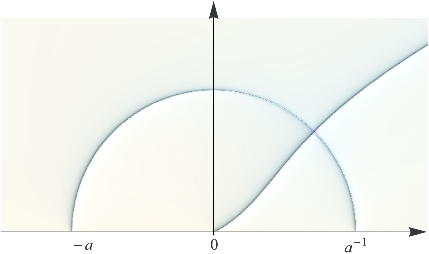}

\caption{A relief plot of $\log|\operatorname{Im}(g(z;\xi)-g(z_\xi;\xi
))|$ with $a=1$, $z_\xi=e^{i \pi/4}$ and $\xi=(1/2,\frac{1}{4}
(2+\sqrt{2} ))$. The relief plot captures where $\operatorname{Im}g(z;\xi)$ is constant and the logarithm is for visual
purposes---it sharpens the relief plot.
The contour of steepest descent starts at the origin and ends at
infinity (goes to the right). The steepest ascent contour starts at
$a^{-1}$ and ends at $-a$. The plot is symmetric in the lower half-plane.}\label{figcontours1}
\end{figure}

We now prove Theorem~\ref{localGibbsmeasure}.

\begin{pf*}{Proof of Theorem~\ref{localGibbsmeasure}}
Below, we will neglect the integer part in the expressions since they
are unimportant.
We will only consider the case $x_1<y_1+1$ which means that
$K^{-1}(x,y)=f_1(x,y)$ for $x\in\mathtt{W}$ and $b \in\mathtt{B}$.
This is due to the following: for $x_1\geq y_1+1$,
%
\begin{eqnarray}
\label{Gibbsothercase}
&& K^{-1}(x,y)\nonumber
\\
&&\qquad  = f_1(x,y)-f_2(x,y)
\nonumber\\[-8pt]\\[-8pt]
&&\qquad  =\frac{(-1)^{(x_1+x_2+y_1+y_2)/4}}{(2\pi i)^2} \nonumber
\\
&&\quad\qquad{}\times \int_{\gamma
_1} \int_{\gamma_2}
dw \,dz\, \frac{ (a+z)^{x_2/2} (a
z-1)^{(2n-x_2)/2} w^{(y_1)/2}}{ z^{(x_1+1)/2}
(w-z) (a+w)^{(y_2+1)/2} (a w-1)^{(2n+1-y_2)/2}},\nonumber\hspace*{-8pt}
\end{eqnarray}
where $\gamma_2$ is a positively oriented closed contour containing
$-a$ and $0$ but not $1/a$, and $\gamma_1$ is a positively oriented
closed contour containing $\gamma_2$ but not $1/a$. The above formula
is found by first moving the contour $\mathcal{E}_2$ in the definition
of $f_1(x,y)$ to the contour $\gamma_2$, followed by moving the
$\mathcal{E}_1$ through $\gamma_2$ to the contour $\gamma_1$; see
Figure~\ref{figcontourswitch}. This picks up a single integral
contribution from $z=w$ (which is negative) with contour of integration
around $\gamma_1$. This single integral contribution is equal to
$f_2(x,y)$, which is seen by deforming the contour through infinity,
which cancels with the term $-f_2(x,y)$ obtained from the split
definition of $K^{-1}(x,y)$ for $x_1 \geq y_1+1$. Since the double
contour integral formula in equation~(\ref{Gibbsothercase}) is
similar to the double contour integral formula in $f_1(x,y)$, the
computation of $K^{-1}(x,y)$ for $x_1>y_1+1$ is similar to the
computation of $K^{-1}(x,y)$ for $x_1<y_1+1$.

We have $(x_1,x_2)=( [2\xi_1 n]+2 \alpha_1+1, [ 2 \xi_2n]+2 \alpha
_2)$ and $(y_1,y_2)=( [2\xi_1 n]+2 \beta_1, [ 2 \xi_2n]+2 \beta
_2+1)$ so that $(x_1/(2n),x_2/(2n)) \to(\xi_1,\xi_2) \in[0,1]^2$ as
$n$ tends to infinity. We also have
%
\begin{equation}
\qquad (-1)^{(x_1+x_2+y_1+y_2)/4} = (-1)^{(x_1+x_2-y_1-y_2+2)/4} =
i (-1)^{(x_1+x_2-y_1-y_2)/4}
\end{equation}
which follows from the fact that if $y_1+y_2 \operatorname{mod}4 =2
\varepsilon+1$, then $2-y_1+y_2 \operatorname{mod}4=2 \varepsilon+1$
for $\varepsilon\in\{0,1\}$. Since
%
\begin{equation}
(-1)^{(x_1+x_2+y_1+y_2)/4}= i(-1)^{(\alpha_1 +\alpha_2-
\beta_1 -\beta_2)/2},
\end{equation}
we find that
%
\begin{eqnarray}
\qquad && f_1\bigl((x_1,x_2),(y_1,y_2)
\bigr)\nonumber
\\
&&\qquad  =\frac{ i(-1)^{(\alpha_1 +\alpha_2-
\beta_1 -\beta_2)/2}}{(2 \pi i)^2}
\\
&&\quad\qquad{}\times  \int_{\mathcal{E}_2} \int
_{\mathcal{E}_1} dz \,dw\,\frac{ e^{n(g(z;\xi)-g(w;\xi))}}{w-z} \frac{ w^{\beta
_1}(a+z)^{\alpha_2}(az-1)^{-\alpha_2}}{z^{\alpha_1+1} (a+w)^{\beta
_2+1}(a w-1)^{-\beta_2}},
\nonumber
\end{eqnarray}
where $g$ is given in~(\ref{bulkdefg}).

By a computation, we have that
$|z_{\xi}| =r_1$
and
%
\begin{equation}
\biggl\llvert\frac{a+z_{\xi}}{a z_{\xi}-1} \biggr\rrvert=r_2,
\end{equation}
where $r_1$ and $r_2$ are defined in~(\ref{bulkdefr}).

From Lemma~\ref{bulklemmap}, we have that for $(\xi_1,\xi_2) \in
\mathcal{D}_c$, then $z_{\xi} \in\mathbb{H}$. By knowing the
contours of steepest ascent and descent for $g$ which are given above,
we deform the contours $\mathcal{E}_1$ and $\mathcal{E}_2$
accordingly which is the same contour deformation as given in Section~4
of~\cite{Joh05}. Explicitly, we move the contour $\mathcal{E}_2$ to
go through $\bar{z}_{\xi}$ and $z_{\xi}$ passing through the
origin and going to infinity. We move $\mathcal{E}_1$ to pass through
$z_{\xi}$ and $\bar{z}_{\xi}$ which passes either side of the
origin of the $x$ axis at $-a$ and $1/a$. Since the contours must cross
under this deformation,
we pick up an additional single integral from the contribution at $z=w$
along the line $z_{\xi}$ to $\bar{z}_{\xi}$. The double contour
integral, whose contours of integration are as given above, is
$O(n^{-1/2})$; see \cite{OR03}, for example. Note that due to our
formulas and orientations of the contours, the additional single
integral term comes with a minus sign (due to our formulas). We find
that $f_1(x,y)$ is given by
%
\begin{eqnarray}
&& -\frac{i (-1)^{(\alpha_1+\alpha_2-\beta_1 - \beta_2)/2}}{2\pi i} \int
_{\bar{z}_{\xi}}^{z_{\xi}}
z^{\beta_1-\alpha_1-1} \frac
{1}{a+z} \biggl( \frac{a+z}{a z-1}
\biggr)^{ \alpha_2-\beta_2} \,dz
\nonumber\\[-8pt]\\[-8pt]
&&\qquad {} +O\bigl(n^{-1/2}\bigr).\nonumber
\end{eqnarray}
We make the change of variables $w=t(z)=(a+z)/(az-1)$. With this change
of variables $t(w)=z$ and $dz= -(1+a^2)/(aw-1)^2 \,dw$. We obtain
%
\begin{eqnarray}
\label{gibbsmeasure-1}
f_1(x,y)& =&-\frac{i (-1)^{(\alpha_1+\alpha_2-\beta
_1 - \beta_2)/2}}{2\pi i}\nonumber
\\
&&\hspace*{6pt}{}\times  \int
_{t(\bar{z}_{\xi} )}^{t(z_{\xi})} t(w)^{\beta_1- \alpha_1 -1}
\frac{ w^{\alpha_2-\beta_2}}{ a+((a+w)/(aw-1))}\frac{ (-(1+a^2))}{(aw-1)^2} \,dw\nonumber
\\
&&{} +O\bigl(n^{-1/2}\bigr)
\\
& =&\frac{i (-1)^{(\alpha_1+\alpha_2-\beta_1 - \beta_2)/2}}{2\pi i} \int
_{t(\bar{z}_{\xi} )}^{t(z_{\xi})}
t(w)^{\beta_1- \alpha
_1 -1} \frac{ w^{\alpha_2-\beta_2-1}}{aw-1} \,dw\nonumber
\\
&&{} +O\bigl(n^{-1/2}\bigr).\nonumber
\end{eqnarray}
We have that $|a+w|^2 \leq r_1^2 |a w-1|^2$ for $w=r_2 e^{i \theta}$
if and only if
%
\begin{eqnarray}
&& a^2+ r_2^2\cos^2 \theta+2
ar_2 \cos\theta+r_2^2 \sin^2
\theta
\nonumber\\[-8pt]\\[-8pt]
&&\qquad \leq r_1^2 \bigl(a^2
r_2^2 \cos^2 \theta-2 a r_2 \cos
\theta+1 +a^2 r_2^2 \sin^2 \theta
\bigr)\nonumber
\end{eqnarray}
which means that
%
\begin{equation}
\cos\theta\leq\frac{a^2 (r_1^2 r_2^2-1) +r_1^2 -r_2^2}{2 r_2
a(1+r_1^2)} = \frac{ \xi_2-\xi_1 +a^2 (\xi_1+\xi_2-1)}{2 a \sqrt{
\xi_1(1-\xi_1)}}.
\end{equation}
The above equation holds with equality if $\theta=\theta_{\xi}$
where $\theta_{\xi} = \arg z_{\xi}$ since
%
\begin{equation}
\operatorname{Re} (z_\xi) = \frac{\xi_2-\xi_1+a^2(\xi_1+\xi_2-1)}{2
a(1-\xi_1)}
\end{equation}
and $|z_\xi|=\sqrt{\xi_1/(1-\xi_1)}$.

Using the residue formula, we write
%
\begin{eqnarray}
\label{gibbsmeasure} \lim_{n \to\infty} f_1(x,y) & =&
\frac{i (-1)^{(\alpha_1+\alpha
_2-\beta_1 - \beta_2)/2}}{(2\pi i)^2}\nonumber
\\
&&{}\times \int_{|w|=r_2} \int_{|z|=r_1}
\frac{ z^{\beta_1-\alpha_1-1} w^{\alpha_2-\beta_2-1}}{( z-((a+w)/(aw-1)))(a w-1)} \,dz \,dw
\nonumber
\\[-8pt]
\\[-8pt]
& =&\frac{i (-1)^{(\alpha_1+\alpha_2-\beta_1 - \beta_2)/2}}{(2\pi
i)^2}\nonumber
\\
&&{}\times \int_{|w|=r_2} \int_{|z|=r_1}
\frac{ z^{\beta_1-\alpha_1-1}
w^{\alpha_2-\beta_2-1}}{a z w-z-a-w} \,dz \,dw.
\nonumber
\end{eqnarray}
Take the change of variables $z \mapsto i/ z$ and $w \mapsto-i/ w$
which gives
%
\begin{eqnarray}
&& \lim_{n \to\infty} f_1(x,y)
\nonumber\\[-8pt]\\[-10pt]
&&\qquad  =\frac{i}{(2\pi i)^2}\int
_{|w|=1/r_2} \int_{|z|=1/r_1} \frac{ z^{\alpha_1-\beta_1-1} w^{\beta
_2-\alpha
_2-1}}{a/( z w)-i/z-a+i/w} \,dz
\,dw.\nonumber
\end{eqnarray}
The above formula, under the change of variables $z \mapsto z r_1$ and
$w \mapsto w r_2$, is equal to
%
\begin{equation}
\frac{r_1^{-\alpha_1+\beta_1} r_2^{-\beta_2+\alpha_2}}{ (2 \pi
i)^2} \int_{|w|=1} \int_{|z|=1}
\frac{ z^{\alpha_1-\beta_1-1}
w^{\beta_2-\alpha_2-1}}{P(z r_1,w r_2)} \,dz \,dw,
\end{equation}
which is equal to
%
\begin{equation}
r_1^{\alpha_1 +\beta_1} r_2^{-\beta_2+\alpha_2}
K^{-1}_{\mu} \bigl((2\alpha_1+1,2
\alpha_2),(2\beta_1,2 \beta_2+1)\bigr).
\end{equation}
Computing the probability of any cylinder event using the above formula
in~(\ref{localstats}) is equivalent to the probability of any cylinder
event using~(\ref{Gibbskast}) in~(\ref{localstats}) which means we
have verified Theorem~\ref{localGibbsmeasure} for $x_1<y_1+1$. As
mentioned above, a similar argument holds for $x_1\geq y_1+1$, but
uses~(\ref{Gibbsothercase}) instead.
\end{pf*}

\section{Discussion on height fluctuations}\label{sec6} \label{conclusion}

In this article, we have focused on studying domino tilings of the
Aztec diamond using the information obtained from the inverse Kasteleyn
matrix. Here, we briefly discuss the height function associated to
domino tilings of the Aztec diamond and its fluctuations in the scaling limit.

The height function, introduced in~\cite{Thu90}, is defined on the
faces of the Aztec diamond graph as follows: the height change between
two adjacent faces is $\pm3$ if there is a dimer covering the shared
edge between the two faces and $\mp1$ otherwise. As we traverse
between two adjacent faces, we choose the sign convention to be $+3$
(or, resp., $-$3) if the left vertex is black (or, resp., white). The
definition is consistent around each vertex, that is, the total height
change around each vertex is zero. Each dimer covering is in bijection
(up to a chosen height level) with the height function; see Figure~\ref
{figprettypicturesofaztecdiamonds} for an example domino tiling
and height function.

Denote $h_n(\mathtt{f})$ to be the height function at a face $\mathtt
{f}$ in the Aztec diamond graph.
Using either the inverse Kasteleyn matrix or the correlation kernel for
the red--blue particles, we can compute the moments of height function
at faces $\mathtt f_1,\ldots,\mathtt f_m$ (i.e., ${\mathbb{E}}[\prod
_{i=1}^m h_n(\mathtt{f_i})]$).

The Gaussian free field $F$ on $\mathbb{H}$, the upper half plane, is
a probability measure on the set of generalized functions on $\mathbb
{H}$ such that for any compactly supported test functions $\phi_1,\phi
_2$, $ \langle F,\phi_1 \rangle:= \int_{\mathbb{H}}
F(z) \phi_1 (z) |dz|^2$ is a real Gaussian random variable with mean
zero and covariance
%
\begin{equation}
{\mathbb{E}}\bigl[ \langle F,\phi_1 \rangle\langle F,
\phi_2 \rangle\bigr] = \int_{\mathbb{H}^2}
\phi_1(z_1) \phi_2 (z_2)
G(z_1,z_2) |dz_1|^2
|dz_2|^2,
\end{equation}
where
%
\begin{equation}
\label{Greensfunction} G(z_1,z_2)=-\frac{1}{2\pi} \log
\biggl\llvert\frac{ z_1-z_2}{z_1
-\bar{z}_2} \biggr\rrvert.
\end{equation}
Let $\widetilde{H}_n(\mathtt f/(2n))=h_n(\mathtt f)-h_n^a(\mathtt f)$
where $h_n^a(\mathtt f)$ is the average height function at the face
$\mathtt f=(\mathtt f_1,\mathtt f_2)$ with $\mathtt f/(2n)=(\mathtt
f_1/(2n),\mathtt f_2/(2n))$. For $\xi=(\xi_1,\xi_2) \in\mathcal
{D}$, we define the map $\Omega\dvtx \mathcal{D}\to\mathbb{H}$ by
%
\begin{eqnarray}\label{intromap}
\Omega(\xi)&=&
\bigl( a^2 \xi_1-\xi_1 + \xi_2 + a^2 \xi
_2-a^2
\nonumber\\[-8pt]\\[-8pt]
&&\hspace*{3pt}{} +i \sqrt
{4a^2(1-\xi_1) \xi_1- \bigl(\xi_2-\xi_1 +a^2 (\xi_2+\xi_1-1)\bigr)^2}\bigr)
/\bigl(2 a(1-\xi_1)\bigr),\hspace*{-27pt}\nonumber
\end{eqnarray}
which is obtained in the proof of Lemma~\ref{bulklemmap}.

We expect that for fixed $0<a<\infty$ and for $\mathtt f \in2 n
\mathcal{D}$, $\sqrt{\pi}\widetilde{H}_n(\mathtt f/(2n))$ converges
weakly to the $\Omega$-pullback of the Gaussian free field $F$ on
$\mathbb{H}$ in the sense
%
\begin{equation}
\frac{\sqrt{\pi}}{n^2}\mathop{\sum_{\mathtt f \in2n\mathcal
{D}}}_{\mathrm{faces}}
\phi\bigl(\mathtt f/(2n)\bigr) \widetilde{H}_n(\mathtt f) \stackrel{
\mathrm{weakly}} {\longrightarrow}\int_{\mathbb{H}} \phi\bigl(
\Omega^{-1}(z) \bigr) J(z) F(z) |dz|^2,
\end{equation}
where $J(z)$ is the Jacobian under the change of variables from
$z=\Omega(\xi)$ to $\xi$, and the sum is over faces $\mathtt{f}$ in
$2 n \mathcal{D}$.

We will not prove this but will mention the possible steps that one
would need to formulate a proof which is based on~\cite{BF08} where
the authors give a complete proof for the height fluctuations of a
certain lozenge tiling model under the pullback of a certain map.
First, one would require the following technical estimates based
on~\cite{BF08}, Section~6:
\begin{longlist}[(5)]
\item[(1)] Compute $K^{-1}(x,y)$ both when $x$ and $y$ are in the bulk
and are asymptotically distant where $x$ is in the bulk\vspace*{2pt} if $\inf_{\xi
\in\partial\mathcal{D}} |x-2n \xi|_1 > n^{2/3}$, and $x$ and $y$
are asymptotically distant if $|x-y|_1 > n^{1/2+\delta}$ for all
$\delta>0$.
\item[(2)] Bound $K^{-1}(x,y)$ both when $x$ and $y$ are
asymptotically close, where $x$ and $y$ are asymptotically close if
$|x-y|_1 \leq n^{1/2+\delta}$.
\item[(3)] Bound $K^{-1}(x,y)$ when either $x$ or $y$, or both, are
close to the edge, where $x$ is close to the edge if $n^{1/3}< \inf_{\xi
\in\partial\mathcal{D}} |x-2n \xi|_1 \leq n^{2/3}$, and $x$
is either in the unfrozen or frozen regions.
\item[(4)] Bound $K^{-1}(x,y)$ when either $x$ or $y$, or both, are at
the edge, where $x$ is at the edge if $ \inf_{\xi\in\partial
\mathcal{D}} |x-2n \xi|_1 \leq n^{1/3}$, and $x$ is either in the
unfrozen or unfrozen regions.
\item[(5)] Bound $K^{-1}(x,y)$ when either $x$ or $y$, or both, are in
the frozen regions.
\end{longlist}

After these estimates are found, one could then use the fact that
moments of the height function can be expressed in terms of the inverse
Kasteleyn matrix~\cite{Bou07a,deTil07,Ken01,Ken00,Ken08b}.
With the above bounds, one could hopefully show that the moment formula
for the height function tends to the moments of a Gaussian random
variable with variance given by~(\ref{Greensfunction}) for
asymptotically distant points in the bulk.
After this, one would need an analogous result to~\cite{BF08}, Theorem~1.2, which shows that the variance of the height
function in the
unfrozen region is order $\log n$. Using a result of this form combined
with the convergence of moments, one could then conclude the proof as
in~\cite{BF08}, Section~5.5. See also~\cite{Pet12b}.

Other approaches for proving fluctuations of the height function
arising from tiling models have been considered in \cite{Dui11},
where the author uses a linear statistic to bypass the rather technical
estimates arising from the frozen--unfrozen boundaries and \cite
{Dub11}, where the author considers the
characteristic function of the height function using the
Cauchy--Riemann operators. The proofs of the results of \mbox{\cite{Ken00,Ken01}} do not apply to domino tilings on the Aztec diamond due to the
domino tilings studied there had the so-called \emph{Temperley boundary
conditions}.

\section*{Acknowledgments} We would like to thank Alexei Borodin,
Maurice Duits, Harald Helfgott, Richard Kenyon, Tony Metcalfe, Jim
Propp and David Wilson for enlightening conversations. We would like to
thank MSRI (Berkeley) where part of this work was carried out and the
Knut and Alice Wallenberg foundation for financial support. We are also very grateful for the anonymous referees
whose comments helped in dramatically improving this paper.

%
%



\printaddresses

\begin{thebibliography}{43}
\bibitem{AGZ10}
\begin{bbook}[mr]
\bauthor{\bsnm{Anderson},~\bfnm{Greg~W.}\binits{G.~W.}},
\bauthor{\bsnm{Guionnet},~\bfnm{Alice}\binits{A.}} \AND
\bauthor{\bsnm{Zeitouni},~\bfnm{Ofer}\binits{O.}}
(\byear{2010}).
\btitle{An Introduction to Random Matrices}.
\bseries{Cambridge Studies in Advanced Mathematics}
\bvolume{118}.
\bpublisher{Cambridge Univ. Press},
\blocation{Cambridge}.
\bid{mr={2760897}}
\end{bbook}
\bptok{imsref}%
\endbibitem

\bibitem{Bor02}
\begin{barticle}[mr]
\bauthor{\bsnm{Borodin},~\bfnm{Alexei}\binits{A.}}
(\byear{2002}).
\btitle{Duality of orthogonal polynomials on a finite set}.
\bjournal{J. Stat. Phys.}
\bvolume{109}
\bpages{1109--1120}.
\bid{doi={10.1023/A:1020432812090}, issn={0022-4715}, mr={1938288}}
\end{barticle}
\bptok{imsref}%
\endbibitem

\bibitem{BF08}
\begin{barticle}[mr]
\bauthor{\bsnm{Borodin},~\bfnm{Alexei}\binits{A.}} \AND
\bauthor{\bsnm{Ferrari},~\bfnm{Patrik~L.}\binits{P.~L.}}
(\byear{2014}).
\btitle{Anisotropic growth of random surfaces in {$2+1$} dimensions}.
\bjournal{Comm. Math. Phys.}
\bvolume{325}
\bpages{603--684}.
\bid{doi={10.1007/s00220-013-1823-x}, issn={0010-3616}, mr={3148098}}
\end{barticle}
\bptok{imsref}%
\endbibitem

\bibitem{BGR10}
\begin{barticle}[mr]
\bauthor{\bsnm{Borodin},~\bfnm{Alexei}\binits{A.}},
\bauthor{\bsnm{Gorin},~\bfnm{Vadim}\binits{V.}} \AND
\bauthor{\bsnm{Rains},~\bfnm{Eric~M.}\binits{E.~M.}}
(\byear{2010}).
\btitle{{$q$}-distributions on boxed plane partitions}.
\bjournal{Selecta Math. (N.S.)}
\bvolume{16}
\bpages{731--789}.
\bid{doi={10.1007/s00029-010-0034-y}, issn={1022-1824}, mr={2734330}}
\end{barticle}
\bptok{imsref}%
\endbibitem

\bibitem{BS10}
\begin{barticle}[mr]
\bauthor{\bsnm{Borodin},~\bfnm{Alexei}\binits{A.}} \AND
\bauthor{\bsnm{Shlosman},~\bfnm{Senya}\binits{S.}}
(\byear{2010}).
\btitle{Gibbs ensembles of nonintersecting paths}.
\bjournal{Comm. Math. Phys.}
\bvolume{293}
\bpages{145--170}.
\bid{doi={10.1007/s00220-009-0906-1}, issn={0010-3616}, mr={2563801}}
\end{barticle}
\bptok{imsref}%
\endbibitem

\bibitem{Bou07a}
\begin{barticle}[mr]
\bauthor{\bsnm{Boutillier},~\bfnm{C{\'e}dric}\binits{C.}}
(\byear{2007}).
\btitle{Pattern densities in non-frozen planar dimer models}.
\bjournal{Comm. Math. Phys.}
\bvolume{271}
\bpages{55--91}.
\bid{doi={10.1007/s00220-006-0175-1}, issn={0010-3616}, mr={2283954}}
\end{barticle}
\bptok{imsref}%
\endbibitem

\bibitem{Bro12}
\begin{barticle}[mr]
\bauthor{\bsnm{Broomhead},~\bfnm{Nathan}\binits{N.}}
(\byear{2012}).
\btitle{Dimer models and {C}alabi--{Y}au algebras}.
\bjournal{Mem. Amer. Math. Soc.}
\bvolume{215}
\bpages{viii+86}.
\bid{doi={10.1090/S0065-9266-2011-00617-9}, issn={0065-9266}, mr={2908565}}
\end{barticle}
\bptok{imsref}%
\endbibitem

\bibitem{CEP96}
\begin{barticle}[mr]
\bauthor{\bsnm{Cohn},~\bfnm{Henry}\binits{H.}},
\bauthor{\bsnm{Elkies},~\bfnm{Noam}\binits{N.}} \AND
\bauthor{\bsnm{Propp},~\bfnm{James}\binits{J.}}
(\byear{1996}).
\btitle{Local statistics for random domino tilings of the {A}ztec diamond}.
\bjournal{Duke Math. J.}
\bvolume{85}
\bpages{117--166}.
\bid{doi={10.1215/S0012-7094-96-08506-3}, issn={0012-7094}, mr={1412441}}
\end{barticle}
\bptok{imsref}%
\endbibitem

\bibitem{CKP01}
\begin{barticle}[mr]
\bauthor{\bsnm{Cohn},~\bfnm{Henry}\binits{H.}},
\bauthor{\bsnm{Kenyon},~\bfnm{Richard}\binits{R.}} \AND
\bauthor{\bsnm{Propp},~\bfnm{James}\binits{J.}}
(\byear{2001}).
\btitle{A variational principle for domino tilings}.
\bjournal{J.~Amer. Math. Soc.}
\bvolume{14}
\bpages{297--346 (electronic)}.
\bid{doi={10.1090/S0894-0347-00-00355-6}, issn={0894-0347}, mr={1815214}}
\end{barticle}
\bptok{imsref}%
\endbibitem

\bibitem{CLP98}
\begin{barticle}[mr]
\bauthor{\bsnm{Cohn},~\bfnm{Henry}\binits{H.}},
\bauthor{\bsnm{Larsen},~\bfnm{Michael}\binits{M.}} \AND
\bauthor{\bsnm{Propp},~\bfnm{James}\binits{J.}}
(\byear{1998}).
\btitle{The shape of a typical boxed plane partition}.
\bjournal{New York J. Math.}
\bvolume{4}
\bpages{137--165 (electronic)}.
\bid{issn={1076-9803}, mr={1641839}}
\end{barticle}
\bptok{imsref}%
\endbibitem

\bibitem{DaleyVereJonesII}
\begin{bbook}[mr]
\bauthor{\bsnm{Daley},~\bfnm{D.~J.}\binits{D.~J.}} \AND
\bauthor{\bsnm{Vere-Jones},~\bfnm{D.}\binits{D.}}
(\byear{2008}).
\btitle{An Introduction to the Theory of Point Processes: General Theory and Structure},
\bedition{2nd} ed.
\bseries{Probability and Its Applications (New York)}
\bvolume{2}.
\bpublisher{Springer},
\blocation{New York}.
\bid{doi={10.1007/978-0-387-49835-5}, mr={2371524}}
\end{bbook}
\bptok{imsref}%
\endbibitem

\bibitem{deTil07}
\begin{barticle}[auto:STB|2014/02/12|14:17:21]
\bauthor{\bparticle{de} \bsnm{Tiliere},~\bfnm{B.}\binits{B.}}
(\byear{2007}).
\btitle{Conformal invariance of isoradial dimer models \& the case of triangular quadri-tilings}.
\bjournal{Ann. Inst. H. Poincar\'e Sect. (B)}
\bvolume{43}
\bpages{729--750}.
\end{barticle}
\bptok{imsref}%
\endbibitem

\bibitem{Dub11}
\begin{bmisc}[auto]
\bauthor{\bsnm{Dub{\'e}dat},~\bfnm{Julien}\binits{J.}}
(\byear{2011}).
\bhowpublished{Dimers and analytic torsion 1.
Available at \arxivurl{arXiv:1110.2808}.}
\end{bmisc}
\bptok{imsref}%
\endbibitem

\bibitem{Dui11}
\begin{barticle}[mr]
\bauthor{\bsnm{Duits},~\bfnm{Maurice}\binits{M.}}
(\byear{2013}).
\btitle{Gaussian free field in an interlacing particle system with two jump rates}.
\bjournal{Comm. Pure Appl. Math.}
\bvolume{66}
\bpages{600--643}.
\bid{doi={10.1002/cpa.21419}, issn={0010-3640}, mr={3020314}}
\end{barticle}
\bptok{imsref}%
\endbibitem

\bibitem{EKLP92}
\begin{barticle}[mr]
\bauthor{\bsnm{Elkies},~\bfnm{Noam}\binits{N.}},
\bauthor{\bsnm{Kuperberg},~\bfnm{Greg}\binits{G.}},
\bauthor{\bsnm{Larsen},~\bfnm{Michael}\binits{M.}} \AND
\bauthor{\bsnm{Propp},~\bfnm{James}\binits{J.}}
(\byear{1992}).
\btitle{Alternating-sign matrices and domino tilings. {I}}.
\bjournal{J. Algebraic Combin.}
\bvolume{1}
\bpages{111--132}.
\bid{doi={10.1023/A:1022420103267}, issn={0925-9899}, mr={1226347}}
\end{barticle}
\bptok{imsref}%
\endbibitem

\bibitem{EKLP92a}
\begin{barticle}[mr]
\bauthor{\bsnm{Elkies},~\bfnm{Noam}\binits{N.}},
\bauthor{\bsnm{Kuperberg},~\bfnm{Greg}\binits{G.}},
\bauthor{\bsnm{Larsen},~\bfnm{Michael}\binits{M.}} \AND
\bauthor{\bsnm{Propp},~\bfnm{James}\binits{J.}}
(\byear{1992}).
\btitle{Alternating-sign matrices and domino tilings. {II}}.
\bjournal{J. Algebraic Combin.}
\bvolume{1}
\bpages{219--234}.
\bid{doi={10.1023/A:1022483817303}, issn={0925-9899}, mr={1194076}}
\end{barticle}
\bptok{imsref}%
\endbibitem

\bibitem{FF11}
\begin{barticle}[mr]
\bauthor{\bsnm{Fleming},~\bfnm{Benjamin~J.}\binits{B.~J.}} \AND
\bauthor{\bsnm{Forrester},~\bfnm{Peter~J.}\binits{P.~J.}}
(\byear{2011}).
\btitle{Interlaced particle systems and tilings of the {A}ztec diamond}.
\bjournal{J. Stat. Phys.}
\bvolume{142}
\bpages{441--459}.
\bid{doi={10.1007/s10955-011-0121-2}, issn={0022-4715}, mr={2771040}}
\end{barticle}
\bptok{imsref}%
\endbibitem

\bibitem{GGK00}
\begin{bbook}[mr]
\bauthor{\bsnm{Gohberg},~\bfnm{Israel}\binits{I.}},
\bauthor{\bsnm{Goldberg},~\bfnm{Seymour}\binits{S.}} \AND
\bauthor{\bsnm{Krupnik},~\bfnm{Nahum}\binits{N.}}
(\byear{2000}).
\btitle{Traces and Determinants of Linear Operators}.
\bseries{Operator Theory: Advances and Applications}
\bvolume{116}.
\bpublisher{Birkh\"auser},
\blocation{Basel}.
\bid{doi={10.1007/978-3-0348-8401-3}, mr={1744872}}
\end{bbook}
\bptok{imsref}%
\endbibitem

\bibitem{Hel00}
\begin{bmisc}[auto:STB|2014/02/12|14:17:21]
\bauthor{\bsnm{Helfgott},~\bfnm{Harold}\binits{H.}}
(\byear{2000}).
\bhowpublished{Edge effects on local statistics in lattice dimers:
A study of the Aztec diamond (finite case).
Preprint. Available at \arxivurl{arXiv:math/0007136}.}
\end{bmisc}
\bptok{imsref}%
\endbibitem

\bibitem{JPS98}
\begin{bmisc}[auto:STB|2014/02/12|14:17:21]
\bauthor{\bsnm{Jockusch},~\bfnm{W.}\binits{W.}},
\bauthor{\bsnm{Propp},~\bfnm{J.}\binits{J.}} \AND
\bauthor{\bsnm{Shor},~\bfnm{P.}\binits{P.}}
(\byear{1998}).
\bhowpublished{Random domino tilings and the Arctic circle theorem.
Preprint. Available at \arxivurl{arXiv:math/9801068}.}
\end{bmisc}
\bptok{imsref}%
\endbibitem

\bibitem{Joh02}
\begin{barticle}[mr]
\bauthor{\bsnm{Johansson},~\bfnm{Kurt}\binits{K.}}
(\byear{2002}).
\btitle{Non-intersecting paths, random tilings and random matrices}.
\bjournal{Probab. Theory Related Fields}
\bvolume{123}
\bpages{225--280}.
\bid{doi={10.1007/s004400100187}, issn={0178-8051}, mr={1900323}}
\end{barticle}
\bptok{imsref}%
\endbibitem

\bibitem{Joh05}
\begin{barticle}[mr]
\bauthor{\bsnm{Johansson},~\bfnm{Kurt}\binits{K.}}
(\byear{2005}).
\btitle{The arctic circle boundary and the {A}iry process}.
\bjournal{Ann. Probab.}
\bvolume{33}
\bpages{1--30}.
\bid{doi={10.1214/009117904000000937}, issn={0091-1798}, mr={2118857}}
\end{barticle}
\bptok{imsref}%
\endbibitem

\bibitem{Joh06}
\begin{bincollection}[mr]
\bauthor{\bsnm{Johansson},~\bfnm{Kurt}\binits{K.}}
(\byear{2006}).
\btitle{Random matrices and determinantal processes}.
In \bbooktitle{Mathematical Statistical Physics}
\bpages{1--55}.
\bpublisher{Elsevier},
\blocation{Amsterdam}.
\bid{doi={10.1016/S0924-8099(06)80038-7}, mr={2581882}}
\end{bincollection}
\bptok{imsref}%
\endbibitem

\bibitem{JN06}
\begin{barticle}[mr]
\bauthor{\bsnm{Johansson},~\bfnm{Kurt}\binits{K.}} \AND
\bauthor{\bsnm{Nordenstam},~\bfnm{Eric}\binits{E.}}
(\byear{2006}).
\btitle{Eigenvalues of {GUE} minors}.
\bjournal{Electron. J. Probab.}
\bvolume{11}
\bpages{1342--1371}.
\bid{doi={10.1214/EJP.v11-370}, issn={1083-6489}, mr={2268547}}
\end{barticle}
\bptok{imsref}%
\endbibitem

\bibitem{Kas61}
\begin{barticle}[auto:STB|2014/02/12|14:17:21]
\bauthor{\bsnm{Kasteleyn},~\bfnm{P.~W.}\binits{P.~W.}}
(\byear{1961}).
\btitle{The statistics of dimers on a lattice: I. The number of dimer arrangements on a quadratic lattice}.
\bjournal{Physica}
\bvolume{27}
\bpages{1209--1225}.
\end{barticle}
\bptok{imsref}%
\endbibitem

\bibitem{Kas63}
\begin{barticle}[mr]
\bauthor{\bsnm{Kasteleyn},~\bfnm{P.~W.}\binits{P.~W.}}
(\byear{1963}).
\btitle{Dimer statistics and phase transitions}.
\bjournal{J. Math. Phys.}
\bvolume{4}
\bpages{287--293}.
\bid{issn={0022-2488}, mr={0153427}}
\end{barticle}
\bptok{imsref}%
\endbibitem

\bibitem{Ken97}
\begin{barticle}[mr]
\bauthor{\bsnm{Kenyon},~\bfnm{Richard}\binits{R.}}
(\byear{1997}).
\btitle{Local statistics of lattice dimers}.
\bjournal{Ann. Inst. Henri Poincar\'e Probab. Stat.}
\bvolume{33}
\bpages{591--618}.
\bid{doi={10.1016/S0246-0203(97)80106-9}, issn={0246-0203}, mr={1473567}}
\end{barticle}
\bptok{imsref}%
\endbibitem

\bibitem{Ken01}
\begin{barticle}[mr]
\bauthor{\bsnm{Kenyon},~\bfnm{Richard}\binits{R.}}
(\byear{2000}).
\btitle{Conformal invariance of domino tiling}.
\bjournal{Ann. Probab.}
\bvolume{28}
\bpages{759--795}.
\bid{doi={10.1214/aop/1019160260}, issn={0091-1798}, mr={1782431}}
\end{barticle}
\bptok{imsref}%
\endbibitem

\bibitem{Ken00}
\begin{barticle}[mr]
\bauthor{\bsnm{Kenyon},~\bfnm{Richard}\binits{R.}}
(\byear{2001}).
\btitle{Dominos and the {G}aussian free field}.
\bjournal{Ann. Probab.}
\bvolume{29}
\bpages{1128--1137}.
\bid{doi={10.1214/aop/1015345599}, issn={0091-1798}, mr={1872739}}
\end{barticle}
\bptok{imsref}%
\endbibitem

\bibitem{Ken08b}
\begin{barticle}[mr]
\bauthor{\bsnm{Kenyon},~\bfnm{Richard}\binits{R.}}
(\byear{2008}).
\btitle{Height fluctuations in the honeycomb dimer model}.
\bjournal{Comm. Math. Phys.}
\bvolume{281}
\bpages{675--709}.
\bid{doi={10.1007/s00220-008-0511-8}, issn={0010-3616}, mr={2415464}}
\end{barticle}
\bptok{imsref}%
\endbibitem

\bibitem{Ken09}
\begin{bincollection}[mr]
\bauthor{\bsnm{Kenyon},~\bfnm{Richard}\binits{R.}}
(\byear{2009}).
\btitle{Lectures on dimers}.
In \bbooktitle{Statistical Mechanics}.
\bseries{IAS/Park City Math. Ser.}
\bvolume{16}
\bpages{191--230}.
\bpublisher{Amer. Math. Soc.},
\blocation{Providence, RI}.
\bid{mr={2523460}}
\end{bincollection}
\bptok{imsref}%
\endbibitem

\bibitem{KO07}
\begin{barticle}[mr]
\bauthor{\bsnm{Kenyon},~\bfnm{Richard}\binits{R.}} \AND
\bauthor{\bsnm{Okounkov},~\bfnm{Andrei}\binits{A.}}
(\byear{2007}).
\btitle{Limit shapes and the complex {B}urgers equation}.
\bjournal{Acta Math.}
\bvolume{199}
\bpages{263--302}.
\bid{doi={10.1007/s11511-007-0021-0}, issn={0001-5962}, mr={2358053}}
\end{barticle}
\bptok{imsref}%
\endbibitem

\bibitem{KOS06}
\begin{barticle}[mr]
\bauthor{\bsnm{Kenyon},~\bfnm{Richard}\binits{R.}},
\bauthor{\bsnm{Okounkov},~\bfnm{Andrei}\binits{A.}} \AND
\bauthor{\bsnm{Sheffield},~\bfnm{Scott}\binits{S.}}
(\byear{2006}).
\btitle{Dimers and amoebae}.
\bjournal{Ann. of Math. (2)}
\bvolume{163}
\bpages{1019--1056}.
\bid{doi={10.4007/annals.2006.163.1019}, issn={0003-486X}, mr={2215138}}
\end{barticle}
\bptok{imsref}%
\endbibitem

\bibitem{Lev89}
\begin{barticle}[auto:STB|2014/02/12|14:17:21]
\bauthor{\bsnm{Levitov},~\bfnm{L.~S.}\binits{L.~S.}}
(\byear{1989}).
\btitle{The rvb model as the problem of the surface of a quantum crystal}.
\bjournal{JETP Lett.}
\bvolume{50}
\bpages{469--472}.
\end{barticle}
\bptok{imsref}%
\endbibitem

\bibitem{Lev90}
\begin{barticle}[mr]
\bauthor{\bsnm{Levitov},~\bfnm{L.~S.}\binits{L.~S.}}
(\byear{1990}).
\btitle{Equivalence of the dimer resonating-valence-bond problem to the quantum roughening problem}.
\bjournal{Phys. Rev. Lett.}
\bvolume{64}
\bpages{92--94}.
\bid{doi={10.1103/PhysRevLett.64.92}, issn={0031-9007}, mr={1031942}}
\end{barticle}
\bptok{imsref}%
\endbibitem

\bibitem{LRS01}
\begin{barticle}[mr]
\bauthor{\bsnm{Luby},~\bfnm{Michael}\binits{M.}},
\bauthor{\bsnm{Randall},~\bfnm{Dana}\binits{D.}} \AND
\bauthor{\bsnm{Sinclair},~\bfnm{Alistair}\binits{A.}}
(\byear{2001}).
\btitle{Markov chain algorithms for planar lattice structures}.
\bjournal{SIAM J. Comput.}
\bvolume{31}
\bpages{167--192}.
\bid{doi={10.1137/S0097539799360355}, issn={0097-5397}, mr={1857394}}
\end{barticle}
\bptok{imsref}%
\endbibitem

\bibitem{OR03}
\begin{barticle}[mr]
\bauthor{\bsnm{Okounkov},~\bfnm{Andrei}\binits{A.}} \AND
\bauthor{\bsnm{Reshetikhin},~\bfnm{Nikolai}\binits{N.}}
(\byear{2003}).
\btitle{Correlation function of {S}chur process with application to local geometry of a random 3-dimensional {Y}oung diagram}.
\bjournal{J. Amer. Math. Soc.}
\bvolume{16}
\bpages{581--603 (electronic)}.
\bid{doi={10.1090/S0894-0347-03-00425-9}, issn={0894-0347}, mr={1969205}}
\end{barticle}
\bptok{imsref}%
\endbibitem

\bibitem{Pet12a}
\begin{bmisc}[auto:STB|2014/02/12|14:17:21]
\bauthor{\bsnm{Petrov},~\bfnm{Leonid}\binits{L.}}
(\byear{2012}).
\bhowpublished{Asymptotics of random lozenge tilings via Gelfand--Tsetlin schemes.
Available at \arxivurl{arXiv:1202.3901}.}
\end{bmisc}
\bptok{imsref}%
\endbibitem

\bibitem{Pet12b}
\begin{bmisc}[auto:STB|2014/02/12|14:17:21]
\bauthor{\bsnm{Petrov},~\bfnm{Leonid}\binits{L.}}
(\byear{2012}).
\bhowpublished{Asymptotics of uniformly random lozenge tilings of polygons.
Gaussian free field. Available at \arxivurl{arXiv:1206.5123}.}
\end{bmisc}
\bptok{imsref}%
\endbibitem

\bibitem{Rom12}
\begin{barticle}[mr]
\bauthor{\bsnm{Romik},~\bfnm{Dan}\binits{D.}}
(\byear{2012}).
\btitle{Arctic circles, domino tilings and square {Y}oung tableaux}.
\bjournal{Ann. Probab.}
\bvolume{40}
\bpages{611--647}.
\bid{doi={10.1214/10-AOP628}, issn={0091-1798}, mr={2952086}}
\end{barticle}
\bptok{imsref}%
\endbibitem

\bibitem{SOS00}
\begin{barticle}[mr]
\bauthor{\bsnm{Soshnikov},~\bfnm{A.}\binits{A.}}
(\byear{2000}).
\btitle{Determinantal random point fields}.
\bjournal{Uspekhi Mat. Nauk}
\bvolume{55}
\bpages{107--160}.
\bid{doi={10.1070/rm2000v055n05ABEH000321}, issn={0042-1316}, mr={1799012}}
\end{barticle}
\bptok{imsref}%
\endbibitem

\bibitem{TF61}
\begin{barticle}[mr]
\bauthor{\bsnm{Temperley},~\bfnm{H.~N.~V.}\binits{H.~N.~V.}} \AND
\bauthor{\bsnm{Fisher},~\bfnm{Michael~E.}\binits{M.~E.}}
(\byear{1961}).
\btitle{Dimer problem in statistical mechanics---an exact result}.
\bjournal{Philos. Mag. (8)}
\bvolume{6}
\bpages{1061--1063}.
\bid{mr={0136398}}
\end{barticle}
\bptok{imsref}%
\endbibitem

\bibitem{Thu90}
\begin{barticle}[mr]
\bauthor{\bsnm{Thurston},~\bfnm{William~P.}\binits{W.~P.}}
(\byear{1990}).
\btitle{Conway's tiling groups}.
\bjournal{Amer. Math. Monthly}
\bvolume{97}
\bpages{757--773}.
\bid{doi={10.2307/2324578}, issn={0002-9890}, mr={1072815}}
\end{barticle}
\bptok{imsref}%
\endbibitem

\end{thebibliography}
\end{document}